\documentclass[a4paper]{article}

\usepackage[left=2.7cm,right=2.7cm,top=3.2cm,bottom=3.2cm]{geometry}

\usepackage{amsmath,amsthm,amssymb}

\usepackage{dsfont}
\usepackage{extarrows}
\usepackage{enumitem}
\usepackage{authblk}
\usepackage{mathtools}
\usepackage{comment}

\usepackage[numbers,square,sort&compress]{natbib}

\usepackage[utf8]{inputenc}
\usepackage[colorlinks=true,urlcolor=blue,
citecolor=blue,linkcolor=blue,linktocpage,pdfpagelabels,
bookmarksnumbered,bookmarksopen]{hyperref}

\let\svthefootnote\thefootnote
\newcommand\freefootnote[1]{%
  \let\thefootnote\relax%
  \footnotetext{\noindent #1}%
  \let\thefootnote\svthefootnote%
}

\usepackage{graphicx}
\graphicspath{{fig/}}

\numberwithin{equation}{section}
\mathtoolsset{showonlyrefs}

\usepackage{algorithm, algpseudocode}

\usepackage{mathrsfs}

\usepackage{lipsum}
 \usepackage{xfrac}

\theoremstyle{plain}
\newtheorem{theorem}{Theorem}[section]
\newtheorem{lemma}[theorem]{Lemma}
\newtheorem{corollary}[theorem]{Corollary}
\newtheorem{proposition}[theorem]{Proposition}
\newtheorem{remark}{Remark}[section]

\usepackage{xcolor}

\title{Sharp Brezis--Merle Inequalities for Polyharmonic Dirichlet Problems: Extremal Measures and Critical Asymptotics}

\author[1]{Jo\~ao Marcos do \'O}
\author[2]{Federica Sani}
\author[3]{Victor Rafael Silva}

\affil[1]{\small{Federal University of Para\'{\i}ba, Brazil\\
  \texttt{jmbo@mat.ufpb.br}}}
\affil[2]{Universit\`{a} di Modena e Reggio Emilia, Italy\\
  \texttt{federica.sani@unimore.it}}
\affil[3]{Federal University of Para\'{\i}ba, Brazil\\
  \texttt{victor.rafael@academico.ufpb.br
}}

\date{}

\begin{document}

\maketitle


\begin{abstract}
We establish the Brezis--Merle inequality for very weak solutions of 
polyharmonic Dirichlet problems with integrable data on smooth bounded domains 
in conformal dimensions. The inequality is sharp both with respect to the 
admissible range of the exponent and to the quantity appearing on the 
right-hand side, which is characterized as the maximum of an associated 
one-pole Green functional. The supremum defining this quantity is not attained 
by any integrable datum, but remains unchanged when the admissible class is 
extended to Radon probability measures; in this enlarged class, the extremals 
are Dirac masses, and their concentration points are characterized
in terms of the associated Green function. On the unit ball, we prove uniqueness of the 
extremal measure and obtain an explicit optimal bound extending the sharp 
result for the Laplacian. Finally, we determine the critical asymptotics of 
the sharp constant on the right-hand side and show that the possible limiting 
concentration points are selected by the maximum points of the polyharmonic 
Robin function.
\end{abstract}
 \freefootnote{\textbf{Keywords:} Brezis--Merle inequality, polyharmonic operator, exponential integrability,
extremal measures, Green functions, Robin functions.}

\freefootnote{\textbf{2020 Mathematics Subject Classification: }35A23, 35B33, 35J40, 35J08, 31B30, 35J91.
}

    \tableofcontents


\section{Introduction}

The central aim of this work is to establish the optimal form of the
Brezis--Merle inequality for polyharmonic Dirichlet problems, with
$L^{1}$-data in conformal dimensions, of the form
\begin{equation}
\tag{$P_{\Omega}$}
\label{POMEGA}
\begin{cases}
(-\Delta)^m u=f(x)
    & \text{in }\Omega,\\
\partial_\nu^j u=0
    & \text{on }\partial\Omega,
      \qquad j=0,\ldots,m-1,
\end{cases}
\end{equation}
where $\Omega\subset\mathds{R}^{2m}$ $(m \geq 1)$ is a smooth bounded
domain, $f\in L^1(\Omega)$, and $\nu$ denotes the outer unit normal to $\partial\Omega$.

A function $u\in L^1(\Omega)$ is called a very weak solution of
\eqref{POMEGA} if
\begin{equation}
\label{very weak solution2}
\int_\Omega u(-\Delta)^m\varphi\,\mathrm{d}x
=
\int_\Omega f\varphi\,\mathrm{d}x
\qquad
\text{for every }
\varphi\in C_{\mathrm{cl}}^{2m}(\overline{\Omega}),
\end{equation}
where $C_{\mathrm{cl}}^{2m}(\overline{\Omega})$ is the Dirichlet test class associated with \eqref{POMEGA}, precisely,
\begin{equation}
\label{test space}
C_{\mathrm{cl}}^{2m}(\overline{\Omega})
:=
\left\{
\varphi\in C^{2m}(\overline{\Omega}) :
\partial_\nu^j\varphi=0
\text{ on }\partial\Omega,\quad
j=0,\ldots,m-1
\right\}.
\end{equation}

The main result is the following.
\begin{theorem}\label{main sharp BM}
Let $\Omega\subset\mathds{R}^{2m}$ ($m \geq 1$) be a smooth bounded
domain. If $f\in L^1(\Omega)\setminus\{0\}$ and $u$ is a  very weak solution of
\eqref{POMEGA} then
\begin{equation}\label{sharp BM}
\int_{\Omega}
\exp\left(
\frac{\alpha|u(x)|}{\lVert f\rVert_{L^1(\Omega)}}
\right)\,\mathrm{d}x
\leq
\max_{y\in\Omega}
\int_{\Omega}
e^{\alpha|G_{\Omega}(x,y)|}\,\mathrm{d}x,
\end{equation}
for all $\alpha\in(0,\Lambda_m)$, where
$$    \Lambda_m:=
    2^{2m}\pi^m m!
$$
and $G_{\Omega}$ is the Green function  of $(-\Delta)^m$ in $\Omega$ under
homogeneous Dirichlet boundary conditions.

The range of the exponent $\alpha\in(0,\Lambda_m)$ is sharp: if
$\alpha\geq\Lambda_m$, then, for every $C>0$, there exists
$f\in L^1(\Omega)\setminus\{0\}$ such that the corresponding very weak
solution $u$ satisfies
\[
\int_{\Omega}
\exp\left(
\frac{\alpha|u(x)|}{\lVert f\rVert_{L^1(\Omega)}}
\right)\,\mathrm{d}x
>
C.
\]
Moreover, for each fixed $\alpha\in(0,\Lambda_m)$, the constant on the right-hand side
of \eqref{sharp BM} is optimal: the constant $\max_{y\in\Omega}
\int_{\Omega}
e^{\alpha|G_{\Omega}(x,y)|}\,\mathrm{d}x$ is well defined and is the smallest constant, independent of $f$, for
which \eqref{sharp BM} holds for every nonzero datum
$f\in L^1(\Omega)$.
\end{theorem}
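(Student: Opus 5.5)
Write $u(x)=\int_\Omega G_\Omega(x,y)f(y)\,\mathrm{d}y$. We first check that the very weak solution is unique, so $u$ must be this Green representation. If $u$ is a very weak solution with $f=0$, then $\int u\,(-\Delta)^m\varphi=0$ for all $\varphi\in C^{2m}_{\mathrm{cl}}(\overline\Omega)$. By elliptic regularity for the Dirichlet problem, every $\psi\in C_c^\infty(\Omega)$ equals $(-\Delta)^m\varphi$ for some such $\varphi$, hence $u=0$. Moreover $G_\Omega(x,\cdot)f\in L^1$ for a.e.\ $x$ by Fubini, since $G_\Omega(\cdot,y)$ is bounded in $L^1$ uniformly in $y$.

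\textbf{The inequality.} Set $\mu=|f|\,\mathrm{d}y/\|f\|_{L^1}$, a probability measure. Then
\[
\frac{\alpha|u(x)|}{\|f\|_{L^1}}\le\int_\Omega \alpha|G_\Omega(x,y)|\,\mathrm{d}\mu(y).
\]
Jensen's inequality (convexity of $\exp$) gives
\[
\exp\!\Big(\frac{\alpha|u|}{\|f\|_{L^1}}\Big)\le\int_\Omega e^{\alpha|G_\Omega(x,y)|}\,\mathrm{d}\mu(y).
\]
Integrating in $x$ and applying Tonelli,
\[
\int_\Omega \exp\!\Big(\frac{\alpha|u|}{\|f\|_{L^1}}\Big)\,\mathrm{d}x\le\int_\Omega F_\alpha(y)\,\mathrm{d}\mu(y)\le\sup_{\Omega}F_\alpha,
\qquad F_\alpha(y):=\int_\Omega e^{\alpha|G_\Omega(x,y)|}\,\mathrm{d}x .
\]

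\textbf{The supremum is a finite maximum.} We use the standard expansion
\[
G_\Omega(x,y)=\frac{1}{\gamma_m}\log\frac1{|x-y|}+H(x,y),\qquad \gamma_m=\Lambda_m/(2m),
\]
with $H$ bounded above, together with the global estimate
\[
|G_\Omega(x,y)|\le \frac1{\gamma_m}\log\Big(1+\frac{d(y)}{|x-y|}\Big)+C
\]
(Krasovski\u{\i}, Grunau--Sweers / Dall'Acqua--Sweers). These give
\[
e^{\alpha|G|}\le C|x-y|^{-2m\alpha/\Lambda_m},
\]
and since $2m\alpha/\Lambda_m<2m$ this is uniformly integrable. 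Hence $F_\alpha$ is finite and continuous on $\Omega$ by dominated convergence and Vitali.

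As $y\to\partial\Omega$, $G_\Omega(\cdot,y)\to0$ pointwise. The same uniform integrability then gives $F_\alpha(y)\to|\Omega|$. We also have $F_\alpha(y)>|\Omega|$ for interior $y$, because $G\not\equiv0$. So $F_\alpha$ extends continuously to $\overline\Omega$ with boundary value $|\Omega|$ and attains its supremum at an interior point. Obtaining this boundary behaviour uniformly is the main technical obstacle; it rests on the sharp Green function bounds for $(-\Delta)^m$ with Dirichlet conditions, which I would cite rather than reprove.

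\textbf{Optimality of the constant.} Let $y_0$ be a maximizer of $F_\alpha$ and take $f_n=|B_{1/n}|^{-1}\chi_{B_{1/n}(y_0)}$. Then $u_n=\int G(\cdot,y)f_n\to G(\cdot,y_0)$ pointwise off $y_0$, by continuity of $G$ away from the diagonal. Fatou's lemma gives
\[
\liminf_n\int_\Omega e^{\alpha|u_n|}\,\mathrm{d}x\ge F_\alpha(y_0).
\]
So no smaller constant works.

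\textbf{Sharpness of the range.} For $\alpha\ge\Lambda_m$, use the same $f_n$ centred at some $y_0\in\Omega$. For $|x-y_0|<r$ with $r$ small, $G(x,y)\ge\frac1{\gamma_m}\log\frac1{|x-y|}-C$ for $y\in B_{1/n}(y_0)$. Hence
\[
u_n(x)\ge \frac1{\gamma_m}\log\frac{1}{|x-y_0|+1/n}-C .
\]
Therefore
\[
\int e^{\alpha u_n}\ge c\int_{B_r}(|x-y_0|+1/n)^{-2m\alpha/\Lambda_m}\,\mathrm{d}x,
\]
which tends to $\int_{B_r}|x-y_0|^{-2m}\,\mathrm{d}x=\infty$ when $\alpha\ge\Lambda_m$ (for $\alpha=\Lambda_m$ it grows like $\log n$). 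Choosing $n$ large exceeds any given $C$.
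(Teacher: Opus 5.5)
Your overall structure follows the paper: uniqueness so that $u=G_\Omega\star f$, the Jensen--Tonelli reduction to the one-pole profile $F_\alpha=J_{\Omega,\alpha}$, continuity of $F_\alpha$ with boundary limit $|\Omega|$ and $F_\alpha>|\Omega|$ to get an interior maximum, and concentrating data with Fatou for optimality of the constant and of the range. The gap is the single estimate on which the whole first part rests,
\[
|G_\Omega(x,y)|\le c_m\log\Big(1+\frac{d(y)}{|x-y|}\Big)+C,\qquad c_m=\frac{1}{\gamma_m}=\frac{2m}{\Lambda_m},
\]
with the \emph{exact} coefficient $c_m$ and $C$ independent of the pole. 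The works you cite (Krasovski\u{\i}, Grunau--Sweers, Dall'Acqua--Sweers, and also Mayboroda--Maz'ya, which the paper uses) do not give this. Their bounds have the form $|G_\Omega(x,y)|\le C_\Omega\log(1+\min\{d(x),d(y)\}/|x-y|)$, with an unspecified multiplicative constant $C_\Omega$. Such a bound only gives $\sup_y F_\alpha(y)<\infty$ for $\alpha C_\Omega<2m$, which in general is a strictly smaller range than $(0,\Lambda_m)$.

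Likewise, ``$H$ bounded above'' is not a standard fact for $m\ge2$. For $m=1$ it comes from the maximum principle, which the polyharmonic Dirichlet problem lacks. The regular part genuinely blows up as $y\to\partial\Omega$ (indeed $R_\Omega(y)\to-\infty$), so you must show the blow-up is exactly compensated at scale $d(y)$. The known bound $|H_{\Omega,y}(x)|\le C\log(1+D/d(y))$ does not suffice: integrated over $B(y,d(y)/4)$ it gives something of order $d(y)^{2m-\alpha(c_m+C)}$, which is unbounded as $d(y)\to0$ for $\alpha$ close to $\Lambda_m$. Your continuity, boundary limit and attainment of the maximum all use the same uniform integrability, so they are unsupported on the full range too.

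The paper closes this gap in Lemma~\ref{lem:uniform-green-estimates}. For each $y$, it compares $G_\Omega(\cdot,y)$ with Boggio's explicit Green function of $B(y,d(y)/2)$. The difference $W_y$ is polyharmonic in that ball and bounded on the annulus $d(y)/4\le|x-y|<d(y)/2$ uniformly in $y$: there the crude bound gives $|G_\Omega|\le C_\Omega\log 5$, and Boggio's formula is bounded by a constant depending only on $m$. The interior-to-annulus estimate of Lemma~\ref{interior estimate BMOMEGA}, which replaces the maximum principle, then propagates this bound into $B(y,d(y)/4)$ and yields \eqref{Green omega to ball}. With that estimate in hand, your Vitali argument for continuity and the boundary limit is fine, and a bit more streamlined than the paper's.

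Two minor points. First, you should also verify that $G_\Omega\star f$ satisfies the very weak formulation. This needs $\varphi(x)=\int_\Omega G_\Omega(x,y)(-\Delta)^m\varphi(y)\,\mathrm{d}y$ for $\varphi\in C^{2m}_{\mathrm{cl}}(\overline\Omega)$, plus Fubini. Second, in the range-sharpness step, your lower bound for $G_\Omega(x,y)$ uniform in $y$ near $y_0$ needs local boundedness of the regular part in both variables. You can avoid this, as the paper does, by applying Fatou to $u_n\to G_\Omega(\cdot,y_0)$ together with $F_\alpha(y_0)=+\infty$ for $\alpha\ge\Lambda_m$.
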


\begin{remark}
By saying that \emph{the constant $\max_{y\in\Omega}
\int_{\Omega}
e^{\alpha|G_{\Omega}(x,y)|}\,\mathrm{d}x$ is well defined}, we mean that the function $y \in \Omega \mapsto \int_{\Omega}
e^{\alpha|G_{\Omega}(x,y)|}\,\mathrm{d}x$ attains its maximum on $\Omega$. This will be proved in \hyperref[Jalpha sup attained]{Proposition \ref{Jalpha sup attained}}.
\end{remark}

The space of test functions  defined in \eqref{test space} is the natural choice to encode the boundary conditions of the Dirichlet polyharmonic problem \eqref{POMEGA}. Indeed, suppose that
\(u\) is smooth and satisfies \eqref{POMEGA}. Then, by the polyharmonic version of the integration-by-parts formula given in the classical work of Boggio \cite[p.~101, formula~(3)]{Boggioclassical}, the following identity holds, for every $\varphi \in C^{2m}(\overline{\Omega})$,
\begin{equation}
\label{eq:polyharmonic-green-identity}
\begin{aligned}
&\int_\Omega
\left[
u(-\Delta)^m\varphi
-
\varphi(-\Delta)^m u
\right]\,\mathrm{d}x
\\
&\qquad =
(-1)^m
\sum_{j=0}^{m-1}
\int_{\partial\Omega}
\left[
\Delta^j u\,
\partial_\nu\left(\Delta^{m-1-j}\varphi\right)
-
\partial_\nu\left(\Delta^j u\right)
\Delta^{m-1-j}\varphi
\right]\,\mathrm{d}S.
\end{aligned}
\end{equation}
It follows that if $\varphi \in C^{2m}(\overline{\Omega})$ satisfies the homogeneous
Dirichlet conditions of orders $0,\ldots,m-1$, namely if $\varphi \in C_{\mathrm{cl}}^{2m}(\overline{\Omega})$, the boundary integrals
in \eqref{eq:polyharmonic-green-identity} vanish. Consequently,
\[
\int_\Omega u(-\Delta)^m\varphi\,\mathrm{d}x
=
\int_\Omega \varphi f\,\mathrm{d}x \qquad \text{ for every }  \varphi \in C_{\mathrm{cl}}^{2m}(\overline{\Omega}).
\]
Thus, every classical solution of \eqref{POMEGA} satisfies
\eqref{very weak solution2}. Moreover, the latter identity
remains meaningful when $u,f\in L^{1}(\Omega)$ and
retains the boundary conditions of problem \eqref{POMEGA}.

\medskip

In dimension two, Brezis and Merle \cite{BM} established exponential
integrability for very weak solutions of the Dirichlet problem for the
Laplacian with $L^1$-data. Their estimate became a fundamental tool in
the study of Liouville-type equations, compactness, and blow-up
phenomena.

A functional-analytic formulation of this phenomenon was subsequently
developed by Cassani, Ruf, and Tarsi \cite{CassaniRufTarsi}. For the
$\Delta$-reduced Sobolev space in dimension two
\[
W_{\Delta}^{2,1}(\Omega)
:=
\left\{
u\in W_0^{1,1}(\Omega):
\Delta u\in L^1(\Omega)
\right\},
\]
they proved the sharp limiting embedding into the Zygmund space
$L_{\exp}(\Omega)$, which enabled them to deduce the validity of a Brezis--Merle-type inequality for more general integrals. Fontana and Morpurgo \cite{FontanaMorpurgo} sharpened  this analysis and, as a consequence, obtained the explicit
Brezis--Merle estimate
\begin{equation}\label{eq:fontana-morpurgo-bound}
\int_\Omega
\exp\left(
\frac{\alpha |u(x)|}{\|\Delta u\|_{L^1(\Omega)}}
\right)\,\mathrm{d}x
\leq
\frac{4\pi}{4\pi-\alpha}\,|\Omega|,
\qquad
0<\alpha<4\pi,
\end{equation}
for all $u \in W_{\Delta}^{2,1}(\Omega)$. A corresponding estimate, formulated for weak solutions of second-order elliptic equations with finite Radon measure data, had previously been obtained by Alberico and Ferone \cite[Theorem 3.1]{Alberico}. Fontana and Morpurgo later proved that the constant on the right-hand side of \eqref{eq:fontana-morpurgo-bound} cannot be decreased \emph{at least} when $\Omega$ is a ball (see \cite[Corollary 2]{FontanaMorpurgo}).

For higher-order operators, the conformal dimension is $2m$, where
the fundamental solution of $(-\Delta)^m$ in $\mathds{R}^{2m}$ has a
logarithmic singularity. In this setting, the choice of boundary conditions plays a crucial role. Under Navier conditions, $\Delta^j u=0$ on
$\partial\Omega$ for $j=0,\ldots,m-1$, the equation can be rewritten
as a system of second-order Dirichlet problems, so that positivity and
comparison arguments remain available. A Brezis--Merle-type estimate
for the biharmonic Navier problem in dimension four was established by
Lin \cite[Lemma~2.3]{Lin1998}, and its polyharmonic counterpart in any
conformal dimension  is due to Martinazzi \cite{MartinazziBMNavier}. No
analogous reduction is available under the homogeneous Dirichlet
conditions considered here, and the corresponding polyharmonic Green
function may change sign. In our previous work \cite{PreviousPaper}, we treated the biharmonic Dirichlet
problem in four-dimensional balls and, using Boggio's positive
Green function \cite{Boggioclassical}, proved the Brezis--Merle
inequality with sharp critical exponent $32\pi^2$, together with
applications to semilinear biharmonic equations and non-variational
cooperative systems.

The present paper develops this
Dirichlet theory for the  polyharmonic operator on general smooth
bounded domains. For arbitrary integer $m \geq 1$, we determine both the sharp
critical exponent and the best possible right-hand side, prove that
equality with this bound is not attained by any $L^1$-datum,
characterize all probability measures for which equality holds, and
analyze the behavior of the sharp constant on the right-hand side and of the extremal measures
as the exponent $\alpha$ approaches the critical threshold $\Lambda_m$. Our result is partially new even when $m=1$; indeed, we determine the best possible
right-hand side in the Brezis--Merle inequality on any
smooth bounded domain. In balls, it coincides with the sharp bound found by
Fontana and Morpurgo (see \eqref{eq:fontana-morpurgo-bound} and \cite[Corollary 2]{FontanaMorpurgo}). Our methods are different from those employed in
\cite{Alberico,CassaniRufTarsi,FontanaMorpurgo}: while these works rely
on symmetrization and decreasing-rearrangement techniques, with
\cite{CassaniRufTarsi,FontanaMorpurgo} being formulated in terms of
reduced Sobolev spaces and limiting embeddings, our arguments work
directly with very weak solutions and rely on Green representation,
pointwise Green-function estimates, concentration of $L^1$-data, and
potential theory. In particular, they remain applicable on general
domains even when the polyharmonic Green function changes sign.

\subsection{Main ideas of the proof and critical asymptotics}

Let $\Omega\subset\mathds{R}^{2m}$ ($m \geq 1$) be a smooth bounded
domain. The Green representation formula plays a central role throughout the
paper. More precisely, we prove in \hyperref[solution2]{Lemma \ref{solution2}} that, for
every $f\in L^1(\Omega)$, the unique very weak solution of
\eqref{POMEGA} is given by
\begin{equation} \label{eq:intro:GreenRep}
u(x)
=
(G_\Omega\star f)(x)
:=
\int_\Omega G_\Omega(x,y)f(y)\,\mathrm{d}y
\qquad\text{for a.e. }x\in\Omega.
\end{equation}
Following the strategy underlying the original argument of
Brezis and Merle \cite{BM}, we combine the above Green representation formula
with Jensen's inequality. Indeed, if
\[
\mathrm{d}\mu_f(y):=
\frac{|f(y)|}{\|f\|_{L^1(\Omega)}}\,\mathrm{d}y,
\]
then $\mu_f$ is a probability measure. Hence, combining \eqref{eq:intro:GreenRep} with the convexity of the exponential function, Jensen's inequality, and Tonelli's theorem, we have
\begin{align}
\int_\Omega
\exp\left(
\frac{\alpha|u(x)|}{\|f\|_{L^1(\Omega)}}
\right)\,\mathrm{d}x
&\leq
\int_\Omega
\exp\left(
\alpha\int_\Omega |G_\Omega(x,y)|\,d\mu_f(y)
\right)\,\mathrm{d}x
\notag
\leq
\int_\Omega\int_\Omega
e^{\alpha|G_\Omega(x,y)|}\,\mathrm{d}\mu_f(y)\,\mathrm{d}x
\notag\\
&=
\int_\Omega J_{\Omega,\alpha}(y)\,\mathrm{d}\mu_f(y)
\leq
\sup_{y\in\Omega}J_{\Omega,\alpha}(y),
\label{Jensen upper bound}
\end{align}
where
\[
J_{\Omega,\alpha}(y):=
\int_\Omega e^{\alpha|G_\Omega(x,y)|}\,\mathrm{d}x.
\]
Thus, the proof of the Brezis--Merle inequality is reduced to
obtaining a uniform estimate for the one-pole profile $J_{\Omega,\alpha}$. Indeed, we will show that for every $\alpha \in (0, \Lambda_m)$ there exists $C_{\Omega, \alpha}>0$ such that
\begin{equation} \label{eq:S(alpha)dfn}
S_\Omega(\alpha):=\sup_{y\in\Omega} J_{\Omega,\alpha}(y)\leq C_{\Omega, \alpha},
\end{equation}
see \hyperref[exp uniform G integrability]{Theorem \ref{exp uniform G integrability}}. The uniform estimate \eqref{eq:S(alpha)dfn}, combined with the preceding Jensen argument \eqref{Jensen upper bound}, shows that the left-hand side of \eqref{sharp BM} is uniformly
bounded over all nonzero $L^1$-data for every
$\alpha\in(0,\Lambda_m)$. By contrast,
\hyperref[sharpness theorem]{Theorem \ref{sharpness theorem}} proves
that no finite bound uniform over all nonzero $L^1$-data can hold
when $\alpha\geq\Lambda_m$. What remains is to determine the
best possible right-hand side in \eqref{sharp BM}. Obtaining this optimal constant, and hence
the fully sharp form \eqref{sharp BM} of the Brezis--Merle inequality stated in
\hyperref[main sharp BM]{Theorem \ref{main sharp BM}}, leads naturally
to the associated maximizing problem

\begin{equation}\label{L1 extremal value}
S_{\Omega,L^1}(\alpha)
:=
\sup_{\substack{f\in L^1(\Omega)\\
\|f\|_{L^1(\Omega)}=1}}
\int_\Omega
e^{\alpha|(G_\Omega\star f)(x)|}\,\mathrm{d}x,
\qquad
0<\alpha<\Lambda_m.
\end{equation}
Taking the supremum over all unit $L^1$-data in \eqref{Jensen upper bound},
we obtain
$$
S_{\Omega,L^1}(\alpha)\leq\sup_{y\in\Omega}J_{\Omega,\alpha}(y)=S_\Omega(\alpha).
$$
The reverse inequality will be obtained in \hyperref[prop:L1-value-and-nonattainment]{Proposition \ref{prop:L1-value-and-nonattainment}}, therefore
\begin{equation} \label{intro-optimal-value2}
S_{\Omega,L^1}(\alpha)=\sup_{y\in\Omega}J_{\Omega,\alpha}(y)=S_\Omega(\alpha).
\end{equation}

\hyperref[prop:L1-value-and-nonattainment]{Proposition \ref{prop:L1-value-and-nonattainment}} further shows that $S_{\Omega,L^1}(\alpha)$ is
\emph{not} attained. Nevertheless, the value $S_\Omega(\alpha)$ is approached by
normalized data concentrating at a point, whose associated measures
converge weakly-$*$ to a Dirac mass. This motivates extending the
admissible class to the space $\mathcal P(\Omega)$ of positive Radon probability measures and defining
\begin{equation}\label{SP def}
S_{\Omega,\mathcal P}(\alpha)
:=
\sup_{\mu\in\mathcal{P}(\Omega)}
\int_\Omega e^{\alpha|(G_\Omega\star\mu)(x)|}\,\mathrm{d}x, \quad \text{ where } \quad (G_\Omega\star\mu)(x)
    :=
    \int_\Omega G_\Omega(x,y)\,\mathrm{d}\mu(y).
\end{equation}
\hyperref[prop:equality-of-extremal-values]{Proposition \ref{prop:equality-of-extremal-values}} shows that this extension does not change the optimal value and
\begin{equation} \label{intro-optimal-value}
S_{\Omega,\mathcal P}(\alpha)
=
S_{\Omega,L^1}(\alpha)
=
S_\Omega(\alpha).
\end{equation}

In view of the definition \eqref{eq:S(alpha)dfn} of the value $S_\Omega(\alpha)$, the analysis of the maximizing problem $S_{\Omega,\mathcal P}(\alpha)$ is thus reduced to the qualitative study of
$J_{\Omega,\alpha}$. In \hyperref[section5]{Section \ref{section5}}, we prove that $J_{\Omega,\alpha}$ is continuous on $\Omega$,
determine its behavior as the pole approaches the boundary, and show
that $J_{\Omega,\alpha}$ attains its maximum at an interior point.

In particular, for every $\alpha \in (0, \Lambda_m)$,
\[
S_\Omega(\alpha)
=
\max_{y\in\Omega}J_{\Omega,\alpha}(y),
\]
see \hyperref[Jalpha sup attained]{Proposition \ref{Jalpha sup attained}}. This, together with \eqref{intro-optimal-value2}, shows that
\[
\max_{y\in\Omega}
\int_\Omega e^{\alpha|G_\Omega(x,y)|}\,\mathrm{d}x
\]
is precisely the optimal right-hand side in \eqref{sharp BM}. Thus, \eqref{sharp BM} is sharp
both with respect to the range of the exponent $\alpha \in (0, \Lambda_m)$ and in its right-hand side, which completes the proof of \hyperref[main sharp BM]{Theorem \ref{main sharp BM}}.

Moreover, for every $\alpha \in (0, \Lambda_m)$, the set of maximum points of $J_{\Omega,\alpha}$
\begin{equation}\label{intro-maximizer-set}
K_{\Omega,\alpha}
:=
\left\{
y\in\Omega:
J_{\Omega,\alpha}(y)=\max_{z\in\Omega}J_{\Omega,\alpha}(z)
\right\}
\end{equation}
is a nonempty compact subset of $\Omega$ (see \hyperref[Jalpha sup attained]{Proposition \ref{Jalpha sup attained}}). Since
\[
J_{\Omega,\alpha}(y)
=
\int_\Omega e^{\alpha|G_\Omega(x,y)|}\,\mathrm{d}x
=
\int_\Omega
e^{\alpha|(G_\Omega\star\delta_y)(x)|}\,\mathrm{d}x \qquad \text{ for every } y \in \Omega,
\]
every point of $K_{\Omega,\alpha}$ gives rise to a maximizing   Dirac
measure for $S_{\Omega,\mathcal P}(\alpha)$. The following theorem shows that these are the only
extremal measures for $S_{\Omega,\mathcal P}(\alpha)$.

\begin{theorem}
\label{extremal characterization}
Let $\alpha\in(0,\Lambda_m)$. A measure
$\mu\in\mathcal P(\Omega)$ satisfies
\[
\int_\Omega
e^{\alpha |(G_\Omega\star\mu)(x)|}\,\mathrm{d}x
=
S_{\Omega,\mathcal P}(\alpha)
\]
if and only if
\(
\mu=\delta_y\)
 for some \(y\in K_{\Omega,\alpha}\). In particular, the set of probability measures attaining
$S_{\Omega,\mathcal P}(\alpha)$ is precisely
\(
\{\delta_y:y\in K_{\Omega,\alpha}\}\).
\end{theorem}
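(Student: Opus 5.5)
The plan is to rerun the Jensen chain \eqref{Jensen upper bound} with a general measure $\mu\in\mathcal P(\Omega)$ in place of $\mu_f$, and then read off what equality forces in each inequality. For $\mu\in\mathcal P(\Omega)$ we have, for a.e.\ $x$,
\[
e^{\alpha|(G_\Omega\star\mu)(x)|}\le e^{\alpha\int_\Omega|G_\Omega(x,y)|\,\mathrm{d}\mu(y)}\le \int_\Omega e^{\alpha|G_\Omega(x,y)|}\,\mathrm{d}\mu(y).
\]
Integrating in $x$ and applying Tonelli gives
\[
\int_\Omega e^{\alpha|G_\Omega\star\mu|}\,\mathrm{d}x\le\int_\Omega J_{\Omega,\alpha}(y)\,\mathrm{d}\mu(y)\le \max_\Omega J_{\Omega,\alpha}=S_\Omega(\alpha)=S_{\Omega,\mathcal P}(\alpha),
\]
using Proposition \ref{Jalpha sup attained} and \eqref{intro-optimal-value}. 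The "if" direction is immediate: $G_\Omega\star\delta_y=G_\Omega(\cdot,y)$, so $\delta_y$ gives exactly $J_{\Omega,\alpha}(y)=\max J_{\Omega,\alpha}$ for $y\in K_{\Omega,\alpha}$.

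For the "only if" direction, suppose equality holds. All three inequalities must then be equalities. The quantities involved are finite, since $J_{\Omega,\alpha}$ is bounded by Theorem \ref{exp uniform G integrability}, and the integrands are ordered pointwise a.e. Hence:
\begin{enumerate}
\item[(a)] $\int J_{\Omega,\alpha}\,\mathrm{d}\mu=\max J_{\Omega,\alpha}$. Since $J_{\Omega,\alpha}\le\max J_{\Omega,\alpha}$ and $J_{\Omega,\alpha}$ is continuous (Section \ref{section5}), $\mu$ is supported in the compact set $K_{\Omega,\alpha}$.
\item[(b)] For a.e.\ $x$, Jensen's inequality for the strictly convex exponential is an equality. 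So $y\mapsto|G_\Omega(x,y)|$ is $\mu$-a.e.\ constant, equal to some $c(x)$.
\end{enumerate}

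It remains to show that (b) forces $\mu$ to be a Dirac mass. I would argue by contradiction. Suppose $\operatorname{supp}\mu$ contains two distinct points $y_1\ne y_2$. Take a set $E\subset\Omega$ of positive measure near $y_1$. By the logarithmic singularity $G_\Omega(x,y)=\frac{2}{\Lambda_m}\log\frac1{|x-y|}+O(1)$, locally uniformly (available from the pointwise Green estimates used earlier), we get $|G_\Omega(x,y)|\ge M$ for $x\in E$ and $y\in B_r(y_1)$, with $M$ large when $E$ and $r$ are small. On the other hand, $|G_\Omega(x,y)|$ stays bounded for $x\in E$ and $y\in B_r(y_2)$, because of the separation. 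Both balls carry positive $\mu$-mass, so for a.e.\ $x\in E$ the function $|G_\Omega(x,\cdot)|$ is not $\mu$-a.e.\ constant. This contradicts (b), so $\operatorname{supp}\mu$ is a single point $y$, and $\mu=\delta_y$ with $y\in K_{\Omega,\alpha}$ by (a).

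The main obstacle is the quantitative separation step: the remainder in the singularity expansion must be uniform for poles in a compact subset of $\Omega$. That uniformity is exactly what the Green-function estimates of the earlier sections (continuity of the regular part) provide, so I expect to invoke them rather than reprove them. A minor point is measurability and the a.e.\ equality bookkeeping in (b), which follows from Tonelli since all integrands are nonnegative.
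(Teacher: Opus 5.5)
Your proof is correct, and its skeleton matches the paper's. Both run the Jensen chain with $\mu$ in place of $\mu_f$, get $\mu(\Omega\setminus K_{\Omega,\alpha})=0$ from $\int(M_\alpha-J_{\Omega,\alpha})\,\mathrm{d}\mu=0$, and use the equality case of Jensen to make $|G_\Omega(x,\cdot)|$ $\mu$-a.e.\ constant for a.e.\ $x$. Where you differ is in how two support points lead to a contradiction.

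\textbf{The paper's route.} It fixes one admissible $x_0\notin\{y_0,y_1\}$. It uses only continuity of $y\mapsto G_\Omega(x_0,y)$ off the diagonal and the positive $\mu$-mass of small balls around $y_0,y_1$ to get $|G_\Omega(x_0,y_0)|=c(x_0)=|G_\Omega(x_0,y_1)|$. This yields $|G_\Omega(\cdot,y_0)|=|G_\Omega(\cdot,y_1)|$ a.e., which contradicts the fixed-pole separation \eqref{Green pole separation}.

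\textbf{Your route.} You separate the values of $|G_\Omega(x,\cdot)|$ quantitatively for every $x$ in a small ball $E$ around $y_1$: at least $M$ on $B_r(y_1)$ and below $M$ on $B_r(y_2)$. This skips the two-pole identity. The price is a lower bound for $G_\Omega(x,y)$ near the diagonal that is uniform in the pole. Lemma~\ref{lem:uniform-green-estimates} does not supply this, since in \eqref{inferior green estimate} both $r_y$ and $C_y$ depend on $y$.

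To make your step rigorous, do two things:
\begin{enumerate}
\item[(i)] Cite the bound $|H_{\Omega,y}(x)|\le C_K$ for $y\in K:=\overline{B(y_1,r_0)}$ and $x\in\Omega$. It comes from the Mayboroda--Maz'ya estimate in the proof of Lemma~\ref{Robin technical lemma}, which does not use the theorem, so there is no circularity. It gives $G_\Omega(x,y)\ge c_m\log\frac{D}{2r}-C_K$ for $x,y\in B(y_1,r)$.
\item[(ii)] For $r\le|y_1-y_2|/4$, fix first the $r$-independent off-diagonal bound $C_\Omega\log\bigl(1+2D/|y_1-y_2|\bigr)$ from \eqref{sharp Green function estimate}. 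Only then shrink $r$ so that $M$ exceeds it.
\end{enumerate}

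Compared with yours, the paper's route needs lighter Green-function estimates, while yours gives a more direct contradiction. One harmless slip: the coefficient of the logarithm is $c_m=2m/\Lambda_m$, not $2/\Lambda_m$; only its positivity matters.
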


Although $S_{\Omega,L^1}(\alpha)$ is not attained, its value is
approached by normalized data concentrating at any point
$y\in K_{\Omega,\alpha}$ and $S_{\Omega,\mathcal P}(\alpha)$ is attained in $\mathcal{P}(\Omega)$ precisely
by the corresponding Dirac masses. The preceding characterization reduces the determination of the
optimal bound in \eqref{sharp BM} to the problem of locating the maximizers of
$J_{\Omega,\alpha}$. On the unit ball, Boggio's explicit Green function allows
us to express the optimal
upper bound in an explicit form.

\begin{theorem}\label{thm:intro-ball}
Let $B:=B(0,1)\subset\mathds{R}^{2m}$ and let
$\alpha\in(0,\Lambda_m)$. Then $J_{B,\alpha}$ is radial and strictly
decreasing with respect to the distance from the origin.
Consequently,
\(
K_{B,\alpha}=\{0\}
\),
and $\delta_0$ is the unique extremal measure.
\end{theorem}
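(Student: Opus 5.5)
The proof will rest on Boggio's explicit formula for the Green function of $(-\Delta)^m$ in $B\subset\mathds{R}^{2m}$ together with the Möbius invariance of the quantity appearing in it. Recall that
\[
G_B(x,y)=k_m\int_1^{[x,y]/|x-y|}(v^2-1)^{m-1}v^{1-2m}\,\mathrm{d}v,
\qquad
[x,y]^2=|x-y|^2+(1-|x|^2)(1-|y|^2),
\]
with $k_m>0$. In particular $G_B>0$, so $|G_B|=G_B$. Moreover, $G_B$ depends on $(x,y)$ only through the ratio $\rho(x,y):=|x-y|/[x,y]\in[0,1)$, which is a decreasing function of $[x,y]/|x-y|$.

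So I will write $G_B(x,y)=g(\rho(x,y))$, where $g(r):=G_B(re_1,0)$ is strictly decreasing on $(0,1]$, $g(1)=0$, and $g(r)\sim c\log(1/r)$ as $r\to0$. Radial symmetry of $J_{B,\alpha}$ is then immediate: for every orthogonal map $R$ we have $\rho(Rx,Ry)=\rho(x,y)$, and the change of variables $x\mapsto Rx$ gives $J_{B,\alpha}(Ry)=J_{B,\alpha}(y)$.

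For the monotonicity I will use the layer-cake representation in the variable $\rho$. Fix $y$ with $|y|=t\in[0,1)$ and set
\[
V_t(r):=\bigl|\{x\in B:\rho(x,y)<r\}\bigr|.
\]
The set $\{\rho(\cdot,y)<r\}$ is a pseudo-hyperbolic ball, that is, the image of $B(0,r)$ under the Möbius automorphism of $B$ sending $0$ to $y$. It is a Euclidean ball of radius $r(1-t^2)/(1-t^2r^2)$, which I will verify by a direct computation with $[x,y]$. Hence
\[
V_t(r)=|B|\left(\frac{r(1-t^2)}{1-t^2r^2}\right)^{2m}.
\]
For fixed $r\in(0,1)$, the map $t\mapsto(1-t^2)/(1-t^2r^2)$ is strictly decreasing on $[0,1)$. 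Therefore $V_t(r)$ is strictly decreasing in $t$ for each $r\in(0,1)$.

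Writing $J_{B,\alpha}(y)=\int_0^1 e^{\alpha g(r)}\,\mathrm{d}V_t(r)$ and integrating by parts yields
\[
J_{B,\alpha}(y)=|B|\,e^{\alpha g(1)}+\int_0^1\alpha\,(-g'(r))\,e^{\alpha g(r)}\,V_t(r)\,\mathrm{d}r
=|B|+\int_0^1\alpha\,(-g'(r))\,e^{\alpha g(r)}\,V_t(r)\,\mathrm{d}r .
\]
In this integral $-g'>0$ on $(0,1)$, and $V_t(r)$ is the only factor depending on $t$. It follows that $t\mapsto J_{B,\alpha}(y)$ is strictly decreasing. The conclusion $K_{B,\alpha}=\{0\}$ then follows, and Theorem \ref{extremal characterization} shows that $\delta_0$ is the unique extremal measure.

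The main technical point is justifying the integration by parts, namely that the boundary term at $r=0$ vanishes and that the resulting integral is finite. For this I will use $e^{\alpha g(r)}\lesssim r^{-2\alpha/\Lambda_m}$ near $0$, which comes from the logarithmic singularity of $G_B$ with coefficient $2/\Lambda_m$; this coefficient is $1/(2\pi)$ when $m=1$. Combined with $V_t(r)\lesssim r^{2m}$ and $\alpha<\Lambda_m$, this gives $e^{\alpha g(r)}V_t(r)\to0$ as $r\to0$. Integrability of the remaining integral follows from the finiteness of $J_{B,\alpha}$ given by Theorem \ref{exp uniform G integrability}, or directly from the same power bounds. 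A secondary check is the explicit pseudo-hyperbolic ball computation, which I expect to be routine.
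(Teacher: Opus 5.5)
Your proof is correct and is essentially the paper's: Boggio's formula written as a decreasing function $\Psi$ of $\rho_y(x)=|x-y|/[XY]$, the sublevel sets $\{\rho_y<t\}$ identified as Euclidean balls inside $B$ of radius $t(1-|y|^2)/(1-t^2|y|^2)$, a layer-cake formula, and strict decrease of that radius in $|y|$. Two cosmetic differences: your integration by parts in $r$ is the paper's $J_{B,\alpha}(y)=|B|+\int_1^\infty|E_{y,t_\tau}|\,\mathrm{d}\tau$ after the substitution $\tau=e^{\alpha g(r)}$, and you get radiality from rotation invariance rather than reading it off that formula.

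One slip to fix: the logarithmic coefficient is $c_m=2m/\Lambda_m$, not $2/\Lambda_m$ (they coincide only for $m=1$). So near $0$ the correct bound is $e^{\alpha g(r)}\lesssim r^{-\alpha c_m}$, giving $e^{\alpha g(r)}V_t(r)\lesssim r^{c_m(\Lambda_m-\alpha)}\to0$, and this is exactly where $\alpha<\Lambda_m$ is used.
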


\noindent
As a direct consequence of \hyperref[thm:intro-ball]{Theorem \ref{thm:intro-ball}}, we obtain
the following sharp form of the Brezis--Merle inequality on the unit ball.

\begin{corollary}\label{sharp BM ball} Let $f\in L^1(B)\setminus\{0\}$, and let $u\in L^1(B)$ be the
corresponding very weak solution of \eqref{POMEGA} with $\Omega=B$. Then, for every \(\alpha\in(0,\Lambda_m)\),

\begin{equation*}
    \int_{B}
\exp\left(
\frac{\alpha |u(x)|}{\|f\|_{L^1(B)}}
\right)\,\mathrm{d}x\leq |\mathds{S}^{2m-1}|\int_{0}^{1}r^{2m-1-\alpha c_m}\exp\left[\Theta_{\alpha,m}(r)\right]\,\mathrm{d}r,
\end{equation*}
where
\begin{equation}
\label{Boggio correction profile}
\Theta_{\alpha,m}(r)
:=
\begin{cases}
0,
& \text{if } m=1,\\[0.6em]
\displaystyle
\alpha c_m
\sum_{k=1}^{m-1}
\frac{(-1)^k}{2k}
\binom{m-1}{k}
\left(1-r^{2k}\right),
& \text{if } m\geq 2.
\end{cases}
\end{equation}
The range $0<\alpha<\Lambda_m$ is sharp, and the right-hand side is
optimal for each fixed $\alpha$ in this range.
\end{corollary}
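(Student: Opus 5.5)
Corollary \ref{sharp BM ball} follows from Theorem \ref{main sharp BM} and Theorem \ref{thm:intro-ball}. By Theorem \ref{main sharp BM} with $\Omega=B$, the left-hand side is bounded by $\max_{y\in B}J_{B,\alpha}(y)$. This bound is optimal for each fixed $\alpha$, and the range $0<\alpha<\Lambda_m$ is sharp. By Theorem \ref{thm:intro-ball}, $K_{B,\alpha}=\{0\}$, so the maximum equals $J_{B,\alpha}(0)=\int_B e^{\alpha|G_B(x,0)|}\,\mathrm{d}x$. It therefore remains to compute $G_B(x,0)$ explicitly from Boggio's formula and to check that the integral equals the stated one-dimensional expression.

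Boggio's formula gives
\[
G_B(x,y)=k_m|x-y|^{0}\int_1^{[x,y]/|x-y|}(v^2-1)^{m-1}v^{1-2m}\,\mathrm{d}v,
\]
with $[x,y]=\bigl||x|y-x/|x|\bigr|$ and $k_m$ the normalizing constant. In conformal dimension $n=2m$, the power $|x-y|^{2m-n}$ is trivial. For $y=0$ we have $[x,0]=1$, so the upper limit is $1/r$ with $r=|x|$. Expanding $(v^2-1)^{m-1}=\sum_{k}\binom{m-1}{k}(-1)^{m-1-k}v^{2k}$ and integrating $v^{2k+1-2m}$ term by term, the $k=m-1$ term gives $\log(1/r)$. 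The remaining terms give $\frac{(-1)^{m-1-k}}{2k+2-2m}\binom{m-1}{k}(r^{2m-2-2k}-1)$. Reindexing $j=m-1-k$ and using $\binom{m-1}{m-1-j}=\binom{m-1}{j}$ turns these into
\[
\sum_{j=1}^{m-1}\frac{(-1)^j}{2j}\binom{m-1}{j}(1-r^{2j}).
\]
Hence $G_B(x,0)=c_m\bigl[-\log r+\sum_{j=1}^{m-1}\frac{(-1)^j}{2j}\binom{m-1}{j}(1-r^{2j})\bigr]$, where $c_m=k_m$. This constant should match the normalization $c_m=1/(\gamma_m)$ with $\alpha c_m\to 1\cdot 2m$ at $\alpha=\Lambda_m$. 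A consistency check is that $\Lambda_m c_m=2m$, which makes $r^{2m-1-\alpha c_m}$ integrable exactly when $\alpha<\Lambda_m$.

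To get the absolute value, Boggio's formula shows $G_B(x,0)>0$, since the integrand is positive on $(1,1/r)$. Then $e^{\alpha|G_B(x,0)|}=r^{-\alpha c_m}e^{\Theta_{\alpha,m}(r)}$, and integrating in polar coordinates gives $|\mathds S^{2m-1}|\int_0^1 r^{2m-1-\alpha c_m}e^{\Theta_{\alpha,m}(r)}\,\mathrm{d}r$. This is the claimed bound. Finiteness for $\alpha<\Lambda_m$ follows because $\Theta_{\alpha,m}$ is bounded on $[0,1]$.

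The main obstacle is bookkeeping: matching Boggio's constant $k_m$ with the paper's $c_m$, and getting the sign and indexing right in the polynomial correction so that it reproduces $\Theta_{\alpha,m}$ exactly. For $m=1$ the sum is empty and we recover $-\frac1{2\pi}\log r$, giving $\frac{4\pi}{4\pi-\alpha}|B|$, consistent with \eqref{eq:fontana-morpurgo-bound}.
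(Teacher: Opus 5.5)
Your proposal is correct and follows the paper's proof. Theorem \ref{main sharp BM} (equivalently, the optimal-value identity of Proposition \ref{prop:L1-value-and-nonattainment}) combined with Theorem \ref{thm:intro-ball} identifies the sharp constant as $J_{B,\alpha}(0)$, and evaluating Boggio's Green function at the pole $0$, which is \eqref{Green especial}, then integrating in polar coordinates gives the stated expression. Your concern about matching the normalizing constant is unnecessary: \eqref{Boggio integral formula} is already stated with $c_m$, and that constant is in any case forced by matching the logarithmic singularity with $\Phi$.
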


In the particular case $m=1$,
\hyperref[sharp BM ball]{Corollary \ref{sharp BM ball}} yields
\[
\int_B
\exp\left(
\frac{\alpha|u(x)|}{\|f\|_{L^1(B)}}
\right)\,\mathrm{d}x
\leq
\frac{4\pi^2}{4\pi-\alpha},
\qquad
0<\alpha<4\pi,
\]
see \hyperref[rmk:Fontana-Morpurgo]{Remark \ref{rmk:Fontana-Morpurgo}}. Since $|B|=\pi$, this is precisely
\eqref{eq:fontana-morpurgo-bound} with $\Omega=B$. Thus,
\hyperref[sharp BM ball]{Corollary \ref{sharp BM ball}} extends the result of Fontana and
Morpurgo \cite[Corollary 2]{FontanaMorpurgo} from the Laplacian to the polyharmonic operator of arbitrary
order in the corresponding conformal dimension.

Having characterized the extremal measures for
$S_{\Omega,\mathcal P}(\alpha)$, with $\alpha\in(0,\Lambda_m)$, we finally
investigate the behavior of the optimal value $S_\Omega(\alpha)$ and of the
maximizing set $K_{\Omega,\alpha}$ as $\alpha\uparrow\Lambda_m$. To formulate
the critical asymptotics, we introduce the Robin function
\begin{equation}\label{eq:robin-function}
R_\Omega(y)
:=
\lim_{x\to y}
\left[
G_\Omega(x,y)
-
c_m\log\left(\frac{1}{|x-y|}\right)
\right],\quad \text{ where }\quad c_m:=
    \frac{1}{2^{2m-1}\pi^m(m-1)!}.
\end{equation}

\begin{theorem}
\label{intro-critical}
We have
\[
\lim_{\alpha\uparrow\Lambda_m}(\Lambda_m-\alpha)S_\Omega(\alpha)
=
2^{2m}\pi^{2m}
\exp\left(
\Lambda_m\max_{y\in\Omega}R_\Omega(y)
\right).
\]
Moreover, let $(\alpha_k)\subset(0,\Lambda_m)$ satisfy
$\alpha_k\uparrow\Lambda_m$ as $k \to + \infty$, and choose
$y_{\alpha_k}\in K_{\Omega,\alpha_k}$ for every $k$. Then there exists $y^\ast\in\Omega$ such that, up to a
subsequence,
\[
y_{\alpha_k}\longrightarrow y^\ast,
\qquad  
R_\Omega(y^\ast)=\max_{y\in\Omega}R_\Omega(y), \qquad \text{ and } \qquad \delta_{y_{\alpha_k}}
\rightharpoonup^\ast
\delta_{y^\ast}
\qquad\text{in }\mathcal{M}(\Omega).
\]
If the Robin function has a unique maximum point, the convergence
holds without passing to a subsequence.
\end{theorem}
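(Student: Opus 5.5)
The proof has two parts: an asymptotic expansion of $J_{\Omega,\alpha}(y)$ for each fixed pole $y$ that is uniform on compact subsets of $\Omega$, and a separate argument excluding poles near $\partial\Omega$.

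\emph{Local expansion.} Write
\[
G_\Omega(x,y)=c_m\log\frac{1}{|x-y|}+H_\Omega(x,y),
\]
with $H_\Omega$ the regular part. It is continuous on $\Omega\times\Omega$ (smooth away from the boundary) and satisfies $H_\Omega(y,y)=R_\Omega(y)$. Note that $\Lambda_m c_m=2m$. Fix a compact set $K\subset\Omega$ and a small $\rho>0$ with $\rho<\operatorname{dist}(K,\partial\Omega)/2$. For $y\in K$, split $J_{\Omega,\alpha}(y)$ into the integral over $B(y,\rho)$ and over its complement.
\begin{itemize}
\item On the complement, $|G_\Omega(\cdot,y)|$ is bounded uniformly in $y\in K$, so this piece is $O(1)$ uniformly in $\alpha\le\Lambda_m$.
\item On $B(y,\rho)$, $G_\Omega>0$ once $\rho$ is small. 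Using $|H_\Omega(x,y)-R_\Omega(y)|\le\omega(\rho)$, with $\omega$ a modulus of continuity uniform on $K$, the integral lies between $e^{\alpha(R_\Omega(y)\mp\omega(\rho))}\int_{B(0,\rho)}|z|^{-\alpha c_m}\,dz$.
\item This radial integral equals $|\mathbb S^{2m-1}|\,\rho^{2m-\alpha c_m}/(2m-\alpha c_m)$. Since $2m-\alpha c_m=c_m(\Lambda_m-\alpha)$, it is asymptotic to $|\mathbb S^{2m-1}|/(c_m(\Lambda_m-\alpha))$.
\item Finally, $|\mathbb S^{2m-1}|/c_m=\frac{2\pi^m}{(m-1)!}\,2^{2m-1}\pi^m(m-1)!=2^{2m}\pi^{2m}$.
\end{itemize}
Letting $\alpha\uparrow\Lambda_m$ and then $\rho\to0$ gives
\[
(\Lambda_m-\alpha)J_{\Omega,\alpha}(y)\to 2^{2m}\pi^{2m}e^{\Lambda_m R_\Omega(y)}
\]
uniformly on $K$. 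The lower bound for the theorem follows by taking $y$ a maximum point of $R_\Omega$. This point exists because $R_\Omega(y)\to-\infty$ as $y\to\partial\Omega$, which follows from standard boundary estimates on $H_\Omega$ that I will prove or quote.

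\emph{Upper bound: excluding poles near the boundary.} This is the main obstacle. I must show that
\[
\limsup_{\alpha\uparrow\Lambda_m}\ \sup_{\operatorname{dist}(y,\partial\Omega)<\delta}(\Lambda_m-\alpha)J_{\Omega,\alpha}(y)
\]
can be made smaller than $2^{2m}\pi^{2m}e^{\Lambda_m\max R_\Omega}$ by choosing $\delta$ small. I would use the pointwise Green estimates of Section 5, namely
\[
|G_\Omega(x,y)|\le c_m\log\Bigl(1+\frac{C\,d(y)^m d(x)^m}{|x-y|^{2m}}\Bigr)^{1/m}
\]
or equivalently the bound $|G_\Omega(x,y)|\le c_m\log\frac{C\,d(y)}{|x-y|}$ for $|x-y|<d(y)/2$, with $d=\operatorname{dist}(\cdot,\partial\Omega)$, together with a bounded contribution elsewhere. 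The critical singular part of $J$ near the pole is then at most $C\,d(y)^{\alpha c_m}\int_{|z|<d(y)/2}|z|^{-\alpha c_m}\,dz\lesssim d(y)^{2m}/(\Lambda_m-\alpha)$. Away from the pole, $|G|\le c_m\log(C\,d(y)/|x-y|)$ stays bounded, so that region contributes a uniformly bounded integral. Hence the boundary-layer contribution to $(\Lambda_m-\alpha)J$ is $O(d(y)^{2m})+o(1)$. This is small for small $\delta$, which yields the upper bound and the limit formula. The delicate point is making the constant in the near-pole region carry the factor $d(y)^{2m}$, rather than merely being bounded, uniformly as $\alpha\to\Lambda_m$.

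\emph{Concentration.} Take $y_{\alpha_k}\in K_{\Omega,\alpha_k}$. By the boundary estimate, any subsequence approaching $\partial\Omega$ would have $(\Lambda_m-\alpha_k)S_\Omega(\alpha_k)$ eventually below the limit already established, which is a contradiction. So the maximizers stay in a compact set, and a subsequence converges to some $y^\ast\in\Omega$. Uniform convergence on compacts and continuity of the limit function give
\[
2^{2m}\pi^{2m}e^{\Lambda_m R_\Omega(y^\ast)}=\lim_k(\Lambda_m-\alpha_k)J_{\Omega,\alpha_k}(y_{\alpha_k})=2^{2m}\pi^{2m}e^{\Lambda_m\max R_\Omega},
\]
so $y^\ast$ maximizes $R_\Omega$. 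Weak-$*$ convergence of $\delta_{y_{\alpha_k}}$ to $\delta_{y^\ast}$ follows by testing against $C_c(\Omega)$. If $R_\Omega$ has a unique maximum point, every subsequence has the same limit, so the whole sequence converges.
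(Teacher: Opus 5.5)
Your proposal is correct and follows essentially the paper's argument. The ingredients are the same: uniform asymptotics of $(\Lambda_m-\alpha)J_{\Omega,\alpha}$ on compact sets (you use a modulus of continuity of the jointly continuous regular part, where the paper proves a uniform Lipschitz bound in Lemma~\ref{Robin technical lemma}); the lower bound at a maximum point of $R_\Omega$; and the estimate $J_{\Omega,\alpha}(y)\le C+C\,d(y)^{2m}/(\Lambda_m-\alpha)$, coming from the sharp-coefficient bound \eqref{Green omega to ball}, to rule out poles and maximizers near $\partial\Omega$. One harmless slip: your first displayed Green bound, with exponent $1/m$, has coefficient $2c_m$ at the pole, so it is not equivalent to the second bound. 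Only the second bound is needed for the factor $d(y)^{2m}$.
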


\noindent
The first conclusion determines the leading-order behavior of the optimal constant on the right-hand side of the Brezis--Merle inequality, while the second shows that the Robin function selects the possible limiting locations of the extremal measures for $S_{\Omega,\mathcal P}(\alpha_k)$ as $k \to + \infty$. Our approach is related in spirit to
Flucher's analysis of concentration for the Trudinger--Moser
inequality \cite{Flucher}. We refer to \hyperref[Flucher]{Remark \ref{Flucher}} for further details on this connection.

\begin{remark}
Although our results are formulated for polyharmonic operators of arbitrary integer orders $m \geq 1$,
several of their conclusions are new already in the case $m=1$, which corresponds to the Laplacian operator. To
the best of our knowledge, these include the non-existence of maximizers for 
$S_{\Omega,L^1}(\alpha)$ in $L^1(\Omega)$, the characterization of all
probability measures attaining $S_{\Omega,\mathcal P}(\alpha)$, and, as
$\alpha\uparrow4\pi$, the selection of the possible limits of the
maximizing points by the maximum set of the Robin function.
\end{remark}

\subsection{Outline of the paper and notation}
The paper is organized as follows. In \hyperref[section2]{Section \ref{section2}}, we develop the Green function framework for very weak solutions. In particular, we recall the pointwise bound for the polyharmonic Green function due to
Mayboroda and Maz'ya \cite{Mazya}, see \eqref{sharp Green function estimate}, which is one of the main ingredients of our arguments. Furthermore, we establish a crucial interior estimate for polyharmonic functions, stated in
\hyperref[interior estimate BMOMEGA]{Lemma \ref{interior estimate BMOMEGA}}. These tools yield the uniform pointwise
Green function estimates established in \hyperref[lem:uniform-green-estimates]{Lemma \ref{lem:uniform-green-estimates}}, while preserving
the exact coefficient $c_m$ of the logarithmic singularity (see \eqref{Fundamental Solution}). In \hyperref[section3-]{Section \ref{section3-}}, we prove the Brezis--Merle-type estimate together with the sharpness of its critical exponent. \hyperref[section3]{Section \ref{section3}} is devoted to the analysis of the associated optimal constant, including the non-existence of maximizers in $L^1(\Omega)$ and the extension to probability measures. 

In \hyperref[section5]{Section \ref{section5}}, we establish further properties of the one-pole profile $J_{\Omega,\alpha}$, which allow us, in particular, to identify the optimal constant on the right-hand side of \eqref{sharp BM} and complete the proof of \hyperref[main sharp BM]{Theorem \ref{main sharp BM}}.

\hyperref[section6]{Section \ref{section6}} is devoted to the characterization of all extremals for $S_{\Omega,\mathcal P}(\alpha)$ on general domains (which gives the proof of \hyperref[extremal characterization]{Theorem \ref{extremal characterization}}) and on the unit ball (which gives the proofs of \hyperref[thm:intro-ball]{Theorem \ref{thm:intro-ball}} and \hyperref[sharp BM ball]{Corollary \ref{sharp BM ball}}).

Finally, in \hyperref[section4]{Section \ref{section4}}, we determine the critical asymptotics of the optimal right-hand side of \eqref{sharp BM} and describe the selection of the limiting extremal measures by the polyharmonic Robin function.

Throughout the paper, we use the following notation. Unless otherwise stated, $m \geq 1$ is an integer and $\Omega\subset\mathds{R}^{2m}$ is a smooth bounded
domain. We denote by $D>0$ the diameter of $\Omega$, namely
\begin{equation} \label{eq:notation:diameter}
    D:=\mathrm{diam}(\Omega)= \sup \left\{|x-y| \: : \: x,y \in \Omega \right\},
\end{equation}
and by $d(z)>0$ the distance of a point $z \in \Omega$ from the boundary $\partial \Omega$, namely
\[
    d(z):=\operatorname{dist}(z,\partial\Omega)= \inf \left\{ |z-y| \: : \: y \in \partial \Omega \right\}.
\]
A ball centered at a point $x_0$ with radius $R>0$ is denoted by $B(x_0,R)$, and if $x_0=(0, \dots, 0)$ we simply write $B_R$.

\section{Green representation formula and Green function estimates}\label{section2}

We begin this section by justifying the Green representation formula \eqref{eq:intro:GreenRep} for the solutions of \eqref{POMEGA} in the sense of \eqref{very weak solution2}.

Let $\Omega \subset\mathds{R}^{2m}$ ($m \geq 1$) be a smooth bounded domain. We first introduce the Green function $G_{\Omega}$  of $(-\Delta)^m$ in $\Omega$ under
homogeneous Dirichlet boundary conditions. The fundamental solution of the polyharmonic operator $(-\Delta)^m$
in $\mathds R^{2m}$ is determined up to an additive constant and we take
\begin{equation}\label{Fundamental Solution}
    \Phi(x):=c_m\log\frac{D}{|x|},
\end{equation}
where $c_m:=[2^{2m-1}\pi^m(m-1)!]^{-1}$
and $D>0$ is the diameter of $\Omega$, see \eqref{eq:notation:diameter}.
It is well known that
\(
    (-\Delta)^{m}\Phi=\delta_0\) in
    \(\mathds{R}^{2m}
\), in the sense of distributions, namely
\[
    \int_{\mathds{R}^{2m}}\Phi(x)\,(-\Delta)^{m}\varphi(x)\,\mathrm{d}x=\varphi(0) \qquad \text{ for every } \varphi \in C_0^{\infty}(\mathds{R}^{2m}).
\]

The coefficient $c_m$ of the logarithmic singularity determines the
critical range of the exponent appearing in \eqref{sharp BM}. Indeed, for every
$\alpha>0$,
\begin{equation*}
    e^{\alpha\Phi(x)}
    =
    \left(\frac{D}{|x|}\right)^{\alpha c_m},
\end{equation*}
and this function is integrable in a neighbourhood of the origin if and only if
\(\alpha c_m<2m\). Since
\begin{equation}\label{eq:critical-constants}
    \Lambda_m=
    2^{2m}\pi^m m!=
    \frac{2m}{c_m} \quad \text{ and } \quad c_m \Lambda_m=2m,
\end{equation}
the condition
\(
    \alpha c_m<2m
\)
is equivalent to
\(
    \alpha<\Lambda_m\).
Although this is only a local integrability criterion for the fundamental solution, it already motivates the appearance of the critical exponent $\Lambda_m$ in \hyperref[main sharp BM]{Theorem \ref{main sharp BM}}. The corresponding pointwise estimates for $G_\Omega$, uniform with
respect to the pole, are established in
\hyperref[lem:uniform-green-estimates]{Lemma
\ref{lem:uniform-green-estimates}}.

Now, fix $x\in\Omega$.
By translation invariance of $(-\Delta)^m$, we have $(-\Delta)_{y}^{m}\Phi(y-x)=\delta_x$ in $\mathds{R}^{2m}$, in the sense of distributions; in particular, 
\begin{equation}\label{fundamental property}
    \int_{\Omega} \Phi(y-x)\,(-\Delta)^{m}\varphi(y)\,\mathrm{d}y
    =\varphi(x)
    \qquad \text{ for every } \varphi \in C_0^{\infty}(\Omega).
\end{equation}
To take into account the boundary conditions in \eqref{POMEGA}, we introduce $H_{\Omega,x}$ as the solution
of the auxiliary problem

\begin{equation}\label{auxiliary}
\begin{cases}
(-\Delta)^m H_{\Omega,x}(y) = 0 & \text{in } \Omega,\\[4pt]
\partial_\nu^{j-1} H_{\Omega,x}(y) = \partial_\nu^{j-1}\Phi(x-y)\ \text{for }j=1,\dots,m & \text{on } \partial\Omega.
\end{cases}
\end{equation}
Whenever \eqref{auxiliary} admits a classical solution, we define the Green function of $(-\Delta)^m$ in $\Omega$ with Dirichlet boundary conditions as
\[
G_{\Omega}(x,y)\coloneqq \Phi(x-y)-H_{\Omega,x}(y).
\]
By construction,
\(
    (-\Delta)_y^{m}G_{\Omega}(x,y)=\delta_x\) in \(\Omega\), in the sense of distributions,
and 
\(
    \partial_{\nu}^{j-1}G_{\Omega}(x,y)=0\) pointwise on $\partial\Omega$, for every \(j=1,\dots,m\). Moreover, the Green function is symmetric.

Problem \eqref{auxiliary} does admit a classical solution for every $x\in\Omega$ in our setting: Krasovski\u{\i} proved that the Green function of a smooth bounded domain exists and is unique (see \cite[Theorem~3.3]{ExistenceGreen}). Finally, for each fixed $x\in\Omega$, the regular part $H_{\Omega,x}$ of $G_{\Omega}$ is smooth in $\Omega$.

We are now in position to prove the distributional property of $G_{\Omega}$ with respect to the test space $C_{\mathrm{cl}}^{2m}(\overline{\Omega})$ introduced in \eqref{test space}.
\begin{lemma}\label{dist property2}
For every
$\varphi\in C_{\mathrm{cl}}^{2m}(\overline{\Omega})$
and every $x\in\Omega$, we have
\begin{equation}
\label{distribution property}
\varphi(x)
=
\int_\Omega
G_\Omega(x,y)(-\Delta)^m\varphi(y)\,\mathrm{d}y.
\end{equation}
\end{lemma}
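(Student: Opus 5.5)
We will prove the identity \eqref{distribution property} by excising a small ball around the pole and applying the polyharmonic Green identity \eqref{eq:polyharmonic-green-identity} on the remaining domain. Fix $x\in\Omega$ and $\varphi\in C_{\mathrm{cl}}^{2m}(\overline{\Omega})$, and write $G(y):=G_\Omega(x,y)=\Phi(x-y)-H_{\Omega,x}(y)$. For small $\varepsilon>0$, set $\Omega_\varepsilon:=\Omega\setminus\overline{B(x,\varepsilon)}$. On $\Omega_\varepsilon$ the function $G$ is smooth up to the boundary, provided $H_{\Omega,x}\in C^{2m}(\overline{\Omega})$; this follows from the regularity of the classical solution of \eqref{auxiliary} with smooth boundary data on a smooth domain. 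Moreover $(-\Delta)^m G=0$ there, since $(-\Delta)^m\Phi(x-\cdot)=0$ away from $x$ and $H_{\Omega,x}$ is polyharmonic. Because $G\in L^1(\Omega)$ (the singularity is logarithmic), dominated convergence gives
\[
\int_\Omega G\,(-\Delta)^m\varphi\,\mathrm{d}y=\lim_{\varepsilon\to0}\int_{\Omega_\varepsilon}G\,(-\Delta)^m\varphi\,\mathrm{d}y .
\]

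Next we apply \eqref{eq:polyharmonic-green-identity} on $\Omega_\varepsilon$ with $u=G$. Its boundary is $\partial\Omega\cup\partial B(x,\varepsilon)$. On $\partial\Omega$ both $\varphi$ and $G$ have vanishing normal derivatives of orders $0,\dots,m-1$, and we want all the boundary terms $\Delta^j G\,\partial_\nu\Delta^{m-1-j}\varphi$ and $\partial_\nu\Delta^j G\,\Delta^{m-1-j}\varphi$ to vanish. They do, because of a standard fact: if a smooth function $w$ has $\partial_\nu^k w=0$ on $\partial\Omega$ for $k=0,\dots,m-1$, then all derivatives of $w$ of order $\le m-1$ vanish on $\partial\Omega$. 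This is seen by writing derivatives in a boundary-adapted frame, since tangential derivatives of vanishing quantities vanish. Hence $\Delta^j w$ and $\partial_\nu\Delta^j w$ vanish on $\partial\Omega$ whenever $2j\le m-1$, respectively $2j+1\le m-1$. In each pair $(j,m-1-j)$, the total order satisfies $2j+2(m-1-j)+1=2m-1$, so the order of the $\varphi$-factor is at most $m-1$ or the order of the $G$-factor is at most $m-1$. Therefore each boundary term contains a vanishing factor on $\partial\Omega$. Checking this order bookkeeping carefully is a routine but necessary step.

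The only contribution left comes from the small sphere $\partial B(x,\varepsilon)$, which is the main step. The smooth part $H_{\Omega,x}$ contributes terms that are $O(\varepsilon^{2m-1})\to0$, since the sphere has area $\sim\varepsilon^{2m-1}$ and all integrands are bounded. For $\Phi$, we have $|\Delta^j\Phi|\sim\varepsilon^{-2j}$ for $j\ge1$ (and $\sim|\log\varepsilon|$ for $j=0$), and $|\partial_\nu\Delta^j\Phi|\sim\varepsilon^{-2j-1}$. Multiplied by the area factor, every term vanishes in the limit except the one with $j=m-1$ involving $\partial_\nu\Delta^{m-1}\Phi\cdot\varphi$. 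That term tends to $\varphi(x)$, with the correct sign, because $c_m$ is exactly the normalization making $(-\Delta)^m\Phi=\delta_0$; equivalently, the flux of $-\nabla(-\Delta)^{m-1}\Phi$ through the sphere equals $1$. The sign can be double-checked against \eqref{fundamental property} by testing with $\varphi\in C_0^\infty(\Omega)$: for such $\varphi$ the $H$ term integrates to zero, since $H_{\Omega,x}$ is smooth and polyharmonic, so the identity reduces to \eqref{fundamental property}. This comparison also fixes the limiting constant without recomputing it. Finally, letting $\varepsilon\to0$ yields $\varphi(x)=\int_\Omega G_\Omega(x,y)(-\Delta)^m\varphi(y)\,\mathrm{d}y$.
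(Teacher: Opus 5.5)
Your proof is correct, but it takes a genuinely different route from the paper. The paper computes nothing. It notes that $\varphi$ is a classical solution of the Dirichlet problem $(-\Delta)^m\psi=(-\Delta)^m\varphi$ with homogeneous boundary data. It proves that this solution is unique, using the $L^p$ theory of Gazzola--Grunau--Sweers, and then invokes Krasovski\u{\i}'s representation theorem (his Theorem~3.3 and (3.29)).

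You instead derive the representation directly:
\begin{itemize}
\item You excise $B(x,\varepsilon)$ and apply Boggio's identity on $\Omega_\varepsilon$.
\item You kill the $\partial\Omega$ terms by the order count. Each product has total order $2m-1$, so one factor has order at most $m-1$. That factor vanishes because both $\varphi$ and $G_\Omega(x,\cdot)$ satisfy the full Dirichlet conditions.
\item You extract $\varphi(x)$ from the single surviving sphere term, $j=m-1$.
\end{itemize}
Fixing the constant by comparison with $\varphi\in C_0^\infty(\Omega)$ and the distributional identity for $\Phi$ is legitimate. The limiting sphere contribution has the form $\kappa\,\varphi(x)$, with $\kappa$ depending only on $\Phi$.

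What your route buys is self-containedness. It needs no uniqueness theorem, and it shows exactly where the normalization $c_m$ and the boundary conditions on both factors enter.

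Its one extra input, which you assert rather than prove, is that $H_{\Omega,x}\in C^{2m}(\overline{\Omega})$, or at least $C^{2m-1}$ up to $\partial\Omega$. This is more than the paper states, since it only records smoothness of $H_{\Omega,x}$ in $\Omega$. It is, however, standard boundary regularity for the polyharmonic Dirichlet problem with smooth data on a smooth domain. The paper also uses it implicitly later, for example when it writes $G_\Omega(x,y_n)\to G_\Omega(x,y_0)=0$ for $y_0\in\partial\Omega$.

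The paper's route is shorter, but it outsources to the cited theorem both this regularity and the identification of the Green function built from \eqref{auxiliary} with Krasovski\u{\i}'s.
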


\begin{proof}
Fix
$\varphi\in C_{\mathrm{cl}}^{2m}(\overline{\Omega})$
and consider the boundary value problem
\begin{equation}
\begin{cases}
(-\Delta)^m\psi=(-\Delta)^m\varphi
    & \text{in }\Omega,\\
\partial_\nu^j\psi=0
    & \text{on }\partial\Omega,
      \qquad j=0,\ldots,m-1.
\end{cases}
\end{equation}
Clearly, $\psi=\varphi$ is a classical solution of this problem. We
claim that it is the unique one. Indeed, if $\psi$ is another classical
solution and $w:=\psi-\varphi$, then
\[
\begin{cases}
(-\Delta)^m w=0
    & \text{in }\Omega,\\
\partial_\nu^j w=0
    & \text{on }\partial\Omega,
      \qquad j=0,\ldots,m-1.
\end{cases}
\]
By the uniqueness part of the $L^p$ theory for the polyharmonic
Dirichlet problem (see \cite[Theorem~2.20]{Gazzola}),
the homogeneous problem admits only the trivial strong solution.
Since $w$ is classical, it is in particular a strong solution.
Therefore, $w=0$, and hence $\psi=\varphi$.

Thus, $\varphi$ is the unique classical solution of the preceding
boundary value problem. Applying
\cite[Theorem~3.3 and (3.29)]{ExistenceGreen} gives
\eqref{distribution property}.
\end{proof}

The following pointwise bound for the polyharmonic Green function, due to
Mayboroda and Maz'ya \cite{Mazya}, will be crucial in our analysis:
there exists a constant $C_{\Omega}>0$ such that
\begin{equation}\label{sharp Green function estimate}
|G_{\Omega}(x,y)|
\leq
C_{\Omega}
\log\left(
1+\frac{\min\{d(x),d(y)\}}{|x-y|}
\right),
\qquad
x,y\in\Omega,\quad x\neq y.
\end{equation}
This is precisely \cite[Theorem~3.1]{Mazya} applied with $n=2m$ and $i=j=0$.

The above estimate \eqref{sharp Green function estimate}, combined with the identity \eqref{distribution property}, yields the following Green representation formula for very
weak solutions of \eqref{POMEGA}.

\begin{lemma}\label{solution2}
For every $f\in L^1(\Omega)$, problem \eqref{POMEGA} admits a unique
very weak solution in the sense of \eqref{very weak solution2}.
Moreover, this solution is represented by
\begin{equation}\label{Green representation}
u(x)=\int_\Omega G_\Omega(x,y)f(y)\,\mathrm{d}y
\qquad\text{for a.e. }x\in\Omega.
\end{equation}
\end{lemma}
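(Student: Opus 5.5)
We prove existence by showing that the function $u$ defined by \eqref{Green representation} is a well-defined $L^1$ function and a very weak solution. We then prove uniqueness by a duality argument that uses the solvability of the classical Dirichlet problem.

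\textbf{Step 1: $u\in L^1(\Omega)$.} The first step is a uniform $L^1$ bound on the Green function. By \eqref{sharp Green function estimate}, since $\min\{d(x),d(y)\}\le D$,
\[
|G_\Omega(x,y)|\le C_\Omega\log\Bigl(1+\frac{D}{|x-y|}\Bigr)\qquad\text{for all } x\neq y .
\]
The right-hand side is locally integrable in $\mathds{R}^{2m}$, because the logarithm is integrable near $0$. Hence
\[
M:=\sup_{y\in\Omega}\int_\Omega|G_\Omega(x,y)|\,\mathrm{d}x<\infty .
\]
By the symmetry of $G_\Omega$, the same bound holds with the roles of $x$ and $y$ exchanged. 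Tonelli's theorem then gives
\[
\int_\Omega\int_\Omega|G_\Omega(x,y)||f(y)|\,\mathrm{d}y\,\mathrm{d}x\le M\|f\|_{L^1(\Omega)}.
\]
So $u(x)=\int_\Omega G_\Omega(x,y)f(y)\,\mathrm{d}y$ is defined for a.e. $x$, and $u\in L^1(\Omega)$.

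\textbf{Step 2: $u$ is a very weak solution.} Fix $\varphi\in C^{2m}_{\mathrm{cl}}(\overline\Omega)$. Since $(-\Delta)^m\varphi$ is bounded, the integrability from Step 1 justifies Fubini's theorem:
\[
\int_\Omega u\,(-\Delta)^m\varphi\,\mathrm{d}x=\int_\Omega f(y)\int_\Omega G_\Omega(x,y)(-\Delta)^m\varphi(x)\,\mathrm{d}x\,\mathrm{d}y .
\]
Using the symmetry $G_\Omega(x,y)=G_\Omega(y,x)$ and Lemma \ref{dist property2} at the point $y$, the inner integral equals $\varphi(y)$. This gives \eqref{very weak solution2}.

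\textbf{Step 3: uniqueness.} Let $w\in L^1(\Omega)$ be the difference of two very weak solutions, so that
\[
\int_\Omega w\,(-\Delta)^m\varphi\,\mathrm{d}x=0\qquad\text{for every }\varphi\in C^{2m}_{\mathrm{cl}}(\overline\Omega).
\]
Take any $g\in C_0^\infty(\Omega)$ and solve $(-\Delta)^m\varphi=g$ with homogeneous Dirichlet data. By the Schauder or $L^p$ theory for the polyharmonic Dirichlet problem on smooth domains (\cite[Theorem~2.20]{Gazzola} together with bootstrap and Sobolev embedding), this solution is smooth up to the boundary, in particular $\varphi\in C^{2m}(\overline\Omega)$, so $\varphi\in C^{2m}_{\mathrm{cl}}(\overline\Omega)$. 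Therefore $\int_\Omega wg\,\mathrm{d}x=0$ for all $g\in C_0^\infty(\Omega)$, which forces $w=0$ a.e.

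\textbf{Main obstacle.} The delicate point is the regularity claim in Step 3: for smooth data $g$, the Dirichlet solution must have $C^{2m}$ regularity up to the boundary. I would deduce this from the $W^{k,p}$ estimates for all $k$ and $p$ available on smooth domains, followed by Sobolev embedding. Alternatively, one can write $\varphi(x)=\int_\Omega G_\Omega(x,y)g(y)\,\mathrm{d}y$ and invoke the boundary regularity of the Green function from \cite{ExistenceGreen}. Everything else reduces to the logarithmic bound \eqref{sharp Green function estimate} and Fubini's theorem.
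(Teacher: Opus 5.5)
Your proposal is correct and follows essentially the same route as the paper: the Mayboroda--Maz'ya logarithmic bound plus Tonelli gives $u\in L^1(\Omega)$, Fubini with symmetry and Lemma~\ref{dist property2} gives the very weak formulation, and duality against Dirichlet solutions with $C_0^\infty$ right-hand sides gives uniqueness. The only cosmetic difference is that the paper constructs the auxiliary solution $\zeta_\psi$ via the Riesz representation theorem in $H_0^m(\Omega)$ and then invokes elliptic regularity up to the boundary, whereas you take existence and regularity together from the $L^p$ theory.
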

\begin{proof} We first show that the function defined by the right-hand side of
\eqref{Green representation} is well defined almost everywhere and
belongs to $L^1(\Omega)$. Since $d(x),d(y)\leq D$, \eqref{sharp Green function estimate} yields 
\[
|G_{\Omega}(x,y)|
\leq
C_{\Omega}
\log\left(
1+\frac{D}{|x-y|}
\right),
\qquad
x,y\in\Omega,\quad x\neq y.
\]
Consequently, for each fixed $y\in\Omega$, the change of variables
$z=x-y$ gives
\begin{align*}
\int_{\Omega}|G_{\Omega}(x,y)|\,\mathrm{d}x
&\leq
C_{\Omega}
\int_{\Omega-y}
\log\left(
1+\frac{D}{|z|}
\right)\,\mathrm{d}z \leq
C_{\Omega}
\int_{B_D}
\log\left(
1+\frac{D}{|z|}
\right)\,\mathrm{d}z\\
&=
C_{\Omega}|\mathds{S}^{2m-1}|
\int_0^D
r^{2m-1}
\log\left(
1+\frac{D}{r}
\right)\,\mathrm{d}r
=:C_0.
\end{align*}
The last integral is finite, namely $C_0 \in (0,+ \infty)$. Indeed, after the change of variables
$r=Ds$, we have
\[
\int_0^D
r^{2m-1}
\log\left(
1+\frac{D}{r}
\right)\,\mathrm{d}r
=
D^{2m}
\int_0^1
s^{2m-1}
\log\left(
1+\frac{1}{s}
\right)\,\mathrm{d}s
<+\infty.
\]
Moreover, the constant $C_0$ is independent of $y$. Therefore,
\[
\sup_{y\in\Omega}
\int_{\Omega}|G_{\Omega}(x,y)|\,\mathrm{d}x
\leq C_0.
\]
It now follows from Tonelli's theorem that
\begin{equation*}
\int_{\Omega}\int_{\Omega}
|G_{\Omega}(x,y)|\,|f(y)|\,\mathrm{d}y\,\mathrm{d}x
=
\int_{\Omega}
|f(y)|
\left(
\int_{\Omega}|G_{\Omega}(x,y)|\,\mathrm{d}x
\right)\mathrm{d}y
\leq
C_0\|f\|_{L^1(\Omega)}
<+\infty.
\end{equation*}
Hence, the function 
\[
u(x):=
\int_{\Omega}G_{\Omega}(x,y)f(y)\,\mathrm{d}y
\]
is well defined for a.e. $x\in\Omega$. Furthermore,
\begin{equation*}
\|u\|_{L^1(\Omega)}
=
\int_{\Omega}
\left|
\int_{\Omega}G_{\Omega}(x,y)f(y)\,\mathrm{d}y
\right|\mathrm{d}x
\leq
\int_{\Omega}\int_{\Omega}
|G_{\Omega}(x,y)|\,|f(y)|\,\mathrm{d}y\,\mathrm{d}x
\leq
C_0\|f\|_{L^1(\Omega)}.
\end{equation*}
Thus, $u\in L^1(\Omega)$.

We next verify that $u$ is a very weak solution of
\eqref{POMEGA}. Indeed, for each $\varphi\in C_{\mathrm{cl}}^{2m}(\overline{\Omega}),$ by the preceding estimate,
\begin{align*}
&\int_{\Omega}\int_{\Omega}
\left|
G_{\Omega}(x,y)f(y)(-\Delta)^m\varphi(x)
\right|\,\mathrm{d}y\,\mathrm{d}x
\leq
\|(-\Delta)^m\varphi\|_{L^{\infty}(\Omega)}
\int_{\Omega}\int_{\Omega}
|G_{\Omega}(x,y)|\,|f(y)|\,\mathrm{d}y\,\mathrm{d}x\\
&\qquad\leq
C_0
\|(-\Delta)^m\varphi\|_{L^{\infty}(\Omega)}
\|f\|_{L^1(\Omega)}
<+\infty.
\end{align*}
Therefore, Fubini's theorem, the symmetry of the Green function,
and \hyperref[dist property2]{Lemma \ref{dist property2}} yield
\begin{align*}
&\int_{\Omega}
u(x)(-\Delta)^m\varphi(x)\,\mathrm{d}x
=
\int_{\Omega}\int_{\Omega}
G_{\Omega}(x,y)f(y)(-\Delta)^m\varphi(x)
\,\mathrm{d}y\,\mathrm{d}x\\
& \qquad=
\int_{\Omega}
f(y)
\left(
\int_{\Omega}
G_{\Omega}(x,y)(-\Delta)^m\varphi(x)\,\mathrm{d}x
\right)\mathrm{d}y
=
\int_{\Omega}f(y)\varphi(y)\,\mathrm{d}y.
\end{align*}
Thus, $u$ is a very weak solution of \eqref{POMEGA}.

To establish uniqueness, suppose that there exists another very weak solution $v \in L^1(\Omega)$ of \eqref{POMEGA}. This implies that $w=u-v$ satisfies
\begin{equation}\label{uniquesol}
    \int_{\Omega} w(-\Delta)^m\varphi\,\mathrm{d}x=0, \qquad \text{ for every } \varphi \in C_{\mathrm{cl}}^{2m}(\overline{\Omega}).
\end{equation}
We consider $H_0^{m}(\Omega)$ endowed with the inner product defined by
\begin{equation*}
\langle \xi,\eta\rangle_{H_0^{m}(\Omega)}
:=
\begin{cases}
\displaystyle
\int_\Omega
(-\Delta)^k\xi\,(-\Delta)^k\eta\,\mathrm{d}x,
& \text{if } m=2k,\\[1.2em]
\displaystyle
\int_\Omega
\nabla(-\Delta)^k\xi\cdot
\nabla(-\Delta)^k\eta\,\mathrm{d}x,
& \text{if } m=2k+1.
\end{cases}
\end{equation*}
Moreover, given $\psi\in C_0^\infty(\Omega)$, we consider the linear
functional
\begin{equation*}
\mathcal{L}_\psi(\eta)
:=
\int_\Omega \psi\eta\,\mathrm{d}x,
\qquad
\eta\in H_0^m(\Omega).
\end{equation*}
By the Cauchy--Schwarz and Poincar\'e inequalities, we have
\begin{equation*}
\left|\mathcal{L}_\psi(\eta)\right|
\leq
\|\psi\|_{L^2(\Omega)}
\|\eta\|_{L^2(\Omega)}
\leq
C\|\psi\|_{L^2(\Omega)}
\|\eta\|_{H_0^{m}(\Omega)}.
\end{equation*}
Thus, $\mathcal{L}_\psi$ is a bounded linear functional on
$H_0^m(\Omega)$. By the Riesz representation theorem, there exists
a unique $\zeta_\psi\in H_0^m(\Omega)$ such that
\begin{equation}\label{Riesz polyharmonic}
\langle \zeta_\psi,\eta\rangle_{H_0^{m}(\Omega)}
=
\int_\Omega \psi\eta\,\mathrm{d}x
\qquad
 \text{ for every } \eta\in H_0^m(\Omega).
\end{equation}
Equivalently, $\zeta_\psi$ is the weak solution of
\begin{equation}\label{Riesz representation problem}
\left\{
\begin{aligned}
(-\Delta)^m\zeta_\psi&=\psi
&&\text{in }\Omega,\\
\partial_\nu^j\zeta_\psi&=0
&&\text{on }\partial\Omega,
\qquad j=0,\ldots,m-1,
\end{aligned}
\right.
\end{equation}
and, since $\Omega$ is smooth, standard elliptic estimates (see \cite[Section 2.5.2]{Gazzola}) imply  that $\zeta_{\psi} \in C^{\infty}(\overline{\Omega})$. In this way $\zeta_{\psi} \in C^{\infty}(\overline{\Omega})\cap H_0^{m}(\Omega)$, proving in particular that $\zeta_{\psi} \in C_{\mathrm{cl}}^{2m}(\overline{\Omega})$, that is, $\zeta_{\psi}$ is a classical solution of \eqref{Riesz representation problem}. Then, from \eqref{uniquesol}, it follows
\begin{equation*}
    0=\int_{\Omega} w(-\Delta)^m \zeta_{\psi}\,\mathrm{d}x=\int_{\Omega}w \psi\,\mathrm{d}x\qquad \text{ for every } \psi \in C_0^{\infty}(\Omega),
\end{equation*}
implying by the fundamental lemma of the calculus of variations (see \cite[Corollary 4.24.]{BrezisFA}) that $w=0$ almost everywhere, that is, $u=v$ in $L^1(\Omega).$
\end{proof}

Next, we recall Boggio's formula for the Green function in a ball. Following Boggio \cite{Boggioclassical}, see also \cite[Lemma 2.27]{Gazzola}, the Green function for the
polyharmonic operator \((-\Delta)^m\) in the ball \(B(x_0,R)\subset
\mathds{R}^{2m}\), with Dirichlet boundary conditions
\(
\partial_\nu^j u=0\) on \(\partial B(x_0,R)\),
\(j=0,1,\ldots,m-1\),
is the positive map $G_{B(x_0,R)}:\overline{B(x_0,R)}\times\overline{B(x_0,R)}
\to \mathds{R}\cup\{+\infty\}$ given, for \(x\neq y\), by Boggio's formula
\begin{equation}\label{Boggio integral formula}
G_{B(x_0,R)}(x,y)
=
c_m
\int_{1}^{\frac{[XY]}{|x-y|}}
\frac{(t^2-1)^{m-1}}{t^{2m-1}}\,\mathrm{d}t,
\end{equation}
where
\[
[XY]\coloneq
\frac{1}{R}
\sqrt{
|x-x_0|^2|y-x_0|^2
-2R^2\langle x-x_0,y-x_0\rangle
+R^4
}.
\]
For $m\geq2$, expanding the integrand in
\eqref{Boggio integral formula} and integrating term by term yields
\begin{equation}\label{eq:Boggio-explicit}
G_{B(x_0,R)}(x,y)
=
c_m
\left[
\log\left(\frac{[XY]}{|x-y|}\right)
+
\sum_{k=1}^{m-1}
\frac{(-1)^k}{2k}\binom{m-1}{k}
\left(
1-\left(\frac{|x-y|}{[XY]}\right)^{2k}
\right)
\right].
\end{equation}
\begin{remark}\label{case m=1 remark}
When $m=1$, the integral representation
\eqref{Boggio integral formula} reduces to
\begin{equation} \label{eq:Boggio-explicit1}
G_{B(x_0,R)}(x,y)
=
c_1
\int_1^{[XY]/|x-y|}
\frac{\mathrm{d}t}{t}
=
c_1\log\left(\frac{[XY]}{|x-y|}\right).
\end{equation}
This gives a meaning to the right-hand side of \eqref{eq:Boggio-explicit} also when $m=1$. Throughout the paper, we will use \eqref{eq:Boggio-explicit} for every $m\geq1$ (in particular, including the case $m=1$, which has to be understood in the sense of \eqref{eq:Boggio-explicit1}).
\end{remark}
When \(y=x_0\), we obtain
\begin{equation}\label{Green especial}
G_{B(x_0,R)}(x,x_0)
=
c_m
\left[
\log\left(\frac{R}{|x-x_0|}\right)
+
\sum_{k=1}^{m-1}
\frac{(-1)^k}{2k}\binom{m-1}{k}
\left(
1-\frac{|x-x_0|^{2k}}{R^{2k}}
\right)
\right].
\end{equation}
In order to derive Green-function estimates that are uniform with
respect to the pole, we shall use the following interior-to-annular
estimate for polyharmonic functions. In the harmonic case, such an
estimate follows immediately from the maximum principle. For
higher-order operators, for which no general maximum principle is
available, it will play an analogous role.

\begin{lemma} \label{interior estimate BMOMEGA}
Let \(m\geq 1\), \(R>0\), \(B(x_0,R)\subset\mathds{R}^n\), with $n \geq 2$, and let
\(v\in C^\infty(B(x_0,R))\) be such that
\[
(-\Delta)^m v=0
\quad\text{in }B(x_0,R).
\]
Then there exists a positive constant \(C=C(n,m)\) such that
\begin{equation} \label{eq:interior estimate BMOMEGA}
\|v\|_{L^\infty(B(x_0,R/2))}
\leq
C(n,m)
\|v\|_{L^\infty(B(x_0,R)\setminus B(x_0,R/2))}.
\end{equation}
\end{lemma}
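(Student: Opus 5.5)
The plan is to reduce to $x_0=0$, $R=1$ by translation and scaling. The function $v_R(z):=v(x_0+Rz)$ is again polyharmonic in $B_1$, and both sides of \eqref{eq:interior estimate BMOMEGA} are invariant under this change of variables, so the constant will depend only on $(n,m)$. We may also assume $M:=\|v\|_{L^\infty(B_1\setminus B_{1/2})}<+\infty$; otherwise there is nothing to prove. The key tool will be the polyharmonic Poisson-type representation on an intermediate sphere, combined with interior derivative estimates on the annulus.

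The representation comes from applying the Boggio Green identity \eqref{eq:polyharmonic-green-identity} on a ball $B_\rho$, with $\rho\in(1/2,1)$ fixed, say $\rho=3/4$. We take as test function the Dirichlet Green function $G_{B_\rho}(x,\cdot)$ given by \eqref{Boggio integral formula}, with $x\in B_{1/2}$. Since $(-\Delta)^m v=0$ and $G_{B_\rho}(x,\cdot)$ satisfies the homogeneous Dirichlet conditions of orders $0,\dots,m-1$ on $\partial B_\rho$, the identity gives
\[
v(x)=\sum_{j} \int_{\partial B_\rho} K_j(x,y)\,\partial^{\beta_j} v(y)\,\mathrm{d}S(y),
\]
that is, a finite sum of boundary integrals pairing derivatives of $v$ of order at most $m-1$ with kernels $K_j$. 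Each $K_j$ is built from derivatives of $G_{B_\rho}(x,y)$ of order at most $2m-1$ in $y$. This step requires a standard excision of a small ball around $x$, where the fundamental solution reproduces $v(x)$. Equivalently, one may use the known polyharmonic Poisson kernels for the ball, as in \cite{Gazzola}. The point to record is that $x\in B_{1/2}$ and $y\in\partial B_\rho$ satisfy $|x-y|\ge 1/4$. Since Boggio's explicit formula is smooth away from the diagonal, every kernel $K_j$ is then bounded by a constant depending only on $(n,m)$.

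It remains to bound the boundary data on $\partial B_\rho$ using only $M$. Every point of $\partial B_{3/4}$ is the centre of a ball of radius $1/8$ contained in the annulus $B_1\setminus B_{1/2}$. On such a ball $v$ is polyharmonic, so the classical interior derivative estimates for solutions of $(-\Delta)^m v=0$ apply: $|\partial^\beta v(y)|\le C(n,m)\,r^{-|\beta|}\|v\|_{L^\infty(B(y,r))}$. These follow, for instance, from interior Schauder or $L^p$ estimates, or from the mean-value/Pizzetti-type representation of polyharmonic functions. They give $|\partial^\beta v|\le C(n,m)M$ on $\partial B_{3/4}$ for all $|\beta|\le m-1$. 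Inserting this into the representation yields $|v(x)|\le C(n,m)M$ for $x\in B_{1/2}$, which is the claim.

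The main obstacle is making the representation step rigorous with uniform kernel bounds. Here I would rely on Boggio's explicit formula \eqref{eq:Boggio-explicit}, which for a fixed radius $\rho=3/4$ makes all the needed derivatives explicitly bounded on $\{|x|\le1/2,\ |y|=3/4\}$. I would also use that $v$ is smooth up to $\partial B_\rho$, because $\overline{B_\rho}\subset B_1$. The case $m=1$ is simply the maximum principle.
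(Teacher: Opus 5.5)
Your argument is correct, but after the common first step it follows a different route from the paper.

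\textbf{Common step.} Both proofs normalize to $B_1$. Both then use interior regularity on small balls centred on $\partial B_{3/4}$ and contained in the annulus to bound derivatives of $v$ on $\partial B_{3/4}$ by $CM$.

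\textbf{Your route.} You represent $v$ in $B_{1/2}$ through the Dirichlet Green function of $B_{3/4}$, via Green's identity plus excision (equivalently, polyharmonic Poisson kernels). This needs only the Dirichlet data $\partial^\beta v$, $|\beta|\le m-1$, on the sphere. The bound then follows from the boundedness of the kernels for $|x|\le 1/2$, $|y|=3/4$.

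\textbf{The paper's route.} It treats the problem on $B_{3/4}$ as a Navier problem. It bounds $w_j=(-\Delta)^j v$, $j=0,\dots,m-1$, on $\partial B_{3/4}$; these are derivatives up to order $2m-2$. It then runs a descending induction starting from the harmonic $w_{m-1}$. Each step applies only the second-order maximum principle and the explicit barrier $q(x)=(\rho^2-|x|^2)/(2n)$ to $-\Delta w_j=w_{j+1}$.

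\textbf{Comparison.} The paper's argument is more elementary. It needs no Green function of $(-\Delta)^m$ and no excision argument, and it works verbatim in every dimension $n\ge 2$. That suits the lemma's role as a substitute for the missing higher-order maximum principle. Your argument yields an explicit representation formula, which would also control derivatives of $v$ in $B_{1/2}$. However, it depends on the Green function of $(-\Delta)^m$ in a ball of $\mathds{R}^n$.

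\textbf{A citation caveat.} The formulas \eqref{Boggio integral formula} and \eqref{eq:Boggio-explicit} you cite cover only the case $n=2m$. For the lemma as stated, with arbitrary $n\ge 2$, you should invoke Boggio's general formula \cite[Lemma 2.27]{Gazzola}. Its kernel is, up to a constant, $|x-y|^{2m-n}\int_1^{[XY]/|x-y|}(t^2-1)^{m-1}t^{1-n}\,\mathrm{d}t$. This is also smooth off the diagonal, with $[XY]>0$ for $|x|\le 1/2$ and $|y|$ near $3/4$, so your kernel bounds and the rest of the argument go through unchanged.
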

\begin{proof}
First, we establish an elementary estimate that will be used in the
iteration argument to prove \eqref{eq:interior estimate BMOMEGA}. Let $\rho>0$ and suppose that
\(
z\in C^2(B_\rho)\cap C(\overline{B_\rho})
\)
satisfies
\(
|\Delta z|\leq K\) in
\(B_\rho
\)
for some $K\geq0$. We claim that
\begin{equation}\label{internal-barrier-estimate}
\|z\|_{L^\infty(B_\rho)}
\leq
\|z\|_{L^\infty(\partial B_\rho)}
+
\frac{\rho^2}{2n}K.
\end{equation}
Indeed, to simplify notation, let
\(
M_{\partial}
:=
\|z\|_{L^\infty(\partial B_\rho)}
\)
and let us define
\[
q(x):=
\frac{\rho^2-|x|^2}{2n},
\qquad x\in B_\rho.
\]
Then
\[
q\geq0
\quad\text{in }B_\rho,
\qquad
q=0
\quad\text{on }\partial B_\rho,
\qquad
\Delta q=-1
\quad\text{in }B_\rho.
\]
Consider
\(
\Psi_+:=z-M_{\partial}-Kq
\).
Since $\Delta z\geq-K$ in $B_\rho$, we obtain
\(
\Delta\Psi_+
=
\Delta z+K
\geq0\) in \(B_\rho
\). Moreover, by the definition of $M_\partial$,
\(
\Psi_+\leq0
\) on \(\partial B_\rho
\).
The classical maximum principle therefore gives
\(
\Psi_+\leq0\) in \(B_\rho\),
and hence
\[
z\leq M_{\partial}+Kq
\qquad\text{in }B_\rho.
\]
Similarly, consider
\(
\Psi_-:=-z-M_{\partial}-Kq
\).
Since $\Delta z\leq K$ in $B_\rho$, it follows that
\(
\Delta\Psi_-
=
-\Delta z+K
\geq0
\) in \(B_\rho\). Furthermore,
\(
\Psi_-\leq0\) on \(\partial B_\rho\).
Another application of the maximum principle gives
\[
-z\leq M_{\partial}+Kq
\qquad\text{in }B_\rho.
\]
Combining the preceding inequalities, we find
\[
|z(x)|
\leq
M_{\partial}+Kq(x)
\leq
M_{\partial}+\frac{\rho^2}{2n}K,
\qquad x\in B_\rho.
\]
This proves \eqref{internal-barrier-estimate}.

Next, we return to the proof of \eqref{eq:interior estimate BMOMEGA}. In view of the invariance properties of polyharmonic functions and the $L^\infty$-norm with respect to translations and dilations of the spatial variable, it suffices to prove \eqref{eq:interior estimate BMOMEGA} in the case $x_0=0$ and $R=1$.

Set
\[
M:=
\|v\|_{L^\infty(B_1\setminus B_{1/2})}.
\]
If $M=+\infty$, there is nothing to prove. Therefore, we may
assume that $M<+\infty$.

Consider the annulus
\[
A:=
\left\{
x\in\mathds{R}^{n}:
\frac{2}{3}<|x|<\frac{5}{6}
\right\}.
\]
Since
\(
\partial B_{3/4}
\subset\subset
A
\subset\subset
B_1\setminus B_{1/2}\),
we have
\[
\|v\|_{L^\infty(A)}
\leq
\|v\|_{L^\infty(B_1\setminus B_{1/2})}
=
M.
\]
Fix $p:=n+1$ and choose $\eta>0$ such that
\(
B(y,2\eta)\subset\subset A\) for every
\(y\in\partial B_{3/4}\).
Since $(-\Delta)^m v=0$ in $B_1$ and $\eta>0$ is fixed, the
local estimates for elliptic equations of order $2m$ (\cite[Theorem 2.20]{Gazzola}) yield
\[
\|v\|_{W^{2m,p}(B(y,\eta))}
\leq
C(n,m)
\|v\|_{L^1(B(y,2\eta))}
\quad
\forall\,y\in\partial B_{3/4}.
\]
Here, the constant $C$ depends on the fixed radius $\eta$ but is independent of the center $y$. Moreover, since
\(
B(y,2\eta)\subseteq A\subseteq B_1\setminus B_{1/2}\),
we obtain
\begin{equation*}
\|v\|_{W^{2m,p}(B(y,\eta))}
\leq
C(n,m)
\|v\|_{L^1(B(y,2\eta))}
\leq
C(n,m)|B_{2\eta}|
\|v\|_{L^\infty(B(y,2\eta))}
\leq
C(n,m)M.
\end{equation*}
Since $p=n+1>n$, Morrey's inequality gives
\[
W^{2m,p}(B(y,\eta))
\hookrightarrow
C^{2m-1,\gamma}(\overline{B(y,\eta)}),
\qquad
\gamma:=1-\frac{n}{p}>0.
\]
Therefore,
\[
\|v\|_{C^{2m-1,\gamma}(\overline{B(y,\eta)})}
\leq
C(n,m)M
\quad
\forall\,y\in\partial B_{3/4}.
\]
For every $j=0,1,\ldots,m-1$, the function
$(-\Delta)^jv$ is a linear combination of derivatives of $v$
of order $2j$, and $2j\leq 2m-2$. Consequently,
\[
|(-\Delta)^jv(y)|
\leq
C(n,m)M,
\qquad
y\in\partial B_{3/4},
\qquad
j=0,1,\ldots,m-1.
\]
Taking the supremum over $y\in\partial B_{3/4}$, we conclude that
\begin{equation}\label{boundary-estimate-polyharmonic}
\|(-\Delta)^jv\|_{L^\infty(\partial B_{3/4})}
\leq
C(n,m)M,
\qquad
j=0,1,\ldots,m-1.
\end{equation}

Finally, we prove \eqref{eq:interior estimate BMOMEGA} via an iteration argument, applying the above estimates \eqref{internal-barrier-estimate} and \eqref{boundary-estimate-polyharmonic} recursively. In what follows, $C>0$ denotes a
constant which may change from line to line and depends only on
$n$ and $m$. Define
\[
w_j:=(-\Delta)^jv,
\qquad
j=0,1,\ldots,m-1.
\]
Since $v\in C^\infty(B_1)$, each $w_j$ belongs to $C^\infty(\overline{B_{1/2}})$. Moreover,
\[
-\Delta w_j=w_{j+1} \quad \text{ in } \quad B_1,
\qquad
j=0,1,\ldots,m-2,
\]
and
\[
-\Delta w_{m-1}
=
(-\Delta)^m v
=
0
\qquad\text{in }B_1.
\]
Thus, in particular, $w_{m-1}$ is harmonic in $B_{3/4}$. By the classical
maximum principle and \eqref{boundary-estimate-polyharmonic},
\begin{equation} \label{eq:int-est-hyp}
\|w_{m-1}\|_{L^\infty(B_{3/4})}
\leq
\|w_{m-1}\|_{L^\infty(\partial B_{3/4})}
\leq
CM.
\end{equation}

If $m=1$, then $w_{m-1}=w_0=v$, and therefore
\[
\|v\|_{L^\infty(B_{1/2})} \leq \|v\|_{L^\infty(B_{3/4})}
\leq CM,
\]
which is the desired estimate \eqref{eq:interior estimate BMOMEGA}.

From now on we assume that $m\geq2$. Let
\(
j\in\{0,1,\ldots,m-2\}
\)
and suppose that
\begin{equation} \label{eq:int-est-rep-hyp}
\|w_{j+1}\|_{L^\infty(B_{3/4})}
\leq
CM.
\end{equation}
Since
\(
-\Delta w_j=w_{j+1}\) in
 \(B_{3/4}
\),
we have
\(
|\Delta w_j|
=
|w_{j+1}|
\leq
CM\) in \(B_{3/4}\).
Applying \eqref{internal-barrier-estimate} with $z=w_j$ and
$\rho=3/4$, together with \eqref{boundary-estimate-polyharmonic} and the assumption \eqref{eq:int-est-rep-hyp},  we obtain
\[
\|w_j\|_{L^\infty(B_{3/4})}
\leq
\|w_j\|_{L^\infty(\partial B_{3/4})}
+
\frac{(3/4)^2}{2n}
\|w_{j+1}\|_{L^\infty(B_{3/4})}\\
\leq
CM.
\]
We complete the proof of \eqref{eq:interior estimate BMOMEGA} by descending induction, since the above argument can be repeated at each step. Indeed, for $j=m-2$, \eqref{eq:int-est-hyp} holds; hence \eqref{eq:int-est-rep-hyp} is satisfied and the above argument yields
\[
\|w_{m-2}\|_{L^\infty(B_{3/4})}
\leq
CM.
\]
Iterating this procedure down to $j=0$, we conclude that
\[
\|w_j\|_{L^\infty(B_{3/4})}
\leq
CM,
\qquad
j=0,1,\ldots,m-1.
\]
In particular, since $w_0=v$,
\[
\|v\|_{L^\infty(B_{1/2})}
\leq\|v\|_{L^\infty(B_{3/4})}
\leq
CM.
\]

\end{proof}

We complete this section by establishing the Green-function estimates that will be used
throughout the paper. The first two estimates are uniform with respect
to the pole, whereas the last one concerns the behavior near each
fixed pole. As a direct consequence of the latter,
Green profiles corresponding to distinct poles can be separated
in a neighborhood of either pole.

\begin{lemma}\label{lem:uniform-green-estimates}
Let $\Omega\subset\mathds{R}^{2m}$ be a smooth bounded domain. There exists a constant $C=C(m,\Omega)>0$ such that, for every
$y\in\Omega$,

\begin{equation}\label{faraway pole}
|G_\Omega(x,y)|
\leq C
\quad
\forall\,
x\in
\Omega\setminus B\left(y,\frac{d(y)}4\right),
\end{equation}
and
\begin{equation}\label{Green omega to ball}
|G_\Omega(x,y)|
\leq
C+c_m\log\frac{d(y)/2}{|x-y|}
\quad
\forall\,
x\in B\left(y,\frac{d(y)}4\right)\setminus\{y\}.
\end{equation}

Moreover, for every fixed $y\in\Omega$, there exist constants
$r_y\in(0,d(y)/4)$ and $C_y>0$ such that
\begin{equation}\label{inferior green estimate}
G_\Omega(x,y)
\geq
c_m\log\frac{D}{|x-y|}-C_y>0
\quad
\forall\,
x\in B(y,r_y)\setminus\{y\}.
\end{equation}
In particular,
\begin{equation} \label{eq:GreenFunctLarge}
G_\Omega(x,y)\longrightarrow+\infty
\qquad\text{as }x\longrightarrow y.
\end{equation}
Consequently, if $y_0,y_1\in\Omega$ are distinct, then there
exists $\rho>0$ such that
\begin{equation}
\label{Green pole separation}
|G_\Omega(x,y_1)|>|G_\Omega(x,y_0)|
\quad
\forall\,
x\in B(y_1,\rho)\setminus\{y_1\}.
\end{equation}
\end{lemma}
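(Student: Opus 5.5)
Estimate \eqref{faraway pole} follows directly from the Mayboroda--Maz'ya bound \eqref{sharp Green function estimate}. If $|x-y|\geq d(y)/4$, then $\min\{d(x),d(y)\}\leq d(y)$, so
\[
|G_\Omega(x,y)|\leq C_\Omega\log\Bigl(1+\tfrac{d(y)}{d(y)/4}\Bigr)=C_\Omega\log 5 .
\]
This constant does not depend on the pole $y$.

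For \eqref{Green omega to ball}, write $G_\Omega(\cdot,y)=\Phi(\cdot-y)-H_{\Omega,y}$ locally, but compare with the ball Green function in order to keep the exact coefficient $c_m$. Set $B_y:=B(y,d(y)/2)\subset\Omega$ and
\[
v(x):=G_\Omega(x,y)-G_{B_y}(x,y), \qquad x\in B_y .
\]
Both functions have the same singularity $c_m\log(1/|x-y|)$ at $y$, so $v$ extends smoothly and satisfies $(-\Delta)^m v=0$ in $B_y$. By \eqref{Green especial}, on the annulus $B_y\setminus B(y,d(y)/4)$ we have $|G_{B_y}(x,y)|\leq c_m\log 2+C(m)$. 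By \eqref{faraway pole}, $|G_\Omega(x,y)|\leq C$ there. Hence $\|v\|_{L^\infty(B_y\setminus B(y,d(y)/4))}\leq C$, uniformly in $y$.

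Lemma \ref{interior estimate BMOMEGA}, applied with $R=d(y)/2$ and $n=2m$, then gives $\|v\|_{L^\infty(B(y,d(y)/4))}\leq C$. On $B(y,d(y)/4)$, formula \eqref{Green especial} shows
\[
G_{B_y}(x,y)=c_m\log\frac{d(y)/2}{|x-y|}+O(1),
\]
with the $O(1)$ depending only on $m$, and this leading term is positive. Combining the two bounds yields \eqref{Green omega to ball}. The main obstacle is exactly this step: keeping the sharp coefficient $c_m$ uniformly in $y$ without a maximum principle. Lemma \ref{interior estimate BMOMEGA} is designed to replace the maximum principle here.

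For \eqref{inferior green estimate}, fix $y$ and use the same decomposition. We have $G_\Omega(x,y)=G_{B_y}(x,y)+v(x)\geq c_m\log\frac{d(y)/2}{|x-y|}-C$, so
\[
G_\Omega(x,y)\geq c_m\log\frac{D}{|x-y|}-C_y, \qquad C_y:=C+c_m\log\frac{2D}{d(y)} .
\]
Alternatively one can use the smoothness of $H_{\Omega,y}$ directly, since $G_\Omega(x,y)=\Phi(x-y)-H_{\Omega,y}(x)$ and $H_{\Omega,y}$ is bounded near $y$ by symmetry. Then choose $r_y<d(y)/4$ small enough that $c_m\log(D/r_y)>C_y$; this gives positivity on $B(y,r_y)\setminus\{y\}$. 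The divergence \eqref{eq:GreenFunctLarge} is then immediate.

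Finally, for \eqref{Green pole separation}, take $y_0\neq y_1$. The function $x\mapsto G_\Omega(x,y_0)$ is continuous away from $y_0$, since it equals $\Phi(x-y_0)$ minus a smooth function. It is therefore bounded on $B(y_1,|y_1-y_0|/2)$. Since $G_\Omega(x,y_1)\to+\infty$ as $x\to y_1$, we can choose $\rho$ small enough that $G_\Omega(x,y_1)>\sup_{B(y_1,|y_1-y_0|/2)}|G_\Omega(\cdot,y_0)|$ on $B(y_1,\rho)\setminus\{y_1\}$, which gives the strict inequality.
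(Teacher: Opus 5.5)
Your proposal is correct and follows the paper's proof essentially step by step: Mayboroda--Maz'ya for \eqref{faraway pole}, comparison with Boggio's Green function of $B(y,d(y)/2)$ plus Lemma~\ref{interior estimate BMOMEGA} for \eqref{Green omega to ball}, and boundedness of $G_\Omega(\cdot,y_0)$ near $y_1$ against the blow-up of $G_\Omega(\cdot,y_1)$ for \eqref{Green pole separation}. The only variations are minor: you derive \eqref{inferior green estimate} from the same two-sided ball comparison (so $C_y$ depends on $y$ only through $d(y)$) rather than from continuity of $H_{\Omega,y}$ near $y$, and you justify polyharmonicity of $v$ by cancelling the logarithmic singularities explicitly rather than via the distributional identity and hypoellipticity; one small repair is needed in the last step, since $B(y_1,|y_1-y_0|/2)$ need not lie in $\Omega$, so ``continuous hence bounded'' should be applied on a ball compactly contained in $\Omega$ (as the paper does) or replaced by \eqref{sharp Green function estimate}, which gives $|G_\Omega(x,y_0)|\leq C_\Omega\log(1+2D/|y_1-y_0|)$ for $|x-y_0|\geq|y_1-y_0|/2$, together with the choice $\rho\leq|y_1-y_0|/2$.
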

\begin{proof} We begin with the proof of \eqref{faraway pole}. Let $y \in \Omega$. We observe that $|x-y|\geq d(y)/4$ for every $x \in \Omega\setminus B(y,d(y)/4),$ and directly from \eqref{sharp Green function estimate} we see that $|G_{\Omega}(x,y)|\leq C_{\Omega}\log 5$ for every $x \in \Omega\setminus B(y,d(y)/4)$.

Let us now prove \eqref{Green omega to ball}. To this end, let us fix \(y\in\Omega\) and define
\begin{equation*}
    W_y(x)\coloneq G_{\Omega}(x,y)-G_{B(y,d(y)/2)}(x,y), \quad x \in B(y,d(y)/2).
\end{equation*}

We claim that $(-\Delta)^{m}_x W_y(x)=0$ in $B(y,d(y)/2)$ in the classical sense, for each $y \in \Omega$. Indeed, considering $\varphi \in C_0^{\infty}(B(y,d(y)/2))$, by the construction and symmetry of the Green function, we know that
\begin{equation}\label{GreenBall}
    \int_{B(y,d(y)/2)} G_{B(y,d(y)/2)}(x,y)(-\Delta)^{m}\varphi(x)\,\mathrm{d}x = \varphi(y).
\end{equation}
Extending \(\varphi\) by zero outside \(B(y,d(y)/2)\), we obtain \(\varphi\in C_0^\infty(\Omega)\). Thus, again by the symmetry of the Green function,
\begin{equation}\label{GreenOmega}
    \int_{B(y,d(y)/2)} G_{\Omega}(x,y)(-\Delta)^m\varphi(x)\,\mathrm{d}x=\int_{\Omega} G_{\Omega}(x,y)(-\Delta)^m\varphi(x)\,\mathrm{d}x=\varphi(y).
\end{equation}
Now, subtracting \eqref{GreenBall} from \eqref{GreenOmega}, we conclude that 
\begin{equation*}
    \int_{B(y,d(y)/2)} W_y(x)(-\Delta)^m\varphi(x)\,\mathrm{d}x=0,
\end{equation*}
that is, $(-\Delta)^{m}_x W_y(x)=0$ in $B(y,d(y)/2)$ in the sense of distributions. Next, since $(-\Delta)^m$ is an elliptic operator with $C^{\infty}$ coefficients, by  \cite[Corollary 6.34]{FollandPDE}, we see that $(-\Delta)^m$ is a hypoelliptic operator, implying that $W_y \in C^{\infty}(B(y,d(y)/2))$ for each $y \in \Omega$, concluding the proof of our claim.

We also claim that $\lVert W_y\rVert_{L^{\infty}(A_y)}\leq C$, where $A_y\coloneq B(y,d(y)/2)\setminus B(y,d(y)/4)$ and $C$ is independent of $y \in \Omega$. Indeed, from \eqref{faraway pole}, we already know that $|G_{\Omega}(x,y)|\leq C$ in $A_{y} \subseteq \Omega \setminus B(y, d(y)/4)$. We can now estimate
$G_{B(y,d(y)/2)}(x,y)$ in $A_y.$ From the definition of $A_y$,
for every $x\in A_y$, we have
\begin{equation*}
\frac{d(y)}{4}
\leq
|x-y|
<
\frac{d(y)}{2}, \quad \text{ or equivalently } \quad
\frac{1}{2}
\leq
\frac{2|x-y|}{d(y)}
<1.
\end{equation*}
Consequently,
\begin{equation*}
0
<
\log\frac{d(y)/2}{|x-y|}
\leq
\log 2,
\end{equation*}
and, for every $k=1,\ldots,m-1$,
\begin{equation*}
0
<
1-
\left(
\frac{2|x-y|}{d(y)}
\right)^{2k}
\leq
1-\frac{1}{2^{2k}}
\leq 1.
\end{equation*}
Therefore, by \eqref{Green especial} and the positivity of Boggio's
Green function, we obtain
$$
0
\leq
G_{B(y,d(y)/2)}(x,y)
\leq
c_m
\left[
\log 2
+
\sum_{k=1}^{m-1}
\frac{1}{2k}
\binom{m-1}{k}
\right]
=:C(m),
\qquad x\in A_y.
$$
In particular,
\begin{equation*}
\left\|
G_{B(y,d(y)/2)}(\cdot,y)
\right\|_{L^\infty(A_y)}
\leq
C(m),
\end{equation*}
where the constant $C(m)$ is independent of $y\in\Omega$,
from which our second claim follows. Since $W_y$ is polyharmonic in $B(y,d(y)/2)$ for each $y \in \Omega$, we can combine \hyperref[interior estimate BMOMEGA]{Lemma \ref{interior estimate BMOMEGA}} with the preceding claim to obtain
\begin{equation*}
    \lVert W_y\rVert_{L^{\infty}(B(y,d(y)/4))}\leq C\lVert W_y\rVert_{L^{\infty}(A_y)}\leq C.
\end{equation*}
Finally, for every $x \in B(y,d(y)/4)$ we have
$$
    |G_{\Omega}(x,y)|\leq G_{B(y,d(y)/2)}(x,y)+|W_y(x)|
\leq c_m\log \frac{d(y)/2}{|x-y|}+C.
$$

Next, we prove \eqref{inferior green estimate}. Using the symmetry of Green's function, that is $G_{\Omega}(x,y)=G_{\Omega}(y,x)$, we can write 
$$
    G_{\Omega}(x,y)=\Phi(x-y)-H_{\Omega,y}(x)
    =c_m\log\frac{D}{|x-y|}-H_{\Omega,y}(x),
$$
where \(H_{\Omega,y}\) denotes the regular part of the Green function. More precisely, for each fixed \(y\in\Omega\), the map \(x\mapsto H_{\Omega,y}(x)\) is smooth in \(\Omega\) and solves the corresponding problem \eqref{auxiliary}.

Choose $0<r_0<\min\{D,d(y)\}$ so that $\overline{B(y,r_0)}\subseteq\Omega$. Since
$H_{\Omega,y}$ is continuous on $\overline{B(y,r_0)}$, there exists
$C_y>0$ such that
\[
|H_{\Omega,y}(x)|\leq C_y
\quad
\forall\,x\in B(y,r_0).
\]
We may then choose
\[
r_y\in\left(0,\min\left\{r_0,\frac{d(y)}4\right\}\right)
\]
sufficiently small so that
\[
c_m\log\frac{D}{r_y}>C_y.
\]
Consequently, for every $x\in B(y,r_y)\setminus\{y\}$,
\[
G_\Omega(x,y)
\geq
c_m\log\frac{D}{|x-y|}-C_y
>0.
\]
Since the logarithmic term tends to $+\infty$ as $x\to y$, it
also follows that
\[
G_\Omega(x,y)\longrightarrow+\infty
\qquad\text{as }x\longrightarrow y.
\]
Finally, let $y_0,y_1\in\Omega$ be distinct and choose $\rho_0>0$
such that $\overline{B(y_1,\rho_0)}\subset\subset\Omega$ and
$y_0\notin\overline{B(y_1,\rho_0)}$. Since the map
$x\mapsto G_\Omega(x,y_0)$ is continuous on
$\overline{B(y_1,\rho_0)}$, there exists $C>0$ such that
$|G_\Omega(x,y_0)|\leq C$ for every $x\in B(y_1,\rho_0)$. On the
other hand, $G_\Omega(x,y_1)\to+\infty$ as $x\to y_1$. Hence there
exists $\rho\in(0,\rho_0)$ such that
$|G_\Omega(x,y_1)|>C\geq|G_\Omega(x,y_0)|$ for every
$x\in B(y_1,\rho)\setminus\{y_1\}$. This proves
\eqref{Green pole separation}.
\end{proof}
\section{The Brezis--Merle inequality in the $L^1$ setting and sharpness of its critical exponent} \label{section3-}

The purpose of this section is to prove that the left-hand side in \eqref{sharp BM} is uniformly bounded for any exponent $\alpha$ in the range $(0,\Lambda_m)$ and to establish the sharpness of the exponent $\Lambda_m$; the optimality of the right-hand side in \eqref{sharp BM} will be addressed in \hyperref[section3]{Section \ref{section3}}. In view of the Jensen reduction \eqref{Jensen upper bound}, the main task is then to bound $J_{\Omega,\alpha}$ uniformly with respect to its pole. This is achieved by comparing $G_\Omega(\cdot,y)$ locally with the Green function of a ball centered at $y$, while controlling the resulting polyharmonic remainder. Finally, the logarithmic singularity of $G_\Omega$, together with $L^1$-data concentrating at a point, shows that the parameter range in \eqref{sharp BM} cannot be enlarged.

Let $f\in L^1(\Omega)\setminus\{0\}$, and let $u$ be the corresponding
very weak solution of \eqref{POMEGA}.  Combining the Green representation \eqref{Green representation} with \eqref{Jensen upper bound}, the desired uniform boundedness reduces to proving that for every 
$\alpha\in(0,\Lambda_m)$,
\begin{equation*}
\sup_{y\in\Omega}J_{\Omega,\alpha}(y)
=
\sup_{y\in\Omega}
\int_\Omega e^{\alpha|G_\Omega(x,y)|}\,\mathrm{d}x
<+\infty.
\end{equation*}
This is an immediate consequence of the next theorem, whose proof relies on
\hyperref[lem:uniform-green-estimates]{Lemma \ref{lem:uniform-green-estimates}}.

\begin{theorem}\label{exp uniform G integrability}
There exists a constant $C=C(m,\Omega)>0$ such that, for every
$\alpha\in(0,\Lambda_m)$ and every $y\in\Omega$, we have
\begin{equation}\label{int exp near pole}
\int_{B(y,d(y)/4)}
e^{\alpha|G_\Omega(x,y)|}\,\mathrm{d}x
\leq
C\frac{d(y)^{2m}}{\Lambda_m-\alpha}
\end{equation}
and
\begin{equation}\label{int exp faraway pole}
\int_{\Omega\setminus B(y,d(y)/4)}
e^{\alpha|G_\Omega(x,y)|}\,\mathrm{d}x
\leq C.
\end{equation}
Consequently, for every
$\alpha\in(0,\Lambda_m)$ and every $y\in\Omega$, we have
\begin{equation}\label{Jalpha and distance to the boundary}
J_{\Omega,\alpha}(y)
=
\int_\Omega e^{\alpha|G_\Omega(x,y)|}\,\mathrm{d}x
\leq
C+C\frac{d(y)^{2m}}{\Lambda_m-\alpha}.
\end{equation}
In particular, for every fixed $\alpha\in(0,\Lambda_m)$,
\begin{equation}\label{uniform-J-bound}
\sup_{y\in\Omega}J_{\Omega,\alpha}(y)<+\infty.
\end{equation}
\end{theorem}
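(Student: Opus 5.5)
The plan is to split $\Omega$ into the ball $B(y,d(y)/4)$ around the pole and its complement. On each piece we invoke the corresponding estimate of Lemma \ref{lem:uniform-green-estimates} and integrate the resulting explicit majorant. All constants below depend only on $m$ and $\Omega$, never on $y$ or $\alpha$.

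\emph{Far region.} By \eqref{faraway pole}, $|G_\Omega(x,y)|\le C$ for $x\in\Omega\setminus B(y,d(y)/4)$. Since $\alpha<\Lambda_m$, this gives
\[
\int_{\Omega\setminus B(y,d(y)/4)} e^{\alpha|G_\Omega(x,y)|}\,\mathrm{d}x\le e^{\Lambda_m C}|\Omega|,
\]
which is \eqref{int exp faraway pole} after renaming the constant.

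\emph{Near region.} By \eqref{Green omega to ball}, for $x\in B(y,d(y)/4)\setminus\{y\}$ we have
\[
e^{\alpha|G_\Omega(x,y)|}\le e^{\Lambda_m C}\left(\frac{d(y)/2}{|x-y|}\right)^{\alpha c_m}.
\]
Passing to polar coordinates centred at $y$ and writing $\beta:=\alpha c_m<2m$ (by \eqref{eq:critical-constants}), we get
\[
\int_{B(y,d(y)/4)}\left(\frac{d(y)/2}{|x-y|}\right)^{\beta}\mathrm{d}x
=|\mathds{S}^{2m-1}|\,(d(y)/2)^{\beta}\frac{(d(y)/4)^{2m-\beta}}{2m-\beta}
=|\mathds{S}^{2m-1}|\,\frac{d(y)^{2m}\,2^{-\beta}4^{\beta-2m}}{2m-\beta}.
\]
The factor $2^{-\beta}4^{\beta-2m}$ is bounded by an absolute constant depending only on $m$, because $0<\beta<2m$. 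Moreover $2m-\beta=c_m(\Lambda_m-\alpha)$. Combining these facts yields \eqref{int exp near pole} with a constant independent of $\alpha$ and $y$.

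\emph{Conclusion.} Adding the two bounds gives \eqref{Jalpha and distance to the boundary}. Since $d(y)\le D$, we obtain $\sup_y J_{\Omega,\alpha}(y)\le C+CD^{2m}/(\Lambda_m-\alpha)<\infty$ for each fixed $\alpha$, which is \eqref{uniform-J-bound}.

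There is no genuine obstacle here: the analytic work was already done in Lemma \ref{lem:uniform-green-estimates}, where the exact coefficient $c_m$ was preserved in \eqref{Green omega to ball}. The only point requiring care is bookkeeping, namely that the prefactor $e^{\alpha C}$ and the powers of $2$ coming from $\beta$ are bounded uniformly for $\alpha\in(0,\Lambda_m)$. This holds because both are controlled by their values at $\alpha=\Lambda_m$.
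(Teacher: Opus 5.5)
Your proposal is correct and follows essentially the same route as the paper. You split the domain at $B(y,d(y)/4)$, apply \eqref{faraway pole} and \eqref{Green omega to ball} from Lemma \ref{lem:uniform-green-estimates}, integrate the power majorant in polar coordinates, and use $2m-\alpha c_m=c_m(\Lambda_m-\alpha)$ together with $d(y)\le D$.
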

\begin{proof}
Fix $y \in \Omega$. By \eqref{Green omega to ball}, one has
\[
e^{\alpha|G_\Omega(x,y)|}
\leq
e^{\alpha C}
\left(\frac{d(y)/2}{|x-y|}\right)^{\alpha c_m}
\qquad
\forall x \in B(y,d(y)/4) \setminus \{y\}.
\]
Therefore
\begin{align*}
\int_{B(y,d(y)/4)}
e^{\alpha|G_\Omega(x,y)|}\,\mathrm{d}x
&\leq
C d(y)^{\alpha c_m}
\int_0^{d(y)/4} r^{2m-1-\alpha c_m}\,\mathrm{d}r\\
&=
C d(y)^{\alpha c_m}
\frac{(d(y)/4)^{2m-\alpha c_m}}
     {2m-\alpha c_m}
\leq
C
\frac{
d(y)^{2m}
}{
\Lambda_m-\alpha
},
\end{align*}
where we also used \eqref{eq:critical-constants}, which gives
\(
2m-\alpha c_m=c_m(\Lambda_m-\alpha)>0
\)
and 
\(
4^{-(2m-\alpha c_m)}\leq1
\).

On the other hand, \eqref{faraway pole}  and $\alpha<\Lambda_m$ give
\[
\int_{\Omega\setminus B(y,d(y)/4)}
e^{\alpha|G_\Omega(x,y)|}\,\mathrm{d}x
\leq
e^{\Lambda_m C}|\Omega|.
\]

The above two estimates yield \eqref{int exp near pole} and \eqref{int exp faraway pole}, respectively. Combining them, we obtain the asserted bound \eqref{Jalpha and distance to the boundary} for
$J_{\Omega,\alpha}(y)$. Finally, since $d(y)\leq D$, the uniform boundedness \eqref{uniform-J-bound}
of $J_{\Omega,\alpha}$ follows for every fixed
$\alpha\in(0,\Lambda_m)$.
\end{proof}

Combining
\hyperref[exp uniform G integrability]{Theorem
\ref{exp uniform G integrability}} with the Jensen estimate
\eqref{Jensen upper bound}, we conclude that the left-hand side of
\eqref{sharp BM} is uniformly bounded throughout the subcritical
range $(0,\Lambda_m)$. This proves the subcritical boundedness asserted in
\hyperref[main sharp BM]{Theorem \ref{main sharp BM}}.

\begin{remark}
For each fixed datum $f\in L^1(\Omega)$, the corresponding very weak
solution $u$ of \eqref{POMEGA} satisfies
\[
e^{k|u|}\in L^1(\Omega)
\quad\forall\,k>0.
\]
Indeed, by the truncation argument used in Corollary~1 of
\cite{BM}, one may decompose $f$ into a bounded part and a remainder
with arbitrarily small $L^1$-norm, and then apply the preceding
subcritical estimate to the solution corresponding to the
remainder.
\end{remark}

Next, we prove that the critical exponent $\Lambda_m$ in
\eqref{sharp BM} cannot be enlarged. The argument combines the
divergence of $J_{\Omega,\alpha}(y)$ for $\alpha\geq\Lambda_m$ with normalized
$L^1$-data concentrating at $y$.
\begin{lemma}\label{32pi2 sharpness lemma} For any $\alpha \geq \Lambda_m$ and $y \in \Omega$, we have $J_{\Omega,\alpha}(y)=+\infty.$
\end{lemma}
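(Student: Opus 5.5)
The plan is to use the lower bound \eqref{inferior green estimate} from Lemma~\ref{lem:uniform-green-estimates}, which holds on a small ball around the fixed pole. Fix $y\in\Omega$ and $\alpha\geq\Lambda_m$. That lemma provides $r_y\in(0,d(y)/4)$ and $C_y>0$ such that
\[
G_\Omega(x,y)\geq c_m\log\frac{D}{|x-y|}-C_y>0\qquad\forall\,x\in B(y,r_y)\setminus\{y\}.
\]
Because the right-hand side is positive, we may drop the absolute value and write $|G_\Omega(x,y)|=G_\Omega(x,y)$ on this punctured ball. Since the exponential is monotone, this gives
\[
e^{\alpha|G_\Omega(x,y)|}\geq e^{-\alpha C_y}\left(\frac{D}{|x-y|}\right)^{\alpha c_m}\qquad\text{on }B(y,r_y)\setminus\{y\}.
\]

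Next, restrict the integral defining $J_{\Omega,\alpha}(y)$ to $B(y,r_y)\subset\Omega$, using that the integrand is nonnegative, and pass to polar coordinates centred at $y$:
\[
J_{\Omega,\alpha}(y)\geq e^{-\alpha C_y}D^{\alpha c_m}\,|\mathds{S}^{2m-1}|\int_0^{r_y} r^{2m-1-\alpha c_m}\,\mathrm{d}r .
\]
By \eqref{eq:critical-constants}, $\alpha c_m\geq \Lambda_m c_m=2m$, so the exponent satisfies $2m-1-\alpha c_m\leq -1$. The radial integral therefore diverges; this includes the borderline case $\alpha=\Lambda_m$, where the integrand is $r^{-1}$. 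Hence $J_{\Omega,\alpha}(y)=+\infty$.

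There is no substantial obstacle here. The only delicate points are both settled by Lemma~\ref{lem:uniform-green-estimates}: the lower bound must keep the exact coefficient $c_m$ of the logarithm, and $G_\Omega(\cdot,y)$ must be positive near the pole so that the absolute value can be removed. Finally, the constants $C_y$ and $r_y$ depend on $y$, but that is harmless because the statement is pointwise in $y$.
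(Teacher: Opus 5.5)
Your proof is correct and matches the paper's argument essentially step for step. Both restrict $J_{\Omega,\alpha}(y)$ to $B(y,r_y)$, insert the lower bound \eqref{inferior green estimate} with the exact coefficient $c_m$, and conclude from the divergence of $\int_0^{r_y} r^{2m-1-\alpha c_m}\,\mathrm{d}r$, since $2m-1-\alpha c_m\leq -1$ for $\alpha\geq\Lambda_m$. One small remark: you do not even need the positivity of $G_\Omega(\cdot,y)$ near the pole, because $|G_\Omega|\geq G_\Omega$ already gives the pointwise lower bound.
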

\begin{proof}
Fix $y\in\Omega$, and let
$r_y\in(0,d(y)/4)$ and $C_y>0$ as in \eqref{inferior green estimate}. In view of \eqref{inferior green estimate}, 
for every $\alpha\geq\Lambda_m$, we have
$$
J_{\Omega,\alpha}(y)
\geq
\int_{B(y,r_y)}e^{\alpha|G_\Omega(x,y)|}\,\mathrm{d}x\geq
e^{-\alpha C_y}D^{\alpha c_m}
\int_{B(y,r_y)}|x-y|^{-\alpha c_m}\,\mathrm{d}x=
C\int_0^{r_y}r^{2m-1-\alpha c_m}\,\mathrm{d}r
=+\infty
$$
because $2m-1-\alpha c_m\leq-1$ if and only if
$\alpha\geq\Lambda_m$.
\end{proof}
\begin{theorem}\label{sharpness theorem} The exponent $\Lambda_m$ that appears in \eqref{sharp BM} is sharp.
\end{theorem}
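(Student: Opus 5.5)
To prove the statement, we fix $\alpha\ge\Lambda_m$, $C>0$ and a point $y\in\Omega$. We then take normalized data concentrating at $y$, namely
\[
f_\varepsilon:=\frac{\mathds{1}_{B(y,\varepsilon)}}{|B(y,\varepsilon)|},\qquad 0<\varepsilon<\varepsilon_0,
\]
so that $\|f_\varepsilon\|_{L^1(\Omega)}=1$. By Lemma \ref{solution2}, the very weak solution is $u_\varepsilon=G_\Omega\star f_\varepsilon$. The plan is to show that
\[
\liminf_{\varepsilon\to0}\int_\Omega e^{\alpha|u_\varepsilon|}\,\mathrm{d}x\ \ge\ J_{\Omega,\alpha}(y)=+\infty,
\]
where the last equality is Lemma \ref{32pi2 sharpness lemma}. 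This produces, for $\varepsilon$ small, a datum with left-hand side larger than $C$.

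The key step is pointwise convergence: $u_\varepsilon(x)\to G_\Omega(x,y)$ for every $x\ne y$. Using the decomposition $G_\Omega(x,z)=\Phi(x-z)-H_{\Omega,x}(z)$ from Section \ref{section2}, for fixed $x\neq y$ the map $z\mapsto G_\Omega(x,z)$ is continuous near $y$. The averages over $B(y,\varepsilon)$ therefore converge, since this is the Lebesgue point property of a continuous function. Fatou's lemma then gives
\[
\liminf_{\varepsilon\to0}\int_\Omega e^{\alpha|u_\varepsilon(x)|}\,\mathrm{d}x\ \ge\ \int_\Omega e^{\alpha|G_\Omega(x,y)|}\,\mathrm{d}x=+\infty.
\]

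I expect the main obstacle to be the continuity in $z$ claimed above, which rests on the smoothness of the regular part in the second variable. If one prefers to avoid it, there is a concrete fallback using the lower bound \eqref{inferior green estimate}, with symmetry $G_\Omega(x,z)=G_\Omega(z,x)$ and the constants chosen uniformly for poles near $y$. For $x\in B(y,r)\setminus B(y,2\varepsilon)$ and $z\in B(y,\varepsilon)$ one has $|x-z|\le 2|x-y|$. Since $H_{\Omega,\cdot}$ is bounded on compact subsets of $\Omega\times\Omega$, this gives
\[
u_\varepsilon(x)\ \ge\ c_m\log\frac{D}{2|x-y|}-C'.
\]
Consequently,
\[
\int_\Omega e^{\alpha|u_\varepsilon|}\,\mathrm{d}x\ \ge\ c\int_{2\varepsilon}^{r} s^{2m-1-\alpha c_m}\,\mathrm{d}s,
\]
and this diverges as $\varepsilon\to0$ because $\alpha c_m\ge 2m$. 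Either route yields the claim.
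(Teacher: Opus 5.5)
Your proof is correct and follows essentially the paper's argument: the paper uses the same normalized data $f_{k,y}=\chi_{B(y,1/k)}/|B(y,1/k)|$, the pointwise convergence $G_\Omega\star f_{k,y}\to G_\Omega(\cdot,y)$ away from $y$, Fatou's lemma, and Lemma~\ref{32pi2 sharpness lemma}, phrasing it only as a contradiction with an assumed uniform bound. The continuity you flag as the main obstacle is immediate, because $H_{\Omega,x}$ is smooth in $\Omega$ by construction, so your fallback is not needed (it is also valid and gives an explicit divergence rate).
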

\begin{proof}
Suppose, by contradiction, that there exist
$\alpha\geq\Lambda_m$ and $C_{\Omega,\alpha}>0$ such that the
left-hand side of \eqref{sharp BM} is bounded above by
$C_{\Omega,\alpha}$ for every very weak solution of
\eqref{POMEGA} corresponding to a datum
$f\in L^1(\Omega)\setminus\{0\}$. Fix $y \in \Omega$ and, for every sufficiently large $k\in\mathds N$,
define
\begin{equation}\label{eq:concentrating-data}
    f_{k,y}(x)\coloneq \frac{\chi_{B(y,\sfrac{1}{k})}(x)}{|B(y,\sfrac{1}{k})|}.
\end{equation}
Clearly $f_{k,y} \geq 0$ and $(f_{k,y}) \subseteq L^1(\Omega)$ with $\lVert f_{k,y} \rVert_{L^1(\Omega)}=1$ for every sufficiently large $k \in \mathds{N}$. From \hyperref[solution2]{Lemma \ref{solution2}}, for each $k \in \mathds{N}$, the unique very weak solution of \eqref{POMEGA} with $f=f_{k,y}$ can be written as
\begin{equation*}
    u_{k,y}(x)\coloneq \int_{\Omega}G_{\Omega}(x,z)f_{k,y}(z)\,\mathrm{d}z.
\end{equation*}
Now, by the continuity of $G_\Omega$ far away from the diagonal,
\begin{equation}\label{eq:concentrating-potential}
    \lim_{k\to+\infty}u_{k,y}(x)=\lim_{k\to+\infty}\frac{1}{|B(y,1/k)|}\int_{B(y,1/k)} G_{\Omega}(x,z)\,\mathrm{d}z=G_{\Omega}(x,y)\quad \forall\, x \in \Omega \setminus\{y\}.
\end{equation}
Thus, by Fatou's lemma and the assumed uniform bound, we have
\begin{align*}
    J_{\Omega,\alpha}(y)=\int_{\Omega} e^{\alpha|G_{\Omega}(x,y)|}\,\mathrm{d}x=\int_{\Omega} \liminf_{k \to +\infty} e^{\alpha |u_{k,y}(x)|}\,\mathrm{d}x\leq \liminf_{k\to+\infty} \int_{\Omega} e^{\alpha|u_{k,y}(x)|}\,\mathrm{d}x\leq C_{\Omega,\alpha},
\end{align*}
which directly contradicts \hyperref[32pi2 sharpness lemma]{Lemma \ref{32pi2 sharpness lemma}}.
\end{proof}

\section{The optimal constant $S_{\Omega,L^1}(\alpha)$ in the Brezis--Merle inequality}\label{section3}
Fix $\alpha\in(0,\Lambda_m)$. In this section, we prove the non-existence of maximizers for $S_{\Omega,L^1}(\alpha)$ (see \hyperref[prop:L1-value-and-nonattainment]{Proposition
\ref{prop:L1-value-and-nonattainment}}) and the optimal-value
identity \eqref{intro-optimal-value}. At this
stage, \eqref{Jensen upper bound} and \eqref{uniform-J-bound} give
\[
S_{\Omega,L^1}(\alpha)
\leq
\sup_{y\in\Omega}J_{\Omega,\alpha}(y)
<+\infty.
\]
On the other hand, for each $y\in\Omega$, the concentrating family $(f_{k,y})$ defined in
\eqref{eq:concentrating-data} suggests that the reverse inequality also holds.
These observations lead us to expect that
\begin{equation}\label{eq:L1-supJ}
S_{\Omega,L^1}(\alpha)=\sup_{y\in\Omega}J_{\Omega,\alpha}(y).
\end{equation}
The proposition
below establishes this identity rigorously by means of
concentrating sequences. The non-existence of maximizers for $S_{\Omega,L^1}(\alpha)$ is then
obtained from the equality case in the Jensen reduction
\eqref{Jensen upper bound} and the pole-separation property
\eqref{Green pole separation}.

\begin{proposition}\label{prop:L1-value-and-nonattainment}
For every $\alpha\in(0,\Lambda_m)$, \eqref{eq:L1-supJ} holds. Moreover, the supremum defining $S_{\Omega,L^1}(\alpha)$ is not attained
by any function in $L^1(\Omega)$.
\end{proposition}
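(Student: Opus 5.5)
We establish the identity \eqref{eq:L1-supJ} by proving the reverse inequality $S_{\Omega,L^1}(\alpha)\geq J_{\Omega,\alpha}(y)$ for every fixed $y\in\Omega$, using the concentrating data $f_{k,y}$ from \eqref{eq:concentrating-data}. These data satisfy $\|f_{k,y}\|_{L^1(\Omega)}=1$, and by \eqref{eq:concentrating-potential} we have $u_{k,y}(x)\to G_\Omega(x,y)$ for every $x\neq y$. Fatou's lemma, used exactly as in the proof of Theorem \ref{sharpness theorem} but now with $\alpha<\Lambda_m$, gives
\[
J_{\Omega,\alpha}(y)\leq\liminf_{k\to+\infty}\int_\Omega e^{\alpha|u_{k,y}|}\,\mathrm{d}x\leq S_{\Omega,L^1}(\alpha).
\]
Taking the supremum over $y$ and combining with the upper bound $S_{\Omega,L^1}(\alpha)\leq\sup_y J_{\Omega,\alpha}(y)$ from \eqref{Jensen upper bound} yields \eqref{eq:L1-supJ}. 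This part is routine, and no uniform convergence is needed.

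For non-attainment, we argue by contradiction. Suppose $f\in L^1(\Omega)$ with $\|f\|_{L^1(\Omega)}=1$ attains the supremum, and let $u=G_\Omega\star f$ and $\mu=\mu_f$. Then every inequality in the chain \eqref{Jensen upper bound} is an equality, with finite common value $S:=\sup J_{\Omega,\alpha}$. In particular, the last step gives $\int_\Omega (S-J_{\Omega,\alpha}(y))\,\mathrm{d}\mu(y)=0$, so $J_{\Omega,\alpha}=S$ holds $\mu$-a.e. The key equality is the middle one (Jensen):
\[
\int_\Omega\Big[\int_\Omega e^{\alpha|G_\Omega(x,y)|}\,\mathrm{d}\mu(y)-\exp\Big(\alpha\int_\Omega|G_\Omega(x,y)|\,\mathrm{d}\mu(y)\Big)\Big]\mathrm{d}x=0.
\]
The integrand is nonnegative, so it vanishes for a.e. $x$. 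Since the exponential is strictly convex, this forces $y\mapsto|G_\Omega(x,y)|$ to be $\mu$-a.e. constant for a.e. $x\in\Omega$.

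The main obstacle is turning this a.e. constancy into a contradiction with the fact that $\mu$ is absolutely continuous, so that $\operatorname{supp}\mu$ contains at least two distinct points $y_0\neq y_1$. The tool is the pole-separation property \eqref{Green pole separation}. The plan is to choose two disjoint sets of positive $\mu$-measure, $E_0\subset B(y_0,\varepsilon)$ and $E_1\subset B(y_1,\varepsilon)$, and then find a set of $x$ of positive Lebesgue measure on which $|G_\Omega(x,\cdot)|$ on $E_1$ is strictly larger than on $E_0$. Since \eqref{Green pole separation} concerns single poles, we need a version uniform in the poles. For $x$ near $y_1$, the lower bound \eqref{inferior green estimate} makes $|G_\Omega(x,z)|$ large when $z$ is close to $x$. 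Meanwhile, $|G_\Omega(x,z)|\leq C$ for $z\in E_0$ and $x$ near $y_1$, by continuity of $G_\Omega$ away from the diagonal (or by \eqref{faraway pole}). So we should exploit the fact that $y_1$ is a point of $\operatorname{supp}\mu$. Take $x$ close to $y_1$ and $E_1=B(x,\delta)\cap\operatorname{supp}\mu$, which has positive $\mu$-measure for $\mu$-typical $x$; alternatively, take $x$ in a positive-measure set where $\mu(B(x,\delta))>0$. On $E_1$ we have $|G_\Omega(x,\cdot)|>C$, whereas on $E_0$ it is at most $C$. This shows $|G_\Omega(x,\cdot)|$ is not $\mu$-a.e. constant for all $x$ in a set of positive Lebesgue measure, giving the contradiction.

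To make this rigorous, it is enough to note that the set of $x\in B(y_1,\rho)$ with $\mu(B(x,\delta))>0$ contains an open neighborhood of $y_1$. Indeed, $y_1\in\operatorname{supp}\mu$, so $\mu(B(y_1,\delta/2))>0$, and every $x\in B(y_1,\delta/2)$ then satisfies $\mu(B(x,\delta))>0$. The constant $\delta$ is chosen via \eqref{inferior green estimate}, applied with local uniformity in the pole (from the continuity of $H_{\Omega,y}$ jointly in $(x,y)$ away from the boundary), so that $|G_\Omega(x,z)|>C$ whenever $|x-z|<\delta$ and $x,z$ lie near $y_1$.
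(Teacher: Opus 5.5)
Your proof is correct. The identity \eqref{eq:L1-supJ} is obtained exactly as in the paper. For non-attainment you start from the same Jensen-equality conclusion, but you reach the contradiction differently.

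The paper chooses a countable dense set $\{x_n\}$ inside the full-measure set where Jensen is an equality. It intersects the corresponding full-$|f|$-mass sets $E_n$ to produce two distinct points $y_0,y_1$ with $|G_\Omega(x_n,y_0)|=|G_\Omega(x_n,y_1)|$ for every $n$. It then extends this to all $x\neq y_0,y_1$ by continuity and contradicts the two-pole separation \eqref{Green pole separation}.

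You need only one good point $x$ near a support point $y_1$. You compare $|G_\Omega(x,\cdot)|$ on two sets of positive $\mu$-measure: $B(x,\delta)$, where it is large, and a small ball around a second support point $y_0$, where it is bounded. This avoids the countable intersection and the limit passage. It also adapts verbatim to any $\mu\in\mathcal P(\Omega)$ with two distinct support points (take $x$ not an atom), so it would give the Dirac-mass part of Theorem \ref{extremal characterization} as well.

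The price is a near-diagonal lower bound that is uniform in the pole. \eqref{inferior green estimate} does not give this as stated, since its $r_y,C_y$ depend on $y$; the paper's route needs only the qualitative separation of two fixed poles. The uniformity is nonetheless available inside the paper, without appealing to joint continuity of $H_{\Omega,y}$. By \eqref{Robin property 1}, $|H_{\Omega,z}|\le C_K$ on $B(z,r_K)$ for $z$ in a compact $K\subset\subset\Omega$, hence $G_\Omega(x,z)\ge c_m\log(D/|x-z|)-C_K$ there.

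Two minor repairs are needed. First, the bound $|G_\Omega(x,z)|\le C$ for $z$ near $y_0$ should come from \eqref{sharp Green function estimate}, using $|x-z|\ge|y_0-y_1|/2$ once all radii are at most $|y_0-y_1|/4$. It should not come from \eqref{faraway pole}, which requires $|x-z|\ge d(z)/4$. Second, this $C$ should be fixed, using those radii, before $\delta$ is chosen, so that the choice is not circular.
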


\begin{proof}
The Jensen estimate \eqref{Jensen upper bound} shows that, for every
$f\in L^1(\Omega)$ satisfying
$\lVert f\rVert_{L^1(\Omega)}=1$,
\begin{equation}\label{eq:L1-upper-bound-by-J}
    \int_\Omega
    \exp\left(
        \alpha\left|(G_\Omega\star f)(x)\right|
    \right)
    \,\mathrm{d}x
    \leq
    \int_\Omega J_{\Omega,\alpha}(y)|f(y)|\,\mathrm{d}y
    \leq
    \sup_{y\in\Omega}J_{\Omega,\alpha}(y).
\end{equation}
Consequently,
\begin{equation*}
    S_{\Omega,L^1}(\alpha)
    \leq
    \sup_{y\in\Omega}J_{\Omega,\alpha}(y).
\end{equation*}

For the reverse inequality, fix $y\in\Omega$ and consider the
sequence $(f_{k,y})$ defined in \eqref{eq:concentrating-data}.
Then $\|f_{k,y}\|_{L^1(\Omega)}=1$ for every sufficiently large
$k\in\mathds N$, and, by
\eqref{eq:concentrating-potential}, 
\(
    (G_\Omega\star f_{k,y})(x)
    \rightarrow G_\Omega(x,y) \)
for every \(x\in\Omega\setminus\{y\}\).
 Hence, by Fatou's lemma,
\begin{equation*}
    J_{\Omega,\alpha}(y)
    \leq
    \liminf_{k\to+\infty}
    \int_\Omega
    \exp\left(
        \alpha\left|(G_\Omega\star f_{k,y})(x)\right|
    \right)
    \,\mathrm{d}x
    \leq
    S_{\Omega,L^1}(\alpha).
\end{equation*}
Since $y\in\Omega$ is arbitrary, we conclude the proof of \eqref{eq:L1-supJ}.

It remains to prove that the supremum $S_{\Omega,L^1}(\alpha)$ is not attained. Suppose, by
contradiction, that there exists $f\in L^1(\Omega)$ such that
\(
    \lVert f\rVert_{L^1(\Omega)}=1
\)
and
\begin{equation*}
    \int_\Omega
    \exp\left(
        \alpha\left|(G_\Omega\star f)(x)\right|
    \right)
    \,\mathrm{d}x
    =
    S_{\Omega,L^1}(\alpha).
\end{equation*}
Since $f$ is assumed to be extremal and
\eqref{eq:L1-supJ} has already been proved, the first and last
terms in the Jensen reduction \eqref{Jensen upper bound}
coincide. Therefore, equality holds at every intermediate step
of that chain. We now make explicit the consequence for Jensen's
inequality. For a.e. $x\in\Omega$,
define its Jensen defect by
\begin{equation*}
\mathcal D_f(x)
:={}
\int_\Omega
e^{\alpha|G_\Omega(x,y)|}
|f(y)|\,\mathrm{d}y
-
\exp\left(
\alpha
\int_\Omega
|G_\Omega(x,y)|\,|f(y)|\,\mathrm{d}y
\right).
\end{equation*}
The equality between the second and third terms in \eqref{Jensen upper bound} gives
\[
\int_\Omega \mathcal D_f(x)\,\mathrm{d}x=0.
\]
Moreover, by Jensen's inequality,
\(
\mathcal D_f(x)\geq0\) for a.e. \(x\in\Omega\). Hence, it follows that
\[
\mathcal D_f(x)=0
\qquad\text{for a.e. }x\in\Omega.
\]
Therefore, there exists a measurable set $X_0\subseteq\Omega$ such
that $|\Omega\setminus X_0|=0$ and, for every $x\in X_0$,
\begin{equation*}
\exp\left(
\alpha
\int_\Omega
|G_\Omega(x,y)|\,|f(y)|\,\mathrm{d}y
\right)
=
\int_\Omega
e^{\alpha|G_\Omega(x,y)|}
|f(y)|\,\mathrm{d}y.
\end{equation*}
Thus, equality holds in Jensen's inequality for every $x\in X_0$.
Since the exponential function is strictly convex, the equality
case in Jensen's inequality implies that, for every $x\in X_0$,
there exists a constant $c_x\geq0$ such that
\begin{equation*}
    |G_\Omega(x,y)|=c_x
\end{equation*}
for $|f(y)|\,\mathrm{d}y$-a.e. $y\in\Omega$ (see Theorem 2.2 of \cite{Lieb}).

Since $\Omega$ is open and $X_0\subseteq\Omega$ is such
that $|\Omega\setminus X_0|=0$, the set $X_0$ is dense in $\Omega$. We may therefore choose a countable dense set
\begin{equation*}
    \{x_n:n\in\mathds{N}\}
    \subseteq X_0.
\end{equation*}

For every $n\in\mathds{N}$, there exist $c_n\geq0$
and a measurable set $E_n\subseteq\Omega\setminus\{x_n\}$ such that
\begin{equation}\label{eq:Green-constant-En}
    |G_\Omega(x_n,y)|=c_n
    \quad
    \forall\,y\in E_n
\end{equation}
and
\begin{equation}\label{eq:En-full-f-mass}
    \int_{\Omega\setminus E_n}|f(y)|\,\mathrm{d}y=0.
\end{equation}
Define
\(
    E
    :=
    \bigcap_{n=1}^{\infty}E_n\).
Since
\(
    \Omega\setminus E
    =
    \bigcup_{n=1}^{\infty}(\Omega\setminus E_n)\),
it follows from \eqref{eq:En-full-f-mass} that
\begin{equation*}
    \int_{\Omega\setminus E}|f(y)|\,\mathrm{d}y=0.
\end{equation*}
Consequently,
\begin{equation*}
    \int_E|f(y)|\,\mathrm{d}y
    =
    \int_\Omega|f(y)|\,\mathrm{d}y
    =
    1.
\end{equation*}

In particular, $E$ has positive Lebesgue measure, and $E$ must contain at least two distinct points $y_0$, $y_1 \in E$ with $y_0 \neq y_1$.

Since $y_0,y_1\in E_n$ for every $n\in\mathds{N}$,
\eqref{eq:Green-constant-En} gives
\begin{equation}\label{eq:Green-equality-dense-set}
    |G_\Omega(x_n,y_0)|
    =
    c_n
    =
    |G_\Omega(x_n,y_1)|
    \quad
    \forall\,n\in\mathds{N}.
\end{equation}
Now fix $x\in\Omega\setminus\{y_0,y_1\}$. Since
$\{x_n:n\in\mathds{N}\}$ is dense in $\Omega$, there exists a
subsequence $(x_{n_k})$ such that
\(
    x_{n_k}\longrightarrow x\) as $k \to + \infty$.
The functions
    \(z\mapsto G_\Omega(z,y_0)\)
and
    \(z\longmapsto G_\Omega(z,y_1)\)
are continuous at $x$. Passing to the limit in
\eqref{eq:Green-equality-dense-set}, we obtain
\begin{equation}\label{eq:two-poles-equality}
    |G_\Omega(x,y_0)|
    =
    |G_\Omega(x,y_1)|
    \quad
    \forall\,
    x\in\Omega\setminus\{y_0,y_1\}.
\end{equation}
Since $y_0\neq y_1$, identity
\eqref{eq:two-poles-equality} contradicts
\eqref{Green pole separation}. Therefore,
$S_{\Omega,L^1}(\alpha)$ is not attained in $L^1(\Omega)$.
\end{proof}

For every $y\in\Omega$, the measures $f_{k,y}(z)\,\mathrm{d}z$, with
$f_{k,y}$ defined in \eqref{eq:concentrating-data}, converge
weakly-$*$ to $\delta_y$, which is not induced by any
$L^1(\Omega)$-density.
Together with \hyperref[prop:L1-value-and-nonattainment]{Proposition
\ref{prop:L1-value-and-nonattainment}}, this motivates extending the admissible class
from normalized $L^1$-data to positive Radon probability measures.
It suffices to consider positive measures, since
\eqref{eq:L1-upper-bound-by-J} depends only on $|f|$ and the functions
$f_{k,y}$ are nonnegative.

We denote by
\begin{equation*}
    \mathcal{P}(\Omega)
    :=
    \left\{
        \mu\in\mathcal{M}^{+}(\Omega):
        \mu(\Omega)=1
    \right\}
\end{equation*}
the set of positive Radon probability measures on $\Omega$. For
$\mu\in\mathcal{P}(\Omega)$, we define
\begin{equation*}
    (G_\Omega\star\mu)(x)
    :=
    \int_\Omega G_\Omega(x,y)\,\mathrm{d}\mu(y).
\end{equation*}
This potential is well defined for a.e. $x\in\Omega$.
Indeed, using the elementary inequality
    \(t\leq\frac{1}{\alpha}e^{\alpha t}\), \(t\geq0\),
 Tonelli's theorem, and \hyperref[exp uniform G integrability]{Theorem \ref{exp uniform G integrability}}, we find
\begin{align*}
    \int_\Omega\int_\Omega
    |G_\Omega(x,y)|
    \,\mathrm{d}\mu(y)\,\mathrm{d}x
    \leq
    \frac{1}{\alpha}
    \int_\Omega\int_\Omega
    e^{\alpha|G_\Omega(x,y)|}
    \,\mathrm{d}\mu(y)\,\mathrm{d}x
    &=
    \frac{1}{\alpha}
    \int_\Omega
    J_{\Omega,\alpha}(y)\,\mathrm{d}\mu(y)\\
    &\leq
    \frac{1}{\alpha}
    \sup_{y\in\Omega}J_{\Omega,\alpha}(y)
    <+\infty.
\end{align*}
Consequently,
\begin{equation*}
    \int_\Omega
    |G_\Omega(x,y)|
    \,\mathrm{d}\mu(y)
    <+\infty \quad \text{ for a.e. } x \in \Omega.
\end{equation*}

The following proposition establishes uniform exponential integrability
over $\mathcal P(\Omega)$ for every
$\alpha\in(0,\Lambda_m)$ and shows that this range of the exponent is
sharp. The optimal right-hand side and all equality cases are
identified in \hyperref[extremal characterization]{Theorem
\ref{extremal characterization}}.

\begin{proposition}\label{cor:BM-probability-measures}
For every $\alpha\in(0,\Lambda_m)$, there exists a constant
$C_{\Omega, \alpha}>0$ such that
\begin{equation*}
    \int_\Omega
    \exp\left(
        \alpha\left|(G_\Omega\star\mu)(x)\right|
    \right)
    \,\mathrm{d}x
    \leq
    C_{\Omega, \alpha}
\end{equation*}
for every $\mu\in\mathcal{P}(\Omega)$. Moreover, the range
$\alpha\in(0,\Lambda_m)$ is sharp.
\end{proposition}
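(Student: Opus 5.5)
The plan is to repeat the Jensen reduction \eqref{Jensen upper bound} with the probability measure $\mu$ in place of $\mathrm{d}\mu_f$. Then bound the one-pole profile by \hyperref[exp uniform G integrability]{Theorem \ref{exp uniform G integrability}}. Sharpness will follow from \hyperref[sharpness theorem]{Theorem \ref{sharpness theorem}}, since $L^1$-data embed into $\mathcal P(\Omega)$.

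\emph{Upper bound.} Fix $\alpha\in(0,\Lambda_m)$ and $\mu\in\mathcal P(\Omega)$. By the discussion preceding the statement, $\int_\Omega|G_\Omega(x,y)|\,\mathrm{d}\mu(y)<+\infty$ for a.e. $x$. For such $x$ we have $|(G_\Omega\star\mu)(x)|\le\int_\Omega|G_\Omega(x,y)|\,\mathrm{d}\mu(y)$. Since $\mu$ is a probability measure and $t\mapsto e^{\alpha t}$ is convex, Jensen's inequality gives
\[
\exp\bigl(\alpha|(G_\Omega\star\mu)(x)|\bigr)\le\int_\Omega e^{\alpha|G_\Omega(x,y)|}\,\mathrm{d}\mu(y)
\]
for a.e. $x$. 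The integrand $(x,y)\mapsto e^{\alpha|G_\Omega(x,y)|}$ is nonnegative and Borel measurable, because $G_\Omega$ is continuous off the diagonal. Integrating in $x$ and applying Tonelli's theorem therefore yields
\[
\int_\Omega e^{\alpha|(G_\Omega\star\mu)(x)|}\,\mathrm{d}x\le\int_\Omega J_{\Omega,\alpha}(y)\,\mathrm{d}\mu(y)\le\sup_{y\in\Omega}J_{\Omega,\alpha}(y).
\]
The right-hand side is finite by \eqref{uniform-J-bound}, so we may take $C_{\Omega,\alpha}:=\sup_{y}J_{\Omega,\alpha}(y)$, which is independent of $\mu$. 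The only point needing care is measurability: $J_{\Omega,\alpha}$ must be measurable in $y$ for the $\mu$-integral to make sense. Tonelli's theorem provides this, as $J_{\Omega,\alpha}$ is a section integral of a nonnegative jointly measurable function.

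\emph{Sharpness.} Let $\alpha\ge\Lambda_m$. There are two ways to show that no uniform bound over $\mathcal P(\Omega)$ can hold, and both are immediate. First, for any $y\in\Omega$ we have $\delta_y\in\mathcal P(\Omega)$ and $G_\Omega\star\delta_y=G_\Omega(\cdot,y)$, so the left-hand side equals $J_{\Omega,\alpha}(y)=+\infty$ by \hyperref[32pi2 sharpness lemma]{Lemma \ref{32pi2 sharpness lemma}}. Second, one can use the $L^1$ route, which avoids infinite values: each $f_{k,y}\,\mathrm{d}x$ from \eqref{eq:concentrating-data} lies in $\mathcal P(\Omega)$, so a uniform bound over $\mathcal P(\Omega)$ would contradict \hyperref[sharpness theorem]{Theorem \ref{sharpness theorem}}, whose proof uses \eqref{eq:concentrating-potential}, Fatou's lemma and \hyperref[32pi2 sharpness lemma]{Lemma \ref{32pi2 sharpness lemma}}. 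I would present the Dirac argument as the main one and mention the second as confirmation that the failure already occurs for absolutely continuous measures.

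There is no real obstacle here. The subtle points are justifying that the Jensen step holds pointwise a.e. (i.e. that $G_\Omega(x,\cdot)\in L^1(\mu)$ for a.e. $x$) and that Tonelli's theorem applies to a general Radon measure $\mu$. Both follow from the computation given just before the statement together with $\sigma$-finiteness.
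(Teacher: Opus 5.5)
Your proposal is correct and matches the paper's proof: the bound comes from the Jensen reduction with $\mu$ in place of $\mu_f$, Tonelli's theorem and the uniform estimate of Theorem~\ref{exp uniform G integrability}, and sharpness comes from testing with $\delta_y$ and applying Lemma~\ref{32pi2 sharpness lemma}. Your alternative sharpness argument via the absolutely continuous measures $f_{k,y}\,\mathrm{d}x$ is also valid, but it is not needed.
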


\begin{proof}
Let $\mu\in\mathcal P(\Omega)$. Repeating the Jensen reduction
\eqref{Jensen upper bound}, with $\mu$ in place of $\mu_f$, and
using Tonelli's theorem, we obtain
\begin{equation*}
\int_\Omega e^{\alpha|(G_\Omega\star\mu)(x)|}\,\mathrm{d}x
\leq
\int_\Omega J_{\Omega,\alpha}(y)\,\mathrm{d}\mu(y)
\leq
\sup_{y\in\Omega}J_{\Omega,\alpha}(y)
\leq C_{\Omega, \alpha},
\end{equation*}
where the last inequality follows from \hyperref[exp uniform G integrability]{Theorem \ref{exp uniform G integrability}}.

To prove the sharpness, fix $y\in\Omega$ and consider
$\mu=\delta_y$. If $\alpha\geq\Lambda_m$, then
\begin{equation*}
    \int_\Omega
    \exp\left(
        \alpha\left|(G_\Omega\star\delta_y)(x)\right|
    \right)
    \,\mathrm{d}x
    =
    J_{\Omega,\alpha}(y)
    =
    +\infty
\end{equation*}
by \hyperref[32pi2 sharpness lemma]{Lemma \ref{32pi2 sharpness lemma}}. Therefore, no finite bound uniform over all
$\mu\in\mathcal P(\Omega)$ can hold when
$\alpha\geq\Lambda_m$.
\end{proof}

\hyperref[cor:BM-probability-measures]{Proposition \ref{cor:BM-probability-measures}} shows that
$S_{\Omega,\mathcal P}(\alpha)$ is finite, where $S_{\Omega,\mathcal P}(\alpha)$ is defined in \eqref{SP def}. We now prove that passing from
normalized $L^1$-data to probability measures does not change the
optimal value, thereby establishing the first equality in
\eqref{intro-optimal-value}.

\begin{proposition}\label{prop:equality-of-extremal-values}
For every $\alpha\in(0,\Lambda_m)$, one has
\begin{equation*}
    S_{\Omega,\mathcal P}(\alpha)
    =
    S_{\Omega,L^1}(\alpha)
    =
    \sup_{y\in\Omega}J_{\Omega,\alpha}(y).
\end{equation*}
\end{proposition}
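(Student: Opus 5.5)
The second equality is exactly \eqref{eq:L1-supJ}, already established in Proposition~\ref{prop:L1-value-and-nonattainment}. It therefore suffices to prove
\[
\sup_{y\in\Omega}J_{\Omega,\alpha}(y)\le S_{\Omega,\mathcal P}(\alpha)\le \sup_{y\in\Omega}J_{\Omega,\alpha}(y).
\]
I would prove the two inequalities separately.

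\textbf{Upper bound.} This is the Jensen reduction already carried out in the proof of Proposition~\ref{cor:BM-probability-measures}. Fix $\mu\in\mathcal P(\Omega)$. For a.e.\ $x$ the potential $(G_\Omega\star\mu)(x)$ is finite, as shown just before that proposition. At any such $x$,
\[
|(G_\Omega\star\mu)(x)|\le\int_\Omega|G_\Omega(x,y)|\,\mathrm d\mu(y).
\]
Since $\mu$ is a probability measure, Jensen's inequality for the convex function $t\mapsto e^{\alpha t}$ gives
\[
e^{\alpha|(G_\Omega\star\mu)(x)|}\le\int_\Omega e^{\alpha|G_\Omega(x,y)|}\,\mathrm d\mu(y).
\]
Integrating in $x$ and applying Tonelli's theorem (the integrand is nonnegative and jointly measurable, because $G_\Omega$ is continuous off the diagonal) yields
\[
\int_\Omega e^{\alpha|(G_\Omega\star\mu)(x)|}\,\mathrm dx\le\int_\Omega J_{\Omega,\alpha}\,\mathrm d\mu\le\sup_{y\in\Omega} J_{\Omega,\alpha}(y).
\]
Taking the supremum over $\mu\in\mathcal P(\Omega)$ gives $S_{\Omega,\mathcal P}(\alpha)\le\sup_{y\in\Omega}J_{\Omega,\alpha}(y)$.

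\textbf{Lower bound.} I would use the Dirac masses directly. For each $y\in\Omega$ we have $\delta_y\in\mathcal P(\Omega)$ and $(G_\Omega\star\delta_y)(x)=G_\Omega(x,y)$ for every $x\neq y$. Hence
\[
\int_\Omega e^{\alpha|(G_\Omega\star\delta_y)(x)|}\,\mathrm dx=J_{\Omega,\alpha}(y)\le S_{\Omega,\mathcal P}(\alpha).
\]
Taking the supremum over $y\in\Omega$ gives $\sup_{y\in\Omega}J_{\Omega,\alpha}(y)\le S_{\Omega,\mathcal P}(\alpha)$.

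There is an alternative route to the lower bound. Every $f\in L^1(\Omega)$ with $\|f\|_{L^1(\Omega)}=1$ satisfies $|f|\,\mathrm dy\in\mathcal P(\Omega)$, and $|G_\Omega\star f|\le G_\Omega\star|f|$ pointwise wherever both are defined. This gives $S_{\Omega,L^1}(\alpha)\le S_{\Omega,\mathcal P}(\alpha)$ directly. The Dirac argument is simpler, though, so I would use it.

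Both directions are essentially immediate, so I do not expect a genuine obstacle. The only points needing care are measurability and a.e.\ finiteness of $G_\Omega\star\mu$, which the paper has already verified, and the use of Tonelli rather than Fubini to justify exchanging the order of integration.
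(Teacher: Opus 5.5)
Your proof is correct and coincides with the paper's. The upper bound $S_{\Omega,\mathcal P}(\alpha)\le\sup_{y\in\Omega}J_{\Omega,\alpha}(y)$ is the Jensen--Tonelli reduction of Proposition~\ref{cor:BM-probability-measures}, the reverse inequality comes from testing with the Dirac masses $\delta_y$, and the second equality is quoted from Proposition~\ref{prop:L1-value-and-nonattainment}.

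One caveat concerns the aside you discard. Since $G_\Omega$ may change sign, the pointwise bound $|G_\Omega\star f|\le G_\Omega\star|f|$ is false in general; only $|G_\Omega\star f|\le|G_\Omega|\star|f|$ holds. So that alternative route would not work as stated, and you are right to rely on the Dirac argument.
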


\begin{proof}
\hyperref[cor:BM-probability-measures]{Proposition \ref{cor:BM-probability-measures}} gives
\begin{equation*}
    S_{\Omega,\mathcal P}(\alpha)
    \leq
    \sup_{y\in\Omega}J_{\Omega,\alpha}(y).
\end{equation*}
Conversely, for every $y\in\Omega$, the Dirac mass $\delta_y$
belongs to $\mathcal{P}(\Omega)$ and satisfies
\(
    (G_\Omega\star\delta_y)(x)
    =
    G_\Omega(x,y)
\)
for a.e. $x\in\Omega$. Hence,

\begin{equation*}
    J_{\Omega,\alpha}(y) = \int_\Omega
    \exp\left(
        \alpha\left|(G_\Omega\star\delta_y)(x)\right|
    \right)
    \,\mathrm{d}x \leq S_{\Omega,\mathcal P}(\alpha)
    .
\end{equation*}
Taking the supremum over $y\in\Omega$, we obtain
\begin{equation*}
    \sup_{y\in\Omega}J_{\Omega,\alpha}(y)
    \leq
    S_{\Omega,\mathcal P}(\alpha).
\end{equation*}

Therefore,
\(
    S_{\Omega,\mathcal P}(\alpha)
    =
    \sup_{y\in\Omega}J_{\Omega,\alpha}(y)\). Combining this identity with
\hyperref[prop:L1-value-and-nonattainment]{Proposition \ref{prop:L1-value-and-nonattainment}}, we conclude  the proof.
\end{proof}

\section{Properties of the one-pole profile \(J_{\Omega,\alpha}\)}\label{section5}

Let $\alpha \in (0, \Lambda_m)$. \hyperref[prop:equality-of-extremal-values]{Proposition \ref{prop:equality-of-extremal-values}} and \hyperref[prop:L1-value-and-nonattainment]{Proposition
\ref{prop:L1-value-and-nonattainment}} reduce the study of the optimal constant $S_{\Omega,L^1}(\alpha)$ in the Brezis--Merle inequality and the analysis of the attainability of $S_{\Omega,\mathcal P}(\alpha)$ to the one-point optimization problem
\[
\sup_{y\in\Omega}J_{\Omega,\alpha}(y).
\]
This motivates a more detailed
study of $J_{\Omega,\alpha}$. In this section, we  establish the properties of $J_{\Omega,\alpha}$ needed to prove that
$J_{\Omega,\alpha}$ attains its maximum in $\Omega$. To this end,  we first show that
$J_{\Omega,\alpha}$ is continuous on $\Omega$ and determine its limit as the pole
approaches $\partial\Omega$.

\begin{lemma}\label{Jalpha continuous}
For every \(\alpha\in(0,\Lambda_{m})\), the map $J_{\Omega,\alpha}:\Omega\to\mathds R$ is continuous.
\end{lemma}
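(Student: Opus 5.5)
The plan is to combine pointwise convergence of the integrands with uniform integrability (Vitali's convergence theorem). Fix $y_0\in\Omega$ and a sequence $y_k\to y_0$ in $\Omega$. Set $\delta:=d(y_0)/2$, so that for $k$ large $y_k\in B(y_0,\delta/2)$ and $d(y_k)\geq 3\delta/2$.

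\textbf{Step 1: pointwise convergence.} We show that $G_\Omega(x,y_k)\to G_\Omega(x,y_0)$ for every $x\in\Omega\setminus\{y_0\}$. Write $G_\Omega(x,y)=\Phi(x-y)-H_{\Omega,y}(x)$. The term $\Phi(x-y_k)\to\Phi(x-y_0)$ trivially. For the regular part, we use the joint continuity of $G_\Omega$ off the diagonal in $\Omega\times\Omega$. This is part of the standard Green function theory in \cite{ExistenceGreen}. Alternatively, it follows from the fact that $H_{\Omega,y}$ solves \eqref{auxiliary} with boundary data depending smoothly on $y$, since $\Phi(\cdot-y)$ is smooth near $\partial\Omega$ uniformly for $y$ near $y_0$. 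Continuous dependence of the polyharmonic Dirichlet problem on its boundary data then gives local uniform convergence of $H_{\Omega,y_k}$ to $H_{\Omega,y_0}$ on compact subsets of $\Omega$. Consequently $e^{\alpha|G_\Omega(x,y_k)|}\to e^{\alpha|G_\Omega(x,y_0)|}$ for a.e.\ $x$.

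\textbf{Step 2: uniform integrability.} Choose $\beta\in(\alpha,\Lambda_m)$ and $p:=\beta/\alpha>1$. Theorem \ref{exp uniform G integrability} gives
\[
\sup_{k}\int_\Omega\left(e^{\alpha|G_\Omega(x,y_k)|}\right)^{p}\mathrm{d}x=\sup_k J_{\Omega,\beta}(y_k)\leq \sup_{y\in\Omega}J_{\Omega,\beta}(y)<+\infty .
\]
Hence the family $\{e^{\alpha|G_\Omega(\cdot,y_k)|}\}$ is bounded in $L^p(\Omega)$ with $p>1$, and therefore uniformly integrable on the finite-measure set $\Omega$. Vitali's theorem then yields $J_{\Omega,\alpha}(y_k)\to J_{\Omega,\alpha}(y_0)$, which is the desired continuity.

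\textbf{Main obstacle.} The delicate point is Step 1, namely the continuity of $y\mapsto H_{\Omega,y}(x)$; Step 2 is immediate from the earlier uniform bound. If continuous dependence on the data is not to be quoted directly, I would argue via the Green representation instead. For $\varphi\in C_0^\infty(\Omega)$, Lemma \ref{dist property2} and symmetry give that $G_\Omega(\cdot,y_k)$ is bounded in $L^1$ (by the estimate in Lemma \ref{solution2}). Moreover, the differences $G_\Omega(\cdot,y_k)-G_{B(y_k,\delta)}(\cdot,y_k)$ are polyharmonic in $B(y_k,\delta)$ and uniformly bounded there by Lemma \ref{interior estimate BMOMEGA}, exactly as in the proof of Lemma \ref{lem:uniform-green-estimates}. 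Away from the poles, the uniform bound \eqref{faraway pole} together with interior elliptic estimates gives local $C^{2m-1}$ bounds. By Arzel\`a--Ascoli, any subsequential limit is a distributional solution of $(-\Delta)^m w=\delta_{y_0}$ with the Dirichlet conditions. By the uniqueness in Lemma \ref{solution2}, applied to the difference with $G_\Omega(\cdot,y_0)$, the limit must equal $G_\Omega(\cdot,y_0)$, giving the pointwise convergence without passing to a subsequence.
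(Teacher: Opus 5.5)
Your proof is correct, but it controls the integrals differently from the paper.

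\textbf{Shared step.} Both arguments rest on the pointwise convergence $G_\Omega(x,y_k)\to G_\Omega(x,y_0)$ for $x\neq y_0$. The paper also simply takes this from the continuity of $G_\Omega$ off the diagonal, so your Step 1 is at the same level of rigor as the paper.

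\textbf{Paper's route.} The paper fixes a small radius $r<d(y_0)/9$ and checks the inclusion $B(y_0,r)\subseteq B(y_n,d(y_n)/4)$. It applies dominated convergence on $\Omega\setminus B(y_0,r)$, where $|G_\Omega|$ is bounded by the Mayboroda--Maz'ya estimate. It then uses \eqref{Green omega to ball} to bound both near-pole integrals by $Cr^{2m-\alpha c_m}$, uniformly in $n$. In effect this is a hand-made uniform integrability estimate at the fixed exponent $\alpha$.

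\textbf{Your route.} You use the room between $\alpha$ and $\Lambda_m$ instead. With $\beta\in(\alpha,\Lambda_m)$, \eqref{uniform-J-bound} bounds $e^{\alpha|G_\Omega(\cdot,y)|}$ in $L^{\beta/\alpha}(\Omega)$ uniformly in $y\in\Omega$, and Vitali's theorem finishes.

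\textbf{What each buys.} Your version avoids the geometric bookkeeping. It also gives the stronger statement that $y\mapsto e^{\alpha|G_\Omega(\cdot,y)|}$ is continuous into $L^1(\Omega)$. Since the bound is uniform over all of $\Omega$, the same two lines also prove Lemma \ref{Jalpha at the boundary}, once $G_\Omega(x,y_n)\to0$ for $y_n\to\partial\Omega$ is granted, as the paper does. The paper's version gives an explicit modulus for the near-pole contribution and never leaves the exponent $\alpha$. Both ultimately rest on \eqref{Green omega to ball}; yours uses it through Theorem \ref{exp uniform G integrability}.

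\textbf{Caveat on your fallback for Step 1.} This concerns only the optional argument, not your main line. Interior $C^{2m-1}$ bounds and Arzel\`a--Ascoli on compact subsets of $\Omega\setminus\{y_0\}$ carry no boundary information. Testing only against $C_0^\infty(\Omega)$ determines the limit $w$ only up to a polyharmonic function. So the assertion that $w$ satisfies the Dirichlet conditions is not justified as written.

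To close it, pass to the limit in $\int_\Omega G_\Omega(x,y_k)(-\Delta)^m\varphi(x)\,\mathrm{d}x=\varphi(y_k)$ for every $\varphi\in C^{2m}_{\mathrm{cl}}(\overline{\Omega})$. This identity comes from Lemma \ref{dist property2} and symmetry. The passage to the limit uses $L^1(\Omega)$ convergence along the subsequence. That convergence follows from a.e.\ convergence together with the uniform exponential bound, which makes $G_\Omega(\cdot,y_k)$ uniformly integrable, exactly as in your Step 2. The uniqueness argument in Lemma \ref{solution2}, applied to $w-G_\Omega(\cdot,y_0)$, then gives $w=G_\Omega(\cdot,y_0)$.
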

\begin{proof} Fix $y_0 \in \Omega,$ and let $(y_n) \subseteq \Omega$ be a sequence such that $y_n \to y_0$. Our objective is to prove that $J_{\Omega,\alpha}(y_n)\to J_{\Omega,\alpha}(y_0).$ Let $0<r<d(y_0)/9$. We claim that there exists $n_0 \in \mathds{N}$ 
\begin{equation}\label{domain compatibility}
    n \geq n_0\quad \text{implies}\quad B(y_0,r) \subseteq B(y_n,d(y_n)/4).
\end{equation}
Indeed, since $y_n \to y_0$, we can find $n_0 \in \mathds{N}$ such that  $|y_n-y_0|<r$ for every $n \geq n_0$. Now let $x \in B(y_0,r)$, then we have for every $n \geq n_0$
\begin{equation*}
    |x-y_n|\leq |x-y_0|+|y_0-y_n|<2r.
\end{equation*}
Since for every $n \geq n_0$,
\begin{equation*}
    d(y_n) \geq d(y_0)-|y_0-y_n|>d(y_0)-r> 8r \quad \text{ and } \quad 2r<\frac{d(y_n)}{4},
\end{equation*}
the claim follows.

Next, write for $0<r<d(y_0)/9$
\begin{align*}
    |J_{\Omega,\alpha}(y_n)-J_{\Omega,\alpha}(y_0)|\leq&  \left |\int_{\Omega\setminus B(y_0,r)} e^{\alpha |G_{\Omega}(x,y_n)|}-e^{\alpha |G_{\Omega}(x,y_0)|}\,\mathrm{d}x\right|\\
    &+\int_{B(y_0,r)}e^{\alpha|G_{\Omega}(x,y_n)|}\,\mathrm{d}x+\int_{B(y_0,r)}e^{\alpha|G_{\Omega}(x,y_0)|}\,\mathrm{d}x\\
    & := J_{1,n}(r)+J_{2,n}(r)+J_{3,n}(r).
\end{align*}

We begin with the analysis of $J_{1,n}(r).$ Let $n_1 \in \mathds{N}$ be large enough such that $|y_n-y_0|<r/2$ for every $n \geq n_1$. If $x \in \Omega \setminus B(y_0,r)$ we see that $|x-y_0|\geq r$ and thus, for $n\geq n_1,$
\begin{equation*}
    |x-y_n|\geq |x-y_0|-|y_n-y_0|\geq r-\frac{r}{2}=\frac{r}{2}\quad \forall\, x \in \Omega\setminus B(y_0,r).
\end{equation*}
In this way, since the Green function $G_{\Omega}$ is continuous far away from the diagonal, we see that
\begin{equation}\label{Green pointwise}
    e^{\alpha |G_{\Omega}(x,y_n)|}\to e^{\alpha|G_{\Omega}(x,y_0)|}\quad \forall\,x \in \Omega\setminus B(y_0,r).
\end{equation}
Moreover, by \eqref{sharp Green function estimate}, there exists a positive constant $C(r)$ such that $|G_{\Omega}(x,y_n)|\leq C(r)$ for every $ x \in \Omega \setminus B(y_0,r)$ and $n \geq n_1.$ Thus, for $n \geq n_1$
\begin{equation}\label{Lebesgue boundness}
    e^{\alpha|G_{\Omega}(x,y_n)|}\leq e^{\alpha C(r)}\in L^1(\Omega \setminus B(y_0,r)).
\end{equation}
Next, from \eqref{Green pointwise} and \eqref{Lebesgue boundness}, we apply the Lebesgue dominated convergence theorem to conclude that
\begin{equation*}
    \int_{\Omega \setminus B(y_0,r)} e^{\alpha|G_{\Omega}(x,y_n)|}\,\mathrm{d}x\to \int_{\Omega \setminus B(y_0,r)} e^{\alpha|G_{\Omega}(x,y_0)|}\,\mathrm{d}x,
\end{equation*}
that is, $\lim_{n\to +\infty} J_{1,n}(r)=0$ for every $0<r<d(y_0)/9$. In fact, this argument works for every $r>0$ such that $B(y_0,r)\subset \subset \Omega$.

Let us now analyze $J_{2,n}.$ First, by \eqref{Green omega to ball}, after increasing $C$ if necessary, we have
\begin{equation*}
    |G_{\Omega}(x,y_n)|\leq C+c_m\log\frac{1}{|x-y_n|}\quad \forall\,x \in B(y_n,d(y_n)/4).
\end{equation*}
Moreover, for $n \geq n_0,$ \eqref{domain compatibility} holds, and thus, for every $n \geq n_0,$
\begin{equation*}
    |G_{\Omega}(x,y_n)|\leq C+c_m\log\frac{1}{|x-y_n|}\quad \forall\,x \in B(y_0,r).
\end{equation*}
Since $\alpha<\Lambda_m$, by
\eqref{eq:critical-constants}, we have
$\alpha c_m<\Lambda_m c_m=2m$; moreover,
$B(y_0,r)-y_n\subseteq B(0,2r)$. Therefore, for $n \geq n_0,$
\begin{align*}
    J_{2,n}(r) &\leq  C\int_{B(y_0,r)} \frac{1}{|x-y_n|^{\alpha c_m}}\,\mathrm{d}x
    \leq C \int_{B(0,2r)} \frac{1}{|z|^{\alpha c_m}}\,\mathrm{d}z
    = C \int_{0}^{2r} \delta^{2m-1-\alpha c_m}\,\mathrm{d}\delta
    = C r^{2m-\alpha c_m},
\end{align*}
where the constant $C>0$ above does not depend on $n \geq n_0$ and $r\in(0,d(y_0)/9)$.

Since $0<r<d(y_0)/9$, we clearly have $B(y_0,r)\subseteq B(y_0,d(y_0)/4)$ and consequently
\begin{equation*}
    |G_{\Omega}(x,y_0)|\leq C+c_m\log\frac{1}{|x-y_0|}\quad \forall\,x \in B(y_0,r).
\end{equation*}
Thus, proceeding as in the analysis of $J_{2,n}(r)$, we also obtain $J_{3,n}(r)\leq C r^{2m-\alpha c_m}$ where $C$ does not depend on $n\geq n_0$  and
$r\in(0,d(y_0)/9)$.

The conclusion now follows. Indeed, since
\(2m-\alpha c_m>0\) and the constants in the preceding estimates
are independent of \(n\) and \(r\), we may fix
\(0<\widetilde r<d(y_0)/9\) sufficiently small and choose
\(N_1\in\mathds{N}\) such that
\[
J_{2,n}(\widetilde r)+J_{3,n}(\widetilde r)
<
\frac{\varepsilon}{2}
\quad
\forall\,n\geq N_1.
\] Next, let $N_2 \in \mathds{N}$ such that $n \geq N_2$ implies $J_{1,n}(\tilde{r})<\varepsilon/2.$ Thus, considering $N=\max\{N_1,N_2\},$ we see that $n \geq N$ yields $|J_{\Omega,\alpha}(y_n)-J_{\Omega,\alpha}(y_0)|<\varepsilon$, concluding the proof.
\end{proof}
\begin{lemma}\label{Jalpha at the boundary}
Let \(\alpha\in(0,\Lambda_m)\). If $(y_n)\subseteq \Omega$ and \(y_n\to y_0\in\partial\Omega\) as $n \to + \infty$, then
\[
    \lim_{n \to + \infty} J_{\Omega,\alpha}(y_n)= |\Omega|.
\]
\end{lemma}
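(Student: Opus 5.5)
We show that $e^{\alpha|G_\Omega(\cdot,y_n)|}\to 1$ in $L^1(\Omega)$. Since $e^{t}\ge 1$ for $t\ge 0$, we have $J_{\Omega,\alpha}(y_n)\ge|\Omega|$, so it suffices to prove $\limsup_n J_{\Omega,\alpha}(y_n)\le|\Omega|$. Set $d_n:=d(y_n)\to0$ and split $\Omega=B(y_n,d_n/4)\cup(\Omega\setminus B(y_n,d_n/4))$, as in Theorem \ref{exp uniform G integrability}.

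\emph{The inner ball.} By \eqref{int exp near pole},
\[
\int_{B(y_n,d_n/4)}e^{\alpha|G_\Omega(x,y_n)|}\,\mathrm{d}x\le C\frac{d_n^{2m}}{\Lambda_m-\alpha}\longrightarrow0 .
\]

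\emph{The outer region.} By \eqref{faraway pole}, $|G_\Omega(x,y_n)|\le C$ on $\Omega\setminus B(y_n,d_n/4)$ uniformly in $n$. The integrands are therefore dominated by the constant $e^{\alpha C}$, and by dominated convergence it is enough to show that $G_\Omega(x,y_n)\to0$ for a.e.\ $x\in\Omega$. Fix $x\in\Omega$, $x\neq y_0$. Then $|x-y_n|\ge |x-y_0|/2$ for $n$ large, and the Mayboroda--Maz'ya bound \eqref{sharp Green function estimate} gives
\[
|G_\Omega(x,y_n)|\le C_\Omega\log\Bigl(1+\frac{\min\{d(x),d_n\}}{|x-y_n|}\Bigr)\le C_\Omega\log\Bigl(1+\frac{2d_n}{|x-y_0|}\Bigr)\longrightarrow0 .
\]
Hence $e^{\alpha|G_\Omega(x,y_n)|}\chi_{\Omega\setminus B(y_n,d_n/4)}(x)\to1$ for every $x\in\Omega$: the ball $B(y_n,d_n/4)$ lies within distance $d_n/4$ of $y_n$ and so eventually excludes each fixed $x$. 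Dominated convergence then yields
\[
\int_{\Omega\setminus B(y_n,d_n/4)}e^{\alpha|G_\Omega(x,y_n)|}\,\mathrm{d}x\longrightarrow|\Omega| .
\]

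Adding the two contributions gives $J_{\Omega,\alpha}(y_n)\to|\Omega|$. The only real point is the pointwise decay of $G_\Omega(x,y_n)$. This does not follow from the uniform bounds of Lemma \ref{lem:uniform-green-estimates}; it comes from the factor $\min\{d(x),d(y)\}$ in \eqref{sharp Green function estimate}, which captures the Dirichlet boundary behavior even though $G_\Omega$ may change sign.
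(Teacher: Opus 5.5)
Your proof is correct and follows essentially the same route as the paper: you split $\Omega$ into $B(y_n,d(y_n)/4)$ and its complement, bound the inner part by \eqref{int exp near pole}, and treat the outer part by dominated convergence with the majorant from \eqref{faraway pole}. The only difference is how you justify $G_\Omega(x,y_n)\to0$. The paper uses continuity of $G_\Omega$ off the diagonal up to the boundary together with $G_\Omega(x,y_0)=0$ for $y_0\in\partial\Omega$, whereas you obtain it directly from the factor $\min\{d(x),d(y)\}$ in \eqref{sharp Green function estimate}, which is slightly more self-contained.
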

\begin{proof} Let $y_0 \in \partial \Omega$ and $(y_n)\subseteq \Omega$ be a sequence such that $y_n \to y_0$. Write
$$
    J_{\Omega,\alpha}(y_n)=\int_{B(y_n,d(y_n)/4)} e^{\alpha|G_{\Omega}(x,y_n)|}\,\mathrm{d}x+\int_{\Omega \setminus B(y_n,d(y_n)/4)} e^{\alpha|G_{\Omega}(x,y_n)|}\,\mathrm{d}x
    :=I_n+L_n.
$$
We will prove that $I_n \to 0$ and $L_n \to |\Omega|$ as $n \to + \infty$.

Let us first analyze $I_n.$ By \eqref{int exp near pole} we obtain
\begin{equation*}
    I_n\leq
C\frac{d(y_n)^{2m}}{\Lambda_m-\alpha}\longrightarrow0\quad \text{as}\quad n \to+\infty,
\end{equation*}
indeed $d(y_n)\to 0$ as $n \to + \infty$, since $y_n\to y_0 \in \partial \Omega$ as $n \to + \infty$.

Now, in order to study $L_n,$ define
\(
    E_n \coloneqq \Omega \setminus B(y_n,d(y_n)/4)\).
Thus
\begin{equation*}
    L_n = \int_{\Omega} \chi_{E_n}(x)e^{\alpha |G_{\Omega}(x,y_n)|}\,\mathrm{d}x.
\end{equation*}
Next, let $x \in \Omega$ be fixed and observe that since $y_n \to y_0 \in \partial \Omega,$ we have $x \in E_n$ for all $n \in \mathds{N}$ sufficiently large, showing that $\chi_{E_n}(x) \to 1$ for every $x \in \Omega.$ Moreover, for each $x \in \Omega$, by the continuity of the Green function $G_{\Omega}$ for pairs of points away from the diagonal we see that $G_{\Omega}(x,y_n)\to G_{\Omega}(x,y_0)=0$, since $y_0 \in \partial \Omega.$ In this way
\(
    \chi_{E_n}(x)e^{\alpha |G_{\Omega}(x,y_n)|}\to 1\) in \(\Omega\).
Recall from \eqref{int exp faraway pole} that 
\begin{equation*}
    \chi_{E_n}(x)e^{\alpha |G_{\Omega}(x,y_n)|}\leq e^{\alpha C} \in L^1(\Omega).
\end{equation*}
Thus $L_n \to |\Omega|$ by the Lebesgue dominated convergence theorem.
\end{proof}
\begin{proposition}\label{Jalpha sup attained}
For every \(\alpha\in(0,\Lambda_m)\), there exists \(y_\alpha\in\Omega\) such that
\[
    J_{\Omega,\alpha}(y_\alpha)=\max_{y\in \Omega}J_{\Omega,\alpha}(y)>|\Omega|.
\]

Consequently, the set $K_{\Omega,\alpha}$ defined in
\eqref{intro-maximizer-set} is nonempty and compact.
\end{proposition}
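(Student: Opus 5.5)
The argument is a compactness one, using the continuity of $J_{\Omega,\alpha}$ (Lemma \ref{Jalpha continuous}) and its boundary behaviour (Lemma \ref{Jalpha at the boundary}). We first show that $J_{\Omega,\alpha}(y)>|\Omega|$ for every $y\in\Omega$; then we show that a maximizing sequence cannot escape to $\partial\Omega$.

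\emph{Strict lower bound.} Fix any $y\in\Omega$. Since $e^{t}\geq 1$ for $t\geq0$, we have $J_{\Omega,\alpha}(y)\geq|\Omega|$. For strictness we use \eqref{inferior green estimate}: on $B(y,r_y)\setminus\{y\}$ one has $G_\Omega(x,y)>0$, so $e^{\alpha|G_\Omega(x,y)|}>1$ on a set of positive measure. Hence
\[
J_{\Omega,\alpha}(y)>|\Omega| \qquad \text{for every } y\in\Omega .
\]
In particular $S_\Omega(\alpha)=\sup_\Omega J_{\Omega,\alpha}>|\Omega|$. By Theorem \ref{exp uniform G integrability} this supremum is finite.

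\emph{Attainment.} Take a maximizing sequence $(y_n)\subseteq\Omega$, so that $J_{\Omega,\alpha}(y_n)\to S_\Omega(\alpha)$. Since $\overline\Omega$ is compact, after passing to a subsequence $y_n\to\bar y\in\overline\Omega$. If $\bar y\in\partial\Omega$, Lemma \ref{Jalpha at the boundary} gives $J_{\Omega,\alpha}(y_n)\to|\Omega|<S_\Omega(\alpha)$, which is a contradiction. Therefore $\bar y\in\Omega$, and continuity (Lemma \ref{Jalpha continuous}) yields $J_{\Omega,\alpha}(\bar y)=S_\Omega(\alpha)$. We set $y_\alpha:=\bar y$.

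\emph{Compactness of $K_{\Omega,\alpha}$.} The set $K_{\Omega,\alpha}$ is nonempty by the previous step. It is closed relative to $\Omega$ because it is the preimage of a point under the continuous map $J_{\Omega,\alpha}$. To conclude that it is compact, it suffices to show that $K_{\Omega,\alpha}$ is closed in $\mathds R^{2m}$, i.e.\ that it has no accumulation points on $\partial\Omega$. This follows from the same argument: if $y_n\in K_{\Omega,\alpha}$ and $y_n\to y_0\in\partial\Omega$, Lemma \ref{Jalpha at the boundary} would force $S_\Omega(\alpha)=|\Omega|$, contradicting the strict inequality. Hence $K_{\Omega,\alpha}$ is closed and bounded in $\mathds R^{2m}$, so it is compact. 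Equivalently, $K_{\Omega,\alpha}$ stays at positive distance from $\partial\Omega$.

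The only delicate point is the strict inequality $S_\Omega(\alpha)>|\Omega|$, which is what rules out boundary concentration. It is handled by the local positivity estimate \eqref{inferior green estimate}. Everything else is routine compactness together with the two preceding lemmas.
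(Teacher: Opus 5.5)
Your proof is correct and follows the paper's argument: the strict bound $\sup_\Omega J_{\Omega,\alpha}>|\Omega|$ comes from the positivity of $G_\Omega$ near the pole, and Lemma \ref{Jalpha at the boundary} stops a maximizing sequence (or a sequence in $K_{\Omega,\alpha}$) from reaching $\partial\Omega$, after which continuity gives attainment and compactness. The only cosmetic difference is that the paper uses \eqref{eq:GreenFunctLarge} to get $G_\Omega\geq 1$ on a small ball around the pole, while you use the strict positivity in \eqref{inferior green estimate}; both work equally well.
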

\begin{proof} For each $\alpha \in (0,\Lambda_m)$, denote
\begin{equation*}
    M_{\alpha}\coloneq \sup_{y \in \Omega} J_{\Omega,\alpha}(y).
\end{equation*}
From \hyperref[exp uniform G integrability]{Theorem \ref{exp uniform G integrability}} we already know that $M_{\alpha}<+\infty.$ We first observe that $M_{\alpha}>|\Omega|$. Indeed, let $y \in \Omega$ be fixed and apply \eqref{eq:GreenFunctLarge} to find a small radius $\tilde r>0$ such that
\begin{equation*}
    G_{\Omega}(x,y)\geq 1\quad\forall\, x \in B(y,\tilde{r})\setminus \{y\}. 
\end{equation*}
In this way,
$$
    J_{\Omega,\alpha}(y)=\int_{B(y,\tilde{r})} e^{\alpha |G_{\Omega}(x,y)|}\,\mathrm{d}x+\int_{\Omega\setminus B(y,\tilde{r})} e^{\alpha |G_{\Omega }(x,y)|}\,\mathrm{d}x
    \geq e^{\alpha}|B(y,\tilde{r})|+|\Omega\setminus B(y,\tilde{r})|
    >|\Omega|,
$$
proving that $M_{\alpha}>|\Omega|.$

Next, consider a sequence $(y_n)\subseteq \Omega$ such that $J_{\Omega,\alpha}(y_n) \to M_{\alpha}.$ By the compactness of \(\overline\Omega\), after passing to a subsequence, we may assume that $y_n \to y_{\alpha} \in \overline{\Omega}.$ Suppose, for the sake of contradiction, that $y_{\alpha} \in \partial \Omega.$ Thus, by \hyperref[Jalpha at the boundary]{Lemma \ref{Jalpha at the boundary}} we have $J_{\Omega,\alpha}(y_n) \to |\Omega|$ and, by the uniqueness of limits, $|\Omega|=M_{\alpha},$ a contradiction. This proves that $y_{\alpha} \in \Omega$ and by \hyperref[Jalpha continuous]{Lemma \ref{Jalpha continuous}} we conclude that $J_{\Omega,\alpha}(y_n) \to J_{\Omega,\alpha}(y_{\alpha})$ and again, by the uniqueness of limits, we have $J_{\Omega,\alpha}(y_{\alpha})=M_{\alpha}$ and the proof is complete.

It remains to prove that $K_{\Omega,\alpha}$ is compact. Let
$(x_n)\subseteq K_{\Omega,\alpha}$. Again, by the compactness of \(\overline\Omega\), after passing to a subsequence, we may assume that
\(
x_n\longrightarrow x\in\overline{\Omega}\).
If $x\in\partial\Omega$, then \hyperref[Jalpha at the boundary]{Lemma \ref{Jalpha at the boundary}}
gives
\[
\lim_{n \to + \infty }J_{\Omega,\alpha}(x_n)=|\Omega|,
\]
which contradicts
\[
J_{\Omega,\alpha}(x_n)=\max_{y\in\Omega}J_{\Omega,\alpha}(y)>|\Omega|.
\]
Therefore, $x\in\Omega$. By \hyperref[Jalpha continuous]{Lemma \ref{Jalpha continuous}},
\[
J_{\Omega,\alpha}(x)
=
\lim_{n\to\infty}J_{\Omega,\alpha}(x_n)
=
\max_{y\in\Omega}J_{\Omega,\alpha}(y),
\]
and hence $x\in K_{\Omega,\alpha}$. Thus, $K_{\Omega,\alpha}$ is compact.
\end{proof}
\section{Existence and characterization of extremals for $S_{\Omega,\mathcal P}(\alpha)$} \label{section6}

In this section, we prove the characterization
of extremal measures stated in
\hyperref[extremal characterization]{Theorem \ref{extremal characterization}}. We first
consider general smooth bounded domains and then specialize the
analysis to the unit ball.

Let $\alpha \in (0, \Lambda_m)$. \hyperref[Jalpha sup attained]{Proposition \ref{Jalpha sup attained}} shows that
$J_{\Omega,\alpha}$ attains its maximum in $\Omega$, and ensures that the set
$K_{\Omega,\alpha}$ defined in \eqref{intro-maximizer-set} is nonempty and compact. It
remains to identify all
probability measures that attain $S_{\Omega,\mathcal P}(\alpha)$.

\begin{proof}[Proof of Theorem~\ref{extremal characterization}]
Set
\[
M_\alpha:=\max_{y\in\Omega}J_{\Omega,\alpha}(y).
\]
   By \hyperref[prop:equality-of-extremal-values]{Proposition \ref{prop:equality-of-extremal-values}} and \hyperref[Jalpha sup attained]{Proposition \ref{Jalpha sup attained}}, we already know that $S_{\Omega,\mathcal P}(\alpha)=M_{\alpha}.$ Moreover, applying the Jensen reduction
\eqref{Jensen upper bound}, with $\mu \in \mathcal P(\Omega)$ in place of $\mu_f$, we
obtain
\begin{equation}\label{extremal-chain}
\int_\Omega e^{\alpha|(G_\Omega\star\mu)(x)|}\,\mathrm{d}x
\leq
\int_\Omega J_{\Omega,\alpha}(y)\,\mathrm{d}\mu(y)
\leq M_\alpha.
\end{equation}
Now, let \(y_{\alpha}\in K_{\Omega,\alpha}\). Then
\(
(G_{\Omega}\star\delta_{y_{\alpha}})(x)
=
G_{\Omega}(x,y_{\alpha})\) for a.e. 
\(x\in\Omega\).
Consequently,
\[
\int_{\Omega}
e^{\alpha|(G_{\Omega}\star\delta_{y_{\alpha}})(x)|}
\,\mathrm{d}x
=
J_{\Omega,\alpha}(y_{\alpha})
=
M_{\alpha}
=
S_{\Omega,\mathcal P}(\alpha).
\]
Thus, every Dirac mass \(\delta_y\) with \(y\in K_{\Omega,\alpha}\) is
a maximizer for $S_{\Omega,\mathcal P}(\alpha)$.

Finally, we prove that the only extremal measures for $S_{\Omega,\mathcal P}(\alpha)$ are of the form $\delta_{y}$ with $y \in K_{\Omega,\alpha}$. Indeed, let $\mu \in \mathcal{P}(\Omega)$ be such that
\begin{equation*}
    \int_{\Omega} e^{\alpha|G_{\Omega}\star \mu|}\,\mathrm{d}x=M_{\alpha}.
\end{equation*}
Thus, from \eqref{extremal-chain} we see that
\begin{equation*}
    \int_{\Omega} \Bigl(M_{\alpha}-J_{\Omega,\alpha}(y)\Bigr)\,\mathrm{d}\mu(y)=0,
\end{equation*}
and since $M_{\alpha}-J_{\Omega,\alpha}(y)\geq 0$ in $\Omega$, we see that \(M_\alpha-J_{\Omega,\alpha}(y)=0\) for \(\mu\)-a.e. \(y\in\Omega\), that is, $\mu(\Omega \setminus K_{\Omega,\alpha})=0$, showing that $\mu(K_{\Omega,\alpha})=1$, because $\mu(\Omega)=1$.

We claim that there exists $y_0 \in K_{\Omega,\alpha}$ such that $B(y_0,r)\subset\subset \Omega$ and $\mu(B(y_0,r))>0$ for every $r>0$ sufficiently small. Indeed, if this is not the case, it follows that for any $y \in K_{\Omega,\alpha},$ there exists $r_y>0$ such that $B(y,r_y)\subset \subset \Omega$ and $\mu(B(y,r_y))=0.$ Thus, $\{B(y,r_y)\}_{y \in K_{\Omega,\alpha}}$ is an open cover of $K_{\Omega,\alpha}$ and, by the compactness of $K_{\Omega,\alpha},$ we can extract a finite open subcover $\{B(y_i,r_{y_i})\}_{i \in \{1,\dots,N\}}$ of $K_{\Omega,\alpha}.$ In this way,
\begin{equation*}
    \mu(K_{\Omega,\alpha})\leq \sum_{i=1}^{N} \mu(B(y_i,r_{y_i}))=0,
\end{equation*}
which contradicts the fact that $\mu(K_{\Omega,\alpha})=1$.

Next, we claim that there is only one point with this property. Indeed, suppose for the sake of contradiction that there exists another point $y_1 \in K_{\Omega,\alpha}$ such that $y_1 \neq y_0,$ $B(y_1,r)\subset \subset \Omega$ and $\mu(B(y_1,r))>0$ for every $r>0$ sufficiently small. Since \(\mu\) is extremal, equality holds at every step of
\eqref{Jensen upper bound} with $\mu$ in place of $\mu_f$. In particular, equality holds in Jensen's
inequality for a.e. \(x\in\Omega\). Therefore, there exists
a measurable set \(X\subseteq\Omega\), with
\(
|\Omega\setminus X|=0\),
such that, for every \(x\in X\),
\[
\exp\left(
\alpha\int_{\Omega}|G_{\Omega}(x,y)|\,\mathrm{d}\mu(y)
\right)
=
\int_{\Omega}
e^{\alpha|G_{\Omega}(x,y)|}\,\mathrm{d}\mu(y).
\]
For each \(x\in X\), we apply the equality case in Jensen's
inequality to the probability space \((\Omega,\mu)\), the strictly
convex function \(t\mapsto e^t\), and the function
\(
y\mapsto\alpha|G_{\Omega}(x,y)|\).
By \cite[Theorem~2.2]{Lieb}, there exists a constant
\(c(x)\geq0\) such that
\[
|G_{\Omega}(x,y)|=c(x)
\qquad
\text{for }\mu\text{-a.e. }y\in\Omega.
\] Fix \(x_0\in X\setminus\{y_0,y_1\}\). We observe that
\[
|G_{\Omega}(x_0,y_0)|
=
|G_{\Omega}(x_0,y_1)|
=
c(x_0).
\]
Indeed, if $|G_{\Omega}(x_0,y_0)|\neq c(x_0)$, we can find $\rho>0$ small enough such that $|G_{\Omega}(x_0,y)|\neq c(x_0)$ for every $y \in B(y_0,\rho)$, because $y_0 \neq x_0$ and $y \mapsto G_{\Omega}(x_0,y)$ is continuous when $y$ is far away from $x_0.$ However, $\mu(B(y_0,\rho))>0,$ which contradicts the fact that \(|G_\Omega(x_0,y)|=c(x_0)\) for \(\mu\)-a.e. \(y\in\Omega\). By the same argument we also have $|G_{\Omega}(x_0,y_1)|=c(x_0).$ Since \(x_0\in X\setminus\{y_0,y_1\}\) was arbitrary, we conclude that
\begin{equation}\label{eq:two-poles-equality-measures}
    |G_\Omega(x,y_0)|=|G_\Omega(x,y_1)|
\qquad\text{for a.e. }x\in\Omega.
\end{equation}
Since $y_0\neq y_1$, by
\eqref{Green pole separation} there exists $\rho>0$ such that
\[
|G_\Omega(x,y_1)|>|G_\Omega(x,y_0)|
\quad
\forall\,
x\in B(y_1,\rho)\setminus\{y_1\}.
\]
Since $|B(y_1,\rho)|>0$, this contradicts
\eqref{eq:two-poles-equality-measures}. Hence, $y_0$ is unique.

The final step is to prove $\mu= \delta_{y_0}$. Since $y_0 \in K_{\Omega,\alpha}$ is the unique point such that $\mu(B(y_0,r))>0$ for every $r>0$ small enough satisfying $B(y_0,r)\subset\subset \Omega,$ for every \(z\in K_{\Omega,\alpha}\setminus\{y_0\}\), we can find a radius \(r_z>0\) such that $\mu(B(z,r_z))=0.$ Thus, for every sufficiently small \(\varepsilon>0\) the set $K_{\Omega,\alpha}\setminus B(y_0,\varepsilon)$ is compact and is covered by $\{B(z,r_z)\}_{z \in K_{\Omega,\alpha}\setminus B(y_0,\varepsilon)}$. We may therefore extract a finite subcover and conclude that \(\mu(K_{\Omega,\alpha}\setminus B(y_0,\varepsilon))=0\). Since \(\mu(K_{\Omega,\alpha})=1\), it follows that \(\mu(K_{\Omega,\alpha}\cap B(y_0,\varepsilon))=1\), and hence \(\mu(B(y_0,\varepsilon))=1\). Now, we may choose
\(n_0\in\mathds{N}\) such that
\(
\mu(B(y_0,1/n))=1
\) for every \(n\geq n_0\).
By the continuity of the measure from above,
\[
\mu(\{y_0\})
=
\mu\left(
\bigcap_{n=n_0}^{\infty}B(y_0,1/n)
\right)
=
\lim_{n\to+\infty}\mu(B(y_0,1/n))
=
1.
\]
This is enough to conclude the result. Indeed, if $A\subseteq \Omega$ is a Borel subset of $\Omega$ such that $y_0 \in A$ then
$1=\mu(\{y_0\})\leq \mu(A)\leq 1$, that is, $\mu(A)=1$. If $y_0 \not \in A$, then $y_0 \in \Omega \setminus A$ and $\mu(\Omega \setminus A)=1$, that is, $\mu(A)=0.$ This is precisely the definition of $\delta_{y_0}$. The conclusion then follows.
\end{proof}

Next, we specialize the analysis to the unit ball. Boggio's explicit
formula allows us to prove directly that $J_{B,\alpha}(y)$ depends only
on $|y|$ and is strictly decreasing as $|y|$ increases. This proves
\hyperref[thm:intro-ball]{Theorem \ref{thm:intro-ball}}.
\begin{proof}[Proof of Theorem~\ref{thm:intro-ball}]
By the positivity of Boggio's Green function, we have
\[
|G_B(x,y)|=G_B(x,y)
\quad
\forall\,x,y\in B,\quad x\neq y.
\]
Thus, the absolute value may be omitted throughout the proof. Specializing \eqref{eq:Boggio-explicit} to $x_0=0$ and $R=1$,
the quantity $[XY]$ satisfies
\[
    [XY]^2
    =
    1-2\langle x,y\rangle+|x|^2|y|^2.
\]
A direct calculation shows that, for every $x\neq y$ in $B$,
\begin{equation}\label{rhoy justification}
    [XY]^2-|x-y|^2
    =
    (1-|x|^2)(1-|y|^2)>0.
\end{equation}
Thus, defining
\[
    \rho_y(x):=\frac{|x-y|}{[XY]},
\]
we have from \eqref{rhoy justification} that $0<\rho_y(x)<1$ for every $x \neq y$ in $B$. We also define
\[
    \Psi(s):=
    c_m\left[
    \log\left(\frac{1}{s}\right)
    +
    \sum_{k=1}^{m-1}
    \frac{(-1)^k}{2k}\binom{m-1}{k}
    (1-s^{2k})
    \right],
    \qquad 0<s<1.
\]
Then \eqref{eq:Boggio-explicit} gives
\[
    G_B(x,y)=\Psi(\rho_y(x)).
\]
It follows directly that \(\Psi\) is continuous in $(0,1)$, $\lim_{s \to 0^{+}}\Psi(s)=+\infty$  and  $\lim_{s\to 1^{-}}\Psi(s)=0$. Moreover, $\Psi$ is strictly decreasing in $(0,1).$ Indeed,
\begin{align*}
    \Psi'(s)&=c_m\left[-\frac{1}{s}-\sum_{k=1}^{m-1}(-1)^{k}\binom{m-1}{k}s^{2k-1}\right]
    =-\frac{c_m}{s}\left[\sum_{k=0}^{m-1}(-1)^{k}\binom{m-1}{k}s^{2k}\right]\\
    &=-\frac{c_m}{s}(1-s^{2})^{m-1}
    <0.
\end{align*}
Now, we define $H_{\alpha}:(0,1)\to (1,+\infty)$ by
\(
    H_{\alpha}(s)\coloneqq e^{\alpha \Psi(s)}\).
Then, from the properties of $\Psi$, we see that $H_{\alpha}$ is a continuous, strictly decreasing function, $\lim_{s\to0^{+}} H_{\alpha}(s)=+\infty$ and $\lim_{s \to 1^{-}}H_{\alpha}(s)=1.$ Thus, $H_{\alpha}$ is a strictly decreasing bijection.

We now prove that $J_{B,\alpha}$ is radial. We begin by calculating the measure of the level sets
\begin{equation*}
    E_{y,t}\coloneqq \{x \in B: \rho_y(x)<t\},\quad y \in B\quad \text{and}\quad t\in (0,1).
\end{equation*}
Observe that $\rho_y(x)<t$ is equivalent to $|x-y|^2<t^2[XY]^2$, which is equivalent to
\begin{equation*}
    |x|^2-2\langle x,y\rangle +|y|^2<t^2(1-2\langle x,y\rangle+|y|^2|x|^2),
\end{equation*}
that is,
\begin{equation}\label{level set}
    (1-t^2|y|^2)|x|^2-2(1-t^2)\langle x,y \rangle+(|y|^2-t^2)<0,
\end{equation}
and since $0<t<1$ and $0\leq |y| <1$, we have $1-t^2|y|^2>0$ and a direct computation shows that \eqref{level set} is equivalent to
\begin{equation*}
    \left|x-\frac{1-t^2}{1-t^2|y|^2}y\right|^2<\left(\frac{t(1-|y|^2)}{1-t^2|y|^2}\right)^2.
\end{equation*}
We also observe that the ball determined by the preceding inequality
is contained in \(B\). Indeed, we have,
$$
\left|
\frac{1-t^2}{1-t^2|y|^2}y
\right|
+
\frac{t(1-|y|^2)}{1-t^2|y|^2}
=
\frac{(1-t^2)|y|+t(1-|y|^2)}
     {1-t^2|y|^2}
=
1-
\frac{(1-|y|)(1-t)(1-|y|t)}
     {1-t^2|y|^2}
<1.
$$
Therefore, the ball is compactly contained in \(B\), and hence
\[
E_{y,t}
=
B\left(
\frac{1-t^2}{1-t^2|y|^2}y,
\frac{t(1-|y|^2)}{1-t^2|y|^2}
\right),
\]
showing that,
\begin{equation*}
    |E_{y,t}|=|B|\left(\frac{t(1-|y|^2)}{1-t^2|y|^2}\right)^{2m}.
\end{equation*}
Next, observe that
\begin{equation*}
    J_{B,\alpha}(y)=\int_{B} e^{\alpha G_B(x,y)}\,\mathrm{d}x=\int_{B} e^{\alpha\Psi(\rho_y(x))}\,\mathrm{d}x=\int_{B} H_{\alpha}(\rho_y(x))\,\mathrm{d}x.
\end{equation*}
Thus, by the layer-cake representation
\begin{equation*}
    J_{B,\alpha}(y)=|B|+\int_{1}^{\infty} |\{x \in B: H_{\alpha}(\rho_y(x))>\tau\}|\,\mathrm{d}\tau.
\end{equation*}
Recalling that $H_{\alpha}$ is bijective, we know that for each $\tau>1$, there exists $t_{\tau} \in (0,1)$ such that $H_{\alpha}(t_{\tau})=\tau$, and since $H_{\alpha}$ is also strictly decreasing, we see that $H_{\alpha}(\rho_y(x))>\tau$ if and only if $\rho_y(x)<t_\tau.$ These two observations yield
\(
    \{x \in B: H_{\alpha}(\rho_y(x))>\tau\}=E_{y,t_{\tau}}\).
Thus,
\begin{equation*}
    J_{B,\alpha}(y)=|B|+\int_{1}^{\infty} |E_{y,t_{\tau}}|\,\mathrm{d}\tau,
\end{equation*}
and since $|E_{y,t_{\tau}}|$ depends only on $|y|$, the same is true of $J_{B,\alpha}(y).$ This proves that there exists a function $F_{\alpha}:[0,1)\to\mathds{R}$ such that $J_{B,\alpha}(y)=F_{\alpha}(|y|)$ for every $y \in B.$

Fix $t \in (0,1)$. The radius of the sets $E_{y,t}$ when $|y|=r$ is given by
\begin{equation*}
    R_{r,t}=\frac{t(1-r^2)}{1-t^2r^2}.
\end{equation*}
We claim that the map $r \mapsto R_{r,t}$ is strictly decreasing in $(0,1).$ Indeed,
\begin{align*}
    \frac{\mathrm{d}}{\mathrm{d}r} \log R_{r,t}&=\frac{\mathrm{d}}{\mathrm{d}r}[\log t+\log(1-r^2)-\log(1-t^2r^2)]\\
    &=-\frac{2r}{1-r^2}+\frac{2rt^2}{1-t^2r^2}
    =-\frac{2r(1-t^2)}{(1-r^2)(1-t^2r^2)}
    <0.
\end{align*}
Hence, by the strict monotonicity of the function $\log$, the map $r\mapsto R_{r,t}$ is strictly decreasing on \([0,1)\). This proves the claim. Thus, if $0\leq r_1<r_2<1$ we have $R_{r_1,t}>R_{r_2,t}$ for every $t \in (0,1)$, implying that $|E_{r_{1}v,t}|>|E_{r_{2}v,t}|$ for every $t \in (0,1)$ and every unit vector $v.$ In particular, for every $\tau>1$ we also have $|E_{r_{1}v,t_{\tau}}|>|E_{r_{2}v,t_{\tau}}|$. In this way,
\begin{equation*}
    F_{\alpha}(r_1)=|B|+\int_{1}^{\infty}|E_{r_{1}v,t_{\tau}}|\,\mathrm{d}\tau>|B|+\int_{1}^{\infty}|E_{r_{2}v,t_{\tau}}|\,\mathrm{d}\tau=F_{\alpha}(r_2).
\end{equation*}

Finally, since
\(
J_{B,\alpha}(y)=F_{\alpha}(|y|)\)
for every
\(y\in B
\)
and \(F_{\alpha}\) is strictly decreasing, we have
\[
J_{B,\alpha}(y)<J_{B,\alpha}(0)
\quad
\forall\,y\in B\setminus\{0\}.
\]
Therefore,
\(
K_{B,\alpha}=\{0\}\),
and
\[
\max_{y\in B}J_{B,\alpha}(y)=J_{B,\alpha}(0).
\]

\hyperref[extremal characterization]{Theorem
\ref{extremal characterization}} then implies that $\delta_0$ is the
unique extremal measure in $\mathcal P(B)$.
\end{proof}
The preceding theorem reduces the computation of the optimal value in the ball to the evaluation of $J_{B,\alpha}$ at the origin. We now use this fact to prove \hyperref[sharp BM ball]{Corollary \ref{sharp BM ball}}.

\begin{proof}[Proof of Corollary~\ref{sharp BM ball}]
By \hyperref[prop:L1-value-and-nonattainment]{Proposition \ref{prop:L1-value-and-nonattainment}} and
\hyperref[thm:intro-ball]{Theorem \ref{thm:intro-ball}}, the optimal
value in the $L^1$-class is
\[
S_{B,L^1}(\alpha)=J_{B,\alpha}(0).
\]
Applying \eqref{Green especial} with $x_0=0$ and $R=1$,
and then using polar coordinates, we obtain
\begin{align}
J_{B,\alpha}(0)
&=
\int_B e^{\alpha G_B(x,0)}\,\mathrm{d}x \nonumber\\
&=
|\mathds S^{2m-1}|
\int_0^1
r^{2m-1-\alpha c_m}
\exp\left[
\alpha c_m
\sum_{k=1}^{m-1}
\frac{(-1)^k}{2k}
\binom{m-1}{k}
\left(1-r^{2k}\right)
\right]\,\mathrm{d}r. \label{eq:Fontana-Morpurgo}
\end{align}
This proves the stated inequality and shows that its right-hand side
is optimal. The sharpness of the range
$\alpha\in(0,\Lambda_m)$ follows from
\hyperref[main sharp BM]{Theorem \ref{main sharp BM}}.
\end{proof}

\begin{remark} \label{rmk:Fontana-Morpurgo}
When $m=1$, the sum in \eqref{eq:Fontana-Morpurgo} is zero (see \hyperref[case m=1 remark]{Remark \ref{case m=1 remark}}), $c_1=1/(2\pi)$, and therefore
\[
J_{B,\alpha}(0)
=
2\pi\int_0^1 r^{1-\alpha/(2\pi)}\,\mathrm{d}r
=
\frac{4\pi^2}{4\pi-\alpha}.
\]
Consequently, when $m=1$, the estimate of
\hyperref[sharp BM ball]{Corollary \ref{sharp BM ball}} coincides with
\eqref{eq:fontana-morpurgo-bound} for $\Omega=B$, thereby recovering
the sharp estimate of Fontana and Morpurgo
\cite[Corollary~2]{FontanaMorpurgo}.
\end{remark}

\section{Asymptotic concentration and the Robin function}\label{section4}
Having characterized the extremal measures for
$S_{\Omega,\mathcal P}(\alpha)$, with $\alpha\in(0,\Lambda_m)$, we finally
investigate the behavior of the optimal constant $S_\Omega(\alpha)$ and of the
maximizing set $K_{\Omega,\alpha}$ as $\alpha\uparrow\Lambda_m$. The analysis is based
on the asymptotic behavior of $J_{\Omega,\alpha}$ and on the Robin function
associated with $G_\Omega$. These ingredients will be used to determine
the critical growth of $S_\Omega(\alpha)$ and to study the limiting behavior
of the maximizing points $y_\alpha\in K_{\Omega,\alpha}$, which support the
extremal Dirac measures. 

\subsection{The Robin function and asymptotics of $J_{\Omega,\alpha}$}

We begin by deriving the estimates needed to describe the
asymptotic behavior of $J_{\Omega,\alpha}$ uniformly when $y$ ranges over
compact subsets of $\Omega$. We then complement this interior
analysis with a boundary estimate for the maximizing points of
$J_{\Omega,\alpha}$.

Let $R_\Omega$ be the Robin function  defined in \eqref{eq:robin-function}. From the
decomposition
\begin{equation*}
    G_\Omega(x,y)
    =
    c_m\log\frac{D}{|x-y|}
    -
    H_{\Omega,y}(x), \qquad x,y\in\Omega,\quad x\neq y,
\end{equation*}
and the smoothness of $H_{\Omega,y}$ in a neighborhood of $y$, the limit
defining $R_\Omega$ is well defined and
\begin{equation}\label{Robin simpler def}
    R_\Omega(y)=c_m\log D-H_{\Omega,y}(y), \qquad \text{ for every }y\in\Omega.
\end{equation}
To analyze the critical behavior of $J_{\Omega,\alpha}$, we first establish a
local estimate for $G_\Omega(x,y)$ near the diagonal that is uniform
with respect to $y$ on compact subsets of $\Omega$. This is the
purpose of the following lemma.

\begin{lemma}\label{Robin technical lemma}
Let $\Omega\subset\mathds R^{2m}$ be a smooth bounded domain. Then, for every compact set $K\subset \subset\Omega$, there exist $r_K,C_K>0$ such that $B(y,r_{K})\subset \subset \Omega$ and

\begin{equation*}
    |H_{\Omega,y}(y)-H_{\Omega,y}(x)|\leq C_K|x-y| \quad \text{ for every } y\in K \text{ and } x \in B(y,r_K/2).
\end{equation*}

 In particular,
\begin{equation} \label{robin tech app 1}
\left|
G_\Omega(x,y)
-
c_m\log\frac1{|x-y|}
-
R_\Omega(y)
\right|
\leq C_{K}|x-y| \quad \text{ for every } y\in K \text{ and } x \in B(y,r_K/2)\setminus \{y\}.
\end{equation}

\end{lemma}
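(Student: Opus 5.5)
The plan is to obtain a gradient bound $|\nabla_x H_{\Omega,y}(x)|\le C_K$, uniform for $y\in K$ and $x\in B(y,r_K/2)$, and then apply the mean value theorem along the segment $[y,x]$. We choose $r_K:=\tfrac12\operatorname{dist}(K,\partial\Omega)$, so that $\overline{B(y,2r_K)}\subset\Omega$ and $d(y)\ge 2r_K$ for every $y\in K$. The second claim \eqref{robin tech app 1} is then immediate. Indeed, by \eqref{Robin simpler def},
\[
G_\Omega(x,y)-c_m\log\frac1{|x-y|}-R_\Omega(y)=H_{\Omega,y}(y)-H_{\Omega,y}(x),
\]
since the terms $c_m\log D$ cancel.

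For the gradient bound, the key point is that $H_{\Omega,y}$ is polyharmonic in $x$ on all of $\Omega$ and is locally bounded uniformly in $y\in K$. I would argue as follows.

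\emph{Step 1 (uniform bound on an annulus).} For $x$ in the annulus $A_y:=B(y,r_K)\setminus B(y,r_K/2)$, write $H_{\Omega,y}(x)=c_m\log\frac{D}{|x-y|}-G_\Omega(x,y)$. The logarithmic term lies between $c_m\log(D/r_K)$ and $c_m\log(2D/r_K)$. For the Green term, \eqref{faraway pole} applies after a case split. If $r_K/2\ge d(y)/4$, then $A_y\subset\Omega\setminus B(y,d(y)/4)$ and \eqref{faraway pole} gives $|G_\Omega(x,y)|\le C$ directly. Otherwise we use \eqref{Green omega to ball} on the inner portion, where $\log\frac{d(y)/2}{|x-y|}\le\log\frac{D}{r_K/2}$, together with \eqref{faraway pole} on the remainder. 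In both cases $\|H_{\Omega,y}\|_{L^\infty(A_y)}\le C'_K$.

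\emph{Step 2 (interior propagation).} Since $H_{\Omega,y}$ is smooth and polyharmonic in $B(y,r_K)$, Lemma \ref{interior estimate BMOMEGA} gives $\|H_{\Omega,y}\|_{L^\infty(B(y,r_K))}\le C(m)C'_K$.

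\emph{Step 3 (derivative bound).} Apply the same local $W^{2m,p}$ estimates with Morrey embedding that were used in the proof of Lemma \ref{interior estimate BMOMEGA}, rescaled to balls of the fixed radius $r_K$, with $p=2m+1$. This gives
\[
\|\nabla H_{\Omega,y}\|_{L^\infty(B(y,3r_K/4))}\le C_K''\,\|H_{\Omega,y}\|_{L^1(B(y,r_K))}\le C_K.
\]
Because $r_K$ is fixed, the constant does not depend on $y\in K$. Alternatively, one could use the standard interior derivative estimates for polyharmonic functions.

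\emph{Step 4.} For $x\in B(y,r_K/2)$, the segment $[y,x]$ lies in $B(y,3r_K/4)$, and the mean value theorem yields the claimed Lipschitz bound.

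The main obstacle is Step 1. One must make sure that the bound on $H_{\Omega,y}$ away from the pole is genuinely uniform in $y\in K$, which rests on the Mayboroda--Maz'ya type estimates \eqref{faraway pole}--\eqref{Green omega to ball}. A cruder alternative would invoke joint continuity of $H$ on $K\times K'$ for a compact neighbourhood $K'$ of $K$, but that regularity is not established earlier in the paper. The approach through Lemma \ref{interior estimate BMOMEGA} avoids it.
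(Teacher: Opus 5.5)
Your argument is correct. Its second half matches the paper's: the interior $W^{2m,p}$ estimate with Morrey's embedding on balls of the fixed radius $r_K$, the mean value theorem along $[y,x]$, and the cancellation of $c_m\log D$ through \eqref{Robin simpler def}.

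The difference is how you get the uniform bound $\sup_{y\in K}\|H_{\Omega,y}\|_{L^\infty(B(y,r_K))}<+\infty$. The paper cites a further Mayboroda--Maz'ya estimate for the regular part, \cite[(3.9)]{Mazya}, namely $|H_{\Omega,y}(x)|\le C\log\bigl(1+D/\max\{d(x),d(y),|x-y|\}\bigr)$. This gives at once $|H_{\Omega,y}|\le C\log(1+D/d_K)$ on all of $\Omega$ for $y\in K$. You instead bound $H_{\Omega,y}(x)=c_m\log\frac{D}{|x-y|}-G_\Omega(x,y)$ only on the annulus $B(y,r_K)\setminus B(y,r_K/2)$, using the Green estimates already proved. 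You then propagate the bound inward with Lemma~\ref{interior estimate BMOMEGA}.

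Your route is more self-contained. It needs nothing beyond \eqref{sharp Green function estimate} and results proved in the paper, and it is the same mechanism used to prove Lemma~\ref{lem:uniform-green-estimates}. The paper's route is a one-line citation and yields a global bound on $H_{\Omega,y}$. Only the local bound is used later, in particular $|H_{\Omega,y}(y)|\le C$ for $y\in K$, and your argument provides it.

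Two minor points:
\begin{itemize}
\item The case split in your Step 1 is unnecessary. For $|x-y|\ge r_K/2$, the estimate \eqref{sharp Green function estimate} directly gives $|G_\Omega(x,y)|\le C_\Omega\log(1+2D/r_K)$.
\item With $r_K=\frac12\operatorname{dist}(K,\partial\Omega)$, the closed ball $\overline{B(y,2r_K)}$ may touch $\partial\Omega$, so your claim $\overline{B(y,2r_K)}\subset\Omega$ can fail. This is harmless: you only need $B(y,r_K)\subset\subset\Omega$, which does hold.
\end{itemize}
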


\begin{proof}Since $K \subset \subset \Omega$, there exists $d_{K}>0$ such that $d(y)\geq d_{K}$ for every $y \in K.$ Now, choose $0<r_{K}<d_{K}/4$ and observe that if $y \in K$ and $|x-y|<r_K$, we have 

$$d(x) \geq d(y)-|x-y|\geq d_K-r_K >\frac{3d_K}{4}.$$
Thus, \(B(y,r_K)\subset\subset\Omega\) for every \(y\in K\), with \(r_K\)
independent of \(y\).

Next, we apply the estimate \cite[(3.9)]{Mazya} due to Mayboroda and Maz'ya, with $n=2m$ and $i=j=0$, and obtain, 
\[
|H_{\Omega,y}(x)|
\leq
C\log\left(
1+\frac{D}
{\max\{d(x),d(y),|x-y|\}}
\right)
\quad
\forall\,x,y\in\Omega.
\]
Since \(d(y)\geq d_K\) for every \(y\in K\), we have
\(
\max\{d(x),d(y),|x-y|\}
\geq d(y)\geq d_K
\)
for every \(y\in K\) and \(x\in\Omega\). Therefore,
\[
|H_{\Omega,y}(x)|
\leq
C\log\left(1+\frac{D}{d_K}\right)
=:C_K
\qquad
\text{for every }y\in K
\text{ and }x\in\Omega.
\]
In particular,
\[
\|H_{\Omega,y}\|_{L^\infty(B(y,r_K))}
\leq C_K
\qquad
\text{for every }y\in K.
\]
In other words,
\begin{equation}\label{Robin property 1}
    \lVert H_{\Omega,y}\rVert_{L^{\infty}(B(y,r_K))}\leq C_{K}\quad \forall\,y \in K.
\end{equation}
Now, since $(-\Delta)^{m}H_{\Omega,y} = 0$ in $B(y,r_K)$, we can apply the interior estimate given on \cite[Theorem 2.20.]{Gazzola} with $p>2m$ to obtain
\begin{equation*}
    \lVert H_{\Omega,y} \rVert_{W^{2m,p}(B(y,r_K/2))}\leq C(r_K)\lVert H_{\Omega,y}\rVert_{L^1(B(y,r_{K}))},
\end{equation*}
and, from Morrey's inequality and \eqref{Robin property 1}, we get
\begin{equation*}
    \lVert \nabla H_{\Omega,y} \rVert_{L^{\infty}(B(y,r_K/2))}\leq C(r_K)\lVert H_{\Omega,y}\rVert_{L^1(B(y,r_{K}))}\leq C_K \quad \forall\,y \in K.
\end{equation*}
Finally, for each $x \in B(y,r_K/2)\setminus\{y\}$, with $y \in K$, consider the segment contained in $B(y,r_K/2)$ given by
\(
    \gamma(t)=y+t(x-y)\), \(0\leq t\leq 1\).
Then, from the fundamental theorem of calculus
\begin{equation*}
    |H_{\Omega,y}(x)-H_{\Omega,y}(y)|\leq \int_{0}^{1}|\nabla H_{\Omega,y}(y+t(x-y))||x-y|\,\mathrm{d}t\leq C_K|x-y|.
\end{equation*}

To complete the proof, observe that, for \(x\in B(y,r_K/2)\setminus\{y\}\), with $y \in K$, we have
$$
    G_\Omega(x,y)
-
c_m\log\frac1{|x-y|}
-
R_\Omega(y)
=H_{\Omega,y}(y)-H_{\Omega,y}(x).
$$
\end{proof}

The preceding estimate allows us to isolate, uniformly on compact
subsets of $\Omega$, the contribution of the logarithmic singularity
of $G_{\Omega}$ to $J_{\Omega,\alpha}$. We can now determine its precise
leading-order behavior as $\alpha\uparrow\Lambda_m$.

\begin{lemma}\label{Jalpha uniform critical behavior}
Let \(K\subset \subset\Omega\) be a compact set. Then
\[
    \lim_{\alpha\uparrow\Lambda_m}(\Lambda_m-\alpha)J_{\Omega,\alpha}(y)
    =
    2^{2m}\pi^{2m} e^{\Lambda_m R_\Omega(y)} \quad \text{ uniformly for } y\in K.
\]
\end{lemma}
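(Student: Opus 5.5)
Split $J_{\Omega,\alpha}(y)$ into a near-pole part and a far part, using the radius $r_K$ of Lemma~\ref{Robin technical lemma}. Fix $r\in(0,r_K/2)$, to be sent to zero only at the end, and write
\[
J_{\Omega,\alpha}(y)=\int_{B(y,r)}e^{\alpha|G_\Omega(x,y)|}\,\mathrm{d}x+\int_{\Omega\setminus B(y,r)}e^{\alpha|G_\Omega(x,y)|}\,\mathrm{d}x=:A_\alpha(y,r)+F_\alpha(y,r).
\]

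\textbf{The far part.} For $y\in K$ and $x\notin B(y,r)$, estimate \eqref{sharp Green function estimate} gives $|G_\Omega(x,y)|\le C_\Omega\log(1+D/r)$, uniformly in $y$. Hence
\[
F_\alpha(y,r)\le e^{\Lambda_m C_\Omega\log(1+D/r)}\,|\Omega|,
\]
and so $(\Lambda_m-\alpha)F_\alpha(y,r)\to0$ uniformly on $K$ for each fixed $r$.

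\textbf{The near part.} By \eqref{robin tech app 1}, for $y\in K$ and $0<|x-y|<r$,
\[
G_\Omega(x,y)=c_m\log\frac1{|x-y|}+R_\Omega(y)+E(x,y),\qquad |E(x,y)|\le C_K r.
\]
Shrinking $r$ if needed, the right-hand side is positive, since $R_\Omega$ is bounded on $K$ by \eqref{Robin simpler def} and the bound $|H_{\Omega,y}|\le C_K$. So the absolute value can be dropped. Since $\alpha<\Lambda_m$, this gives
\[
e^{-\Lambda_m C_K r}\,e^{\alpha R_\Omega(y)}\,I_\alpha(r)\le A_\alpha(y,r)\le e^{\Lambda_m C_K r}\,e^{\alpha R_\Omega(y)}\,I_\alpha(r),
\qquad
I_\alpha(r):=\int_{B_r}|z|^{-\alpha c_m}\,\mathrm{d}z .
\]
Computing in polar coordinates and using $2m-\alpha c_m=c_m(\Lambda_m-\alpha)$,
\[
I_\alpha(r)=|\mathds S^{2m-1}|\,\frac{r^{c_m(\Lambda_m-\alpha)}}{c_m(\Lambda_m-\alpha)} .
\]
Therefore $(\Lambda_m-\alpha)I_\alpha(r)\to|\mathds S^{2m-1}|/c_m$ as $\alpha\uparrow\Lambda_m$, for each fixed $r$.

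\textbf{Identifying the constant.} We have $|\mathds S^{2m-1}|=2\pi^m/(m-1)!$ and $1/c_m=2^{2m-1}\pi^m(m-1)!$, so
\[
\frac{|\mathds S^{2m-1}|}{c_m}=2^{2m}\pi^{2m},
\]
which is the constant in the statement. Finally, $e^{\alpha R_\Omega(y)}\to e^{\Lambda_m R_\Omega(y)}$ uniformly on $K$, because $|e^{\alpha R}-e^{\Lambda_m R}|\le(\Lambda_m-\alpha)\,\sup_K|R_\Omega|\,e^{\Lambda_m\sup_K|R_\Omega|}$.

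\textbf{Conclusion.} Combining the three estimates,
\[
\limsup_{\alpha\uparrow\Lambda_m}\ \sup_{y\in K}\left|(\Lambda_m-\alpha)J_{\Omega,\alpha}(y)-2^{2m}\pi^{2m}e^{\Lambda_m R_\Omega(y)}\right|\le C\left(e^{\Lambda_m C_K r}-1\right)
\]
for every small $r$. Letting $r\to0$ gives the uniform limit.

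The main difficulty is making the error uniform in $y$. This is exactly what the $y$-independent constants $r_K$ and $C_K$ of Lemma~\ref{Robin technical lemma} provide. We also need $R_\Omega$ to be bounded on $K$, which follows from the uniform bound on $H_{\Omega,y}$ established in that proof. The remaining care is to keep the order of limits right: first $\alpha\uparrow\Lambda_m$ with $r$ fixed, then $r\to0$.
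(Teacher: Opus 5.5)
Your proposal is correct and takes essentially the same route as the paper. You split at a small radius, use the Mayboroda--Maz'ya bound so the far part is killed by the factor $(\Lambda_m-\alpha)$, and squeeze the near part with the uniform Robin expansion of Lemma~\ref{Robin technical lemma} (after checking positivity so $|G_\Omega|=G_\Omega$) and the explicit radial integral. The only difference is presentational: you conclude with a $\limsup$ bound $C\bigl(e^{\Lambda_m C_K r}-1\bigr)$ and then let $r\to0$, whereas the paper fixes $\varepsilon$ first, chooses $\rho_\varepsilon$, and does explicit $\varepsilon/3$, $\varepsilon/6$ bookkeeping.
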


\begin{proof}
Let $C_K,r_K>0$ be as in
\hyperref[Robin technical lemma]{Lemma~\ref{Robin technical lemma}},
so that $B(y,r_K)\subset\subset\Omega$ for every $y\in K$ and
\eqref{robin tech app 1} holds. We know from
\eqref{Robin property 1} in the proof of
\hyperref[Robin technical lemma]{Lemma~\ref{Robin technical lemma}}
that $|H_{\Omega,y}(y)|\leq C$ for every $y\in K$. Since
\(
R_\Omega(y)=c_m\log D-H_{\Omega,y}(y)
\) for every $y \in \Omega$,
we have
\[
M_K:=\sup_{y\in K}|R_\Omega(y)|<+\infty.
\]

Choose $\rho_0\in(0,r_K/2)$ so small that
\[
c_m\log\frac{1}{\rho_0}-M_K-C_K\rho_0>0.
\]
Let $\varepsilon>0$ be given. We then choose
$\rho_\varepsilon\in(0,\rho_0]$ so small that
\begin{equation}\label{rho epsilon choice}
\frac{|\mathds{S}^{2m-1}|}{c_m}
e^{\Lambda_mM_K}
\left(
e^{\Lambda_mC_K\rho_\varepsilon}-1
\right)
<
\frac{\varepsilon}{3}.
\end{equation}

Fix $y \in K$ and set
\[
A(y):=
e^{\Lambda_mR_\Omega(y)}
\frac{|\mathds{S}^{2m-1}|}{c_m}.
\]

First, we compare $A(y)$ with the two quantities
\[
e^{\Lambda_m(R_\Omega(y)\pm C_K\rho_\varepsilon)}
\frac{|\mathds{S}^{2m-1}|}{c_m}.
\]
For the positive sign, we have
\begin{equation*}
e^{\Lambda_m(R_\Omega(y)+C_K\rho_\varepsilon)}
\frac{|\mathds{S}^{2m-1}|}{c_m}
-A(y)
=
A(y)\left(
e^{\Lambda_mC_K\rho_\varepsilon}-1
\right)
\leq
\frac{|\mathds{S}^{2m-1}|}{c_m}
e^{\Lambda_mM_K}
\left(
e^{\Lambda_mC_K\rho_\varepsilon}-1
\right)
<
\frac{\varepsilon}{3},
\end{equation*}
where the last inequality follows from
\eqref{rho epsilon choice}. For the negative sign, we similarly obtain
\begin{align*}
A(y)-
e^{\Lambda_m(R_\Omega(y)-C_K\rho_\varepsilon)}
\frac{|\mathds{S}^{2m-1}|}{c_m}
&=
A(y)\left(
1-e^{-\Lambda_mC_K\rho_\varepsilon}
\right)\\
&\leq
A(y)\left(
e^{\Lambda_mC_K\rho_\varepsilon}-1
\right)
\leq
\frac{|\mathds{S}^{2m-1}|}{c_m}
e^{\Lambda_mM_K}
\left(
e^{\Lambda_mC_K\rho_\varepsilon}-1
\right)
<
\frac{\varepsilon}{3},
\end{align*}
where we also used the elementary inequality
\(
1-e^{-t}\leq e^t-1\),
\( t\geq0\).
Consequently, for every $y\in K$,
\begin{equation}\label{rho epsilon error}
A(y)-\frac{\varepsilon}{3}
<
e^{\Lambda_m(R_\Omega(y)-C_K\rho_\varepsilon)}
\frac{|\mathds{S}^{2m-1}|}{c_m}
\quad \text{ and } \quad
e^{\Lambda_m(R_\Omega(y)+C_K\rho_\varepsilon)}
\frac{|\mathds{S}^{2m-1}|}{c_m}
<
A(y)+\frac{\varepsilon}{3}.
\end{equation}

It follows from \eqref{robin tech app 1} that
\begin{equation}\label{G decompsition with Robin}
G_\Omega(x,y)
=
c_m\log\frac{1}{|x-y|}
+R_\Omega(y)+E(x,y) \quad \text{ for every } y\in K \text{ and } x\in B(y,r_K/2)\setminus\{y\},
\end{equation}
where
\[
|E(x,y)|\leq C_K|x-y|  \quad \text{ for every } y\in K \text{ and } x\in B(y,r_K/2)\setminus\{y\}.
\]
In particular,
\begin{equation}\label{Exy bounded}
|E(x,y)|\leq C_K\rho_\varepsilon
\end{equation}
for every $y\in K$ and 
$x\in B(y,\rho_\varepsilon)\setminus\{y\}$. Therefore,
$$
G_\Omega(x,y)
\geq
c_m\log\frac{1}{|x-y|}
-M_K-C_K\rho_\varepsilon\geq
c_m\log\frac{1}{\rho_0}
-M_K-C_K\rho_0
>0
$$
for every $y\in K$ and 
$x\in B(y,\rho_\varepsilon)\setminus\{y\}$. Consequently, $\lvert G_\Omega(x,y)\rvert=G_\Omega(x,y)$ throughout this region and, for every $y \in K$, we can decompose 
\[
\begin{aligned}
J_{\Omega,\alpha}(y)
&=
\int_{B(y,\rho_\varepsilon)}
e^{\alpha G_\Omega(x,y)}\,\mathrm{d}x
+
\int_{\Omega\setminus B(y,\rho_\varepsilon)}
e^{\alpha|G_\Omega(x,y)|}\,\mathrm{d}x\\
&=:
I_\alpha(y,\rho_\varepsilon)
+
L_\alpha(y,\rho_\varepsilon).
\end{aligned}
\]

We first estimate $L_\alpha(y,\rho_\varepsilon)$. By
\eqref{sharp Green function estimate}, if $y \in K$ and
$x\in\Omega\setminus B(y,\rho_\varepsilon)$, then
\[
|G_\Omega(x,y)|
\leq
C_\Omega
\log\left(
1+\frac{\min\{d(x),d(y)\}}{|x-y|}
\right)
\leq
C_\Omega\log\left(1+\frac{D}{\rho_\varepsilon}\right).
\]
Hence
\[
0\leq
(\Lambda_m-\alpha)L_\alpha(y,\rho_\varepsilon)
\leq
(\Lambda_m-\alpha)Q_\varepsilon \qquad \text{for every } y \in K \text{ and } \alpha \in (0, \Lambda_m),
\]
where
\(
Q_\varepsilon
:=
|\Omega|
\left(1+\frac{D}{\rho_\varepsilon}\right)^{\Lambda_mC_\Omega}
\). Define
\(
\alpha_\varepsilon^{L}
:=
\Lambda_m-
\min\left\{
\frac{\Lambda_m}{2},
\frac{\varepsilon}{6Q_\varepsilon}
\right\}
\).
Then $\alpha_\varepsilon^{L}\in(0,\Lambda_m)$ and
\begin{equation}\label{Lalpha epsilon estimate}
0\leq
(\Lambda_m-\alpha)L_\alpha(y,\rho_\varepsilon)
<
\frac{\varepsilon}{6} \qquad \text{for every } y \in K \text{ and } \alpha \in(\alpha_\varepsilon^{L},\Lambda_m).
\end{equation}

We next estimate $I_\alpha(y,\rho_\varepsilon)$. Let $y \in K$ and $\alpha \in (0, \Lambda_m)$. By
\eqref{G decompsition with Robin}, and \eqref{Exy bounded},
\begin{equation}\label{Ialpha squeeze 1}
e^{\alpha(R_\Omega(y)-C_K\rho_\varepsilon)}
\int_{B(y,\rho_\varepsilon)}
|x-y|^{-\alpha c_m}\,\mathrm{d}x
\leq I_\alpha(y,\rho_\varepsilon)
\leq
e^{\alpha(R_\Omega(y)+C_K\rho_\varepsilon)}
\int_{B(y,\rho_\varepsilon)}
|x-y|^{-\alpha c_m}\,\mathrm{d}x.
\end{equation}
Since $\Lambda_mc_m=2m$, we can compute
\[
\int_{B(y,\rho_\varepsilon)}
|x-y|^{-\alpha c_m}\,\mathrm{d}x
=
|\mathds{S}^{2m-1}|
\int_0^{\rho_\varepsilon}
r^{2m-1-\alpha c_m}\,\mathrm{d}r
=
\frac{|\mathds{S}^{2m-1}|}{c_m}
\frac{
\rho_\varepsilon^{c_m(\Lambda_m-\alpha)}
}{
\Lambda_m-\alpha 
}.
\]
Consequently,
\begin{equation}\label{Ialpha squeeze 2}
e^{\alpha(R_\Omega(y)-C_K\rho_\varepsilon)}
\rho_\varepsilon^{c_m(\Lambda_m-\alpha)}
\frac{|\mathds{S}^{2m-1}|}{c_m}
\leq
(\Lambda_m-\alpha)I_\alpha(y,\rho_\varepsilon) \qquad \text{for every } y \in K \text{ and } \alpha \in (0, \Lambda_m)
\end{equation}
and
\begin{equation}\label{Ialpha squeeze 3}
(\Lambda_m-\alpha)I_\alpha(y,\rho_\varepsilon)
\leq
e^{\alpha(R_\Omega(y)+C_K\rho_\varepsilon)}
\rho_\varepsilon^{c_m(\Lambda_m-\alpha)}
\frac{|\mathds{S}^{2m-1}|}{c_m} \qquad \text{for every } y \in K \text{ and } \alpha \in (0, \Lambda_m).
\end{equation}
Since $R_\Omega$ is bounded on $K$,
the lower and upper bounds in \eqref{Ialpha squeeze 2} and
\eqref{Ialpha squeeze 3} converge, as
$\alpha\uparrow\Lambda_m$, uniformly with respect to $y\in K$. More precisely,
$$\lim_{\alpha \uparrow \Lambda_m} e^{\alpha(R_\Omega(y)\pm C_K\rho_\varepsilon)}
\rho_\varepsilon^{c_m(\Lambda_m-\alpha)}
\frac{|\mathds{S}^{2m-1}|}{c_m}= e^{\Lambda_m(R_\Omega(y)\pm C_K\rho_\varepsilon)}
\frac{|\mathds{S}^{2m-1}|}{c_m} \qquad \text{ uniformly for } y \in K.$$
Therefore, there exists
$\alpha_\varepsilon^{I}\in(0,\Lambda_m)$ such that
\begin{equation}\label{Ialpha squeeze 4}
e^{\Lambda_m(R_\Omega(y)-C_K\rho_\varepsilon)}
\frac{|\mathds{S}^{2m-1}|}{c_m}
-\frac{\varepsilon}{6}
\leq
(\Lambda_m-\alpha)I_\alpha(y,\rho_\varepsilon)
\qquad \text{for every } y \in K \text{ and } \alpha \in (\alpha_\varepsilon^{I}, \Lambda_m)
\end{equation}
and
\begin{equation}\label{Ialpha squeeze 5}
(\Lambda_m-\alpha)I_\alpha(y,\rho_\varepsilon)
\leq
e^{\Lambda_m(R_\Omega(y)+C_K\rho_\varepsilon)}
\frac{|\mathds{S}^{2m-1}|}{c_m}
+\frac{\varepsilon}{6} \qquad \text{for every } y \in K \text{ and } \alpha \in (\alpha_\varepsilon^{I}, \Lambda_m).
\end{equation}

Finally, set
\(
\alpha_\varepsilon
:=
\max\{
\alpha_\varepsilon^{I},
\alpha_\varepsilon^{L}
\}\). Combining
\eqref{Ialpha squeeze 4} with the nonnegativity of
$L_\alpha(y,\rho_\varepsilon)$ gives
\begin{equation}\label{Ialpha squeeze 6}
e^{\Lambda_m(R_\Omega(y)-C_K\rho_\varepsilon)}
\frac{|\mathds{S}^{2m-1}|}{c_m}
-\frac{\varepsilon}{6}
\leq
(\Lambda_m-\alpha)J_{\Omega,\alpha}(y) \qquad \text{for every } y \in K \text{ and } \alpha \in (\alpha_\varepsilon, \Lambda_m).
\end{equation}
On the other hand, combining \eqref{Ialpha squeeze 5} with
\eqref{Lalpha epsilon estimate}, we obtain
\begin{equation}\label{Ialpha squeeze 7}
(\Lambda_m-\alpha)J_{\Omega,\alpha}(y)
\leq
e^{\Lambda_m(R_\Omega(y)+C_K\rho_\varepsilon)}
\frac{|\mathds{S}^{2m-1}|}{c_m}
+\frac{\varepsilon}{3} \qquad \text{for every } y \in K \text{ and } \alpha \in (\alpha_\varepsilon, \Lambda_m).
\end{equation}
It now follows from \eqref{rho epsilon error} and
\eqref{Ialpha squeeze 6} that
\[
A(y)-(\Lambda_m-\alpha)J_{\Omega,\alpha}(y)
<
\frac{\varepsilon}{3}+\frac{\varepsilon}{6}
<\varepsilon \qquad \text{for every } y \in K \text{ and } \alpha \in (\alpha_\varepsilon, \Lambda_m).
\]
Similarly, \eqref{rho epsilon error} and
\eqref{Ialpha squeeze 7} imply
\[
(\Lambda_m-\alpha)J_{\Omega,\alpha}(y)-A(y)
<
\frac{\varepsilon}{3}+\frac{\varepsilon}{3}
<\varepsilon \qquad \text{for every } y \in K \text{ and } \alpha \in (\alpha_\varepsilon, \Lambda_m).
\]
Therefore,
\[
\sup_{y\in K}
\left|
(\Lambda_m-\alpha)J_{\Omega,\alpha}(y)
-
e^{\Lambda_mR_\Omega(y)}
\frac{|\mathds{S}^{2m-1}|}{c_m}
\right|
<\varepsilon \qquad \text{for every } \alpha \in (\alpha_\varepsilon, \Lambda_m).
\]

This proves that
\[
\lim_{\alpha\uparrow\Lambda_m}
(\Lambda_m-\alpha)J_{\Omega,\alpha}(y)
=
e^{\Lambda_mR_\Omega(y)}
\frac{|\mathds{S}^{2m-1}|}{c_m}
\quad\text{uniformly for }y\in K.
\]
Finally,
\[
\frac{|\mathds{S}^{2m-1}|}{c_m}
=
\frac{\frac{2\pi^m}{(m-1)!}}
{\frac{1}{2^{2m-1}\pi^m(m-1)!}}
=
2^{2m}\pi^{2m},
\]
which completes the proof.
\end{proof}
\begin{remark}\label{R is continuos} The following observation is standard and is included only for completeness. As a by-product of the preceding lemma, $R_\Omega\in C(\Omega)$. Indeed, let
$K\subset\subset\Omega$ be compact. By the continuity of $J_{\Omega,\alpha}$
established in \hyperref[section5]{Section \ref{section5}}, the functions
\(
    y\mapsto(\Lambda_m-\alpha)J_{\Omega,\alpha}(y)
\)
are continuous on $K$. Since they converge as $\alpha \uparrow \Lambda_m$ uniformly on $K$ to
\(
    y\mapsto
    2^{2m}\pi^{2m}e^{\Lambda_mR_\Omega(y)},
\)
the latter function is continuous on $K$. Applying the logarithm, we
conclude that $R_\Omega$ is continuous on $K$. Since $K\subset\subset\Omega$ is
arbitrary, it follows that $R_\Omega\in C(\Omega)$.
\end{remark}

The uniform convergence established above is available only on compact
subsets of $\Omega$. Since the maximizing points of $J_{\Omega,\alpha}$ depend
on $\alpha$, before applying this convergence to them we must show that
they remain in a fixed compact subset of $\Omega$ as
$\alpha\uparrow\Lambda_m$. For this purpose, we combine the interior asymptotic estimate of \hyperref[Jalpha uniform critical behavior]{Lemma \ref{Jalpha uniform critical behavior}}
with the boundary estimate established in \eqref{Jalpha and distance to the boundary} to show that
the maximizing points cannot approach $\partial\Omega$ as
$\alpha\uparrow\Lambda_m$.

\begin{proposition}\label{Kalpha points boundary}
There exist $\delta>0$ and $\alpha_0\in(0,\Lambda_m)$ such that $d(y)\geq\delta$ for every $y\in K_{\Omega,\alpha}$ and every $\alpha\in(\alpha_0,\Lambda_m)$.
\end{proposition}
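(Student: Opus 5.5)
The plan is to compare a lower bound for the maximum value, valid at one fixed interior point, with the upper bound \eqref{Jalpha and distance to the boundary}, which becomes small near the boundary. Both quantities will be multiplied by $(\Lambda_m-\alpha)$, so that they can be compared on the same scale as $\alpha\uparrow\Lambda_m$.

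\emph{Lower bound.} Fix any point $y_\ast\in\Omega$ and apply Lemma~\ref{Jalpha uniform critical behavior} with $K=\{y_\ast\}$. This gives
\[
\lim_{\alpha\uparrow\Lambda_m}(\Lambda_m-\alpha)J_{\Omega,\alpha}(y_\ast)=2^{2m}\pi^{2m}e^{\Lambda_mR_\Omega(y_\ast)}=:2L>0 .
\]
Hence there is $\alpha_1\in(0,\Lambda_m)$ such that $(\Lambda_m-\alpha)J_{\Omega,\alpha}(y_\ast)\geq L$ for all $\alpha\in(\alpha_1,\Lambda_m)$. If $y\in K_{\Omega,\alpha}$, then $J_{\Omega,\alpha}(y)\geq J_{\Omega,\alpha}(y_\ast)$, and therefore
\[
(\Lambda_m-\alpha)J_{\Omega,\alpha}(y)\geq L .
\]

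\emph{Upper bound.} By \eqref{Jalpha and distance to the boundary}, whose constant $C=C(m,\Omega)$ does not depend on $\alpha$ or $y$, every $y\in\Omega$ satisfies
\[
(\Lambda_m-\alpha)J_{\Omega,\alpha}(y)\leq C(\Lambda_m-\alpha)+C\,d(y)^{2m}.
\]

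\emph{Conclusion.} Choose $\alpha_0\geq\alpha_1$ so close to $\Lambda_m$ that $C(\Lambda_m-\alpha)\leq L/2$ for all $\alpha\in(\alpha_0,\Lambda_m)$. Combining the two bounds, every $y\in K_{\Omega,\alpha}$ satisfies $C\,d(y)^{2m}\geq L/2$. This gives $d(y)\geq\delta:=(L/(2C))^{1/(2m)}$, as claimed.

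I do not expect a real obstacle. The only point to check is that the constant in \eqref{Jalpha and distance to the boundary} is genuinely uniform in both $\alpha$ and $y$, and Theorem~\ref{exp uniform G integrability} states exactly that. The critical factor $(\Lambda_m-\alpha)^{-1}$ appears in that estimate only together with $d(y)^{2m}$, and this is what makes the comparison work.
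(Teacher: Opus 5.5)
Your proposal is correct and follows the paper's proof essentially step by step. You obtain the lower bound $(\Lambda_m-\alpha)J_{\Omega,\alpha}(y)\geq L$ at maximizers by applying Lemma~\ref{Jalpha uniform critical behavior} at a single fixed interior point, and combine it with the $\alpha$- and $y$-uniform upper bound \eqref{Jalpha and distance to the boundary} to get $d(y)\geq (L/(2C))^{1/(2m)}$, which is exactly how the paper argues (with $A$ in place of your $L$).
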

\begin{proof}Fix $y_0\in\Omega$. Since $\{y_0\}\subset \subset \Omega$, by \hyperref[Jalpha uniform critical behavior]{Lemma \ref{Jalpha uniform critical behavior}}, we have
\begin{equation*}
    \lim_{\alpha \uparrow \Lambda_m}(\Lambda_m-\alpha)J_{\Omega,\alpha}(y_0)=2^{2m}\pi^{2m}e^{\Lambda_mR_\Omega(y_0)}.
\end{equation*}
Observe that $2^{2m}\pi^{2m}e^{\Lambda_m R_\Omega(y_0)}>0$. Therefore, there exist $\alpha_1 \in (0,\Lambda_m)$ and $A>0$ such that 
\begin{equation*}
    (\Lambda_m-\alpha)J_{\Omega,\alpha}(y_0)\geq A \qquad \text{ for every } \alpha \in (\alpha_1,\Lambda_m).
\end{equation*}

Let $y_{\alpha}\in K_{\Omega,\alpha}$ with $\alpha \in (0,\Lambda_m)$. It follows that $(\Lambda_m-\alpha)J_{\Omega,\alpha}(y_{\alpha})\geq A$ for every $\alpha \in (\alpha_1,\Lambda_m).$ Now, by \eqref{Jalpha and distance to the boundary}, we also have
\begin{equation*}
    (\Lambda_m-\alpha)J_{\Omega,\alpha}(y_{\alpha})\leq C(\Lambda_m-\alpha)+Cd(y_{\alpha})^{2m} \qquad \text{ for every } \alpha \in (0,\Lambda_m).
\end{equation*}
Hence,
\begin{equation*}
    A\leq C(\Lambda_m-\alpha)+Cd(y_{\alpha})^{2m} \qquad \text{ for every } \alpha \in (\alpha_1,\Lambda_m).
\end{equation*}
Next, choose $\alpha_0 \in (\alpha_1,\Lambda_m)$ such that  $C(\Lambda_m-\alpha)\leq A/2$ for every $\alpha \in (\alpha_0,\Lambda_m)$. In this way
\begin{equation*}
     d(y_{\alpha})\geq \left(\frac{A}{2C}\right)^{\frac{1}{2m}}>0 \qquad \text{ for every } \alpha \in (\alpha_0,\Lambda_m).
\end{equation*}
Therefore, taking
\[
\delta
:=
\left(\frac{A}{2C}\right)^{1/(2m)},
\]
we obtain
\[
d(y_{\alpha})\geq\delta
\qquad
\text{ for every } \alpha\in(\alpha_0,\Lambda_m).
\]
Since $y_{\alpha}\in K_{\Omega,\alpha}$ was chosen arbitrarily and the
constants $\alpha_0$ and $\delta$ do not depend on this choice, the
estimate holds for every $y\in K_{\Omega,\alpha}$.
\end{proof}
\begin{remark}\label{Robin has max} For completeness, we record that the Robin function attains its
maximum in $\Omega$. Indeed, by \eqref{Green omega to ball}, after increasing $C$ if necessary,
\begin{equation*}
    G_{\Omega}(x,y)\leq C+c_m\log\frac{d(y)}{|x-y|}\qquad \text{ for every } y \in \Omega \text{ and $x \in B(y,d(y)/4)\setminus\{y\}$}.
\end{equation*}
Subtracting $c_m\log(1/|x-y|)$ from both sides and letting $x\to y$, we obtain
\begin{equation*}
    R_\Omega(y)\leq C+c_m\log d(y) \qquad \text{ for every } y \in \Omega.
\end{equation*}
Thus, $R_\Omega(y)\to-\infty$ as $d(y)\to0$. Now, fix $y_0\in\Omega$. Since $R_\Omega(y_0)<+\infty$, we may choose $\delta\in(0,d(y_0))$ such that $d(y)<\delta$ implies $R_\Omega(y)<R_\Omega(y_0)-1$. Consequently, $\sup_{\Omega}R_\Omega=\sup_{\{d(y)\geq\delta\}}R_\Omega$. The set $\{y\in\Omega:d(y)\geq\delta\}$ is compactly contained in $\Omega$, and $R_\Omega$ is continuous on this set by \hyperref[R is continuos]{Remark \ref{R is continuos}}. Therefore, the latter supremum is attained, and hence $R_\Omega$ attains its maximum in $\Omega$.
\end{remark}
\subsection{Asymptotic selection by the Robin function}

\hyperref[Jalpha uniform critical behavior]{Lemma \ref{Jalpha uniform critical behavior}} gives the
asymptotic behavior of $J_{\Omega,\alpha}$ as $\alpha \uparrow \Lambda_m$ uniformly on compact subsets of
$\Omega$, while
\hyperref[Kalpha points boundary]{Proposition \ref{Kalpha points boundary}} shows
that, for $\alpha$ sufficiently close to $\Lambda_m$, all the
maximizing points $y\in K_{\Omega,\alpha}$ belong to a fixed compact subset of
$\Omega$. Consequently, the asymptotic formula given in \hyperref[Jalpha uniform critical behavior]{Lemma \ref{Jalpha uniform critical behavior}} can be applied uniformly
at these maximizing points. This allows us to determine the
leading-order behavior of $S_\Omega(\alpha)$ and to identify the possible
limits of the points supporting extremal Dirac measures as
$\alpha\uparrow\Lambda_m$.

\begin{proof}[Proof of Theorem~\ref{intro-critical}] By \hyperref[Robin has max]{Remark \ref{Robin has max}}, we may choose
$y^{*}\in\Omega$ such that
\[
R_\Omega(y^{*})=\max_{y\in\Omega}R_\Omega(y).
\]
Moreover, by \hyperref[extremal characterization]{Theorem \ref{extremal characterization}},
\[
S_\Omega(\alpha)=\max_{y\in\Omega}J_{\Omega,\alpha}(y)\quad \forall\, \alpha \in (0,\Lambda_m).
\] Thus, for every $\alpha \in (0,\Lambda_m)$, we have $(\Lambda_m-\alpha)J_{\Omega,\alpha}(y^{\ast})\leq(\Lambda_m-\alpha) S_\Omega(\alpha).$ Now, given $\varepsilon>0$, by \hyperref[Jalpha uniform critical behavior]{Lemma \ref{Jalpha uniform critical behavior}} we can find $\alpha_1 \in (0,\Lambda_m)$ such that
\begin{equation*}
    \left |(\Lambda_m-\alpha)J_{\Omega,\alpha}(y^{\ast})-2^{2m}\pi^{2m}e^{\Lambda_mR_\Omega(y^{\ast})} \right|<\varepsilon, \quad \forall\, \alpha \in (\alpha_1,\Lambda_m).
\end{equation*}
Consequently,
\begin{equation}\label{Salpha asymptotic 1}
   2^{2m}\pi^{2m}e^{\Lambda_m R_\Omega(y^{\ast})}-\varepsilon<(\Lambda_m-\alpha)S_\Omega(\alpha)\quad \forall\,\alpha \in (\alpha_1,\Lambda_m). 
\end{equation}

Next, we estimate $(\Lambda_m-\alpha)S_\Omega(\alpha)$ from above. For each $\alpha \in (0,\Lambda_m)$, choose $y_{\alpha} \in K_{\Omega,\alpha}.$ By \hyperref[Kalpha points boundary]{Proposition \ref{Kalpha points boundary}}, there exist $\alpha_0\in (0,\Lambda_m)$ and $\delta>0$ such that $d(y_{\alpha})\geq\delta$ for every $\alpha \in (\alpha_0,\Lambda_m).$ Next, define the set $\Omega_{\delta}\coloneqq \{y \in \Omega:d(y) \geq \delta\}$ and observe that  $\Omega_{\delta}\subset\subset \Omega$ is compact and $y_{\alpha} \in \Omega_{\delta}$ for every $\alpha \in (\alpha_0,\Lambda_m).$ We now apply \hyperref[Jalpha uniform critical behavior]{Lemma \ref{Jalpha uniform critical behavior}} with  $K=\Omega_{\delta}$ and find $\alpha_2 \in (\alpha_0,\Lambda_m)$, such that
\begin{equation*}
    \left |(\Lambda_m-\alpha)J_{\Omega,\alpha}(y_{\alpha})-2^{2m}\pi^{2m}e^{\Lambda_mR_\Omega(y_{\alpha})} \right|<\varepsilon, \quad \forall\, \alpha \in (\alpha_2,\Lambda_m).
\end{equation*}
Recalling that $J_{\Omega,\alpha}(y_{\alpha})=S_\Omega({\alpha})$, we obtain
\begin{equation}\label{Salpha asymptotic 2}
 (\Lambda_m-\alpha) S_\Omega(\alpha)<2^{2m}\pi^{2m}e^{\Lambda_m\max_{\Omega}R_\Omega}+\varepsilon\quad \forall\, \alpha \in (\alpha_2,\Lambda_m).
\end{equation}
Choosing $\alpha_3\in
\left(\max\{\alpha_1,\alpha_2\},\Lambda_m\right)$ and recalling that $R_\Omega(y^{\ast})=\max_{\Omega} R_\Omega$, we have by \eqref{Salpha asymptotic 1} and \eqref{Salpha asymptotic 2}
\begin{equation*}
    2^{2m}\pi^{2m}e^{\Lambda_m \max_{\Omega}R_\Omega}-\varepsilon<(\Lambda_m-\alpha)S_\Omega(\alpha)<2^{2m}\pi^{2m}e^{\Lambda_m\max_{\Omega}R_\Omega}+\varepsilon\quad \forall \alpha \in (\alpha_3,\Lambda_m).
\end{equation*}
In other words,
\begin{equation*}
    \left |(\Lambda_m-\alpha)S_\Omega(\alpha)-2^{2m}\pi^{2m}e^{\Lambda_m\max_{\Omega}R_\Omega}\right|<\varepsilon,\quad \alpha \in (\alpha_3,\Lambda_m),
\end{equation*}
This proves the first assertion. We now turn to the asymptotic selection of the maximizing points.

Let $\mathcal{M}(\Omega)$ denote the space of finite Radon measures
on $\Omega$. We recall that
\(
\mu_k\rightharpoonup^{*}\mu
\) in \(\mathcal{M}(\Omega)\)
if and only if
\[
\lim_{k \to + \infty}\int_{\Omega}\varphi\,\mathrm{d}\mu_k
=
\int_{\Omega}\varphi\,\mathrm{d}\mu
\qquad
\text{ for every }\varphi\in C_0(\Omega),
\]
where
\(
C_0(\Omega)
:=
\left\{
\varphi\in C(\Omega):
\varphi \text{ admits a continuous extension }
\overline{\varphi}\in C(\overline{\Omega})
\text{ such that }
\overline{\varphi}=0 \text{ on }\partial\Omega
\right\}\). Equivalently, $C_0(\Omega)$ is the closure of the space $ C_c(\Omega)\coloneq\left\{
\varphi\in C(\Omega):
\mathrm{supp}\,\varphi\subset\subset\Omega
\right\}$ with respect to the $L^\infty(\Omega)$-norm.

Let $(\alpha_k)\subset(0,\Lambda_m)$ satisfy
$\alpha_k\uparrow\Lambda_m$ as $k \to + \infty$, and choose
$y_{\alpha_k}\in K_{\Omega,\alpha_k}$ for every $k\in\mathds{N}$.
 By \hyperref[extremal characterization]{Theorem \ref{extremal characterization}} we can write $S_\Omega(\alpha_k)=J_{\Omega,\alpha_k}(y_{\alpha_k})$ for every $k \in \mathds{N}$. 
As in the first part of the proof, by \hyperref[Kalpha points boundary]{Proposition \ref{Kalpha points boundary}}, there exist $\alpha_0 \in (0, \Lambda_m)$, $\delta>0$, and  $k_0\in\mathds N$ such that  $d(y_{\alpha})\geq\delta$ for every $\alpha \in (\alpha_0,\Lambda_m)$, $\alpha_k\in(\alpha_0,\Lambda_m)$ for every $k\geq k_0$,
and $y_{\alpha_k}\in \Omega_\delta \coloneqq \{y \in \Omega:d(y) \geq \delta\}$ for every $k\geq k_0$. Since
$\Omega_\delta\subset\subset\Omega$ is compact, up to a subsequence,
\(
    y_{\alpha_k}\rightarrow y^\ast\in \Omega_\delta\). 
Moreover, by the first assertion, we have  
    \begin{equation}\label{comparation 1}
        \lim_{k\to +\infty}(\Lambda_m-\alpha_k)J_{\Omega,\alpha_k}(y_{\alpha_k})=2^{2m}\pi^{2m}e^{\Lambda_m\max_{\Omega} R_\Omega}.
    \end{equation}
We can also apply \hyperref[Jalpha uniform critical behavior]{Lemma \ref{Jalpha uniform critical behavior}} on the compact set $\Omega_{\delta}\subset\subset \Omega$ and obtain in particular
\begin{equation}\label{comparation 2}
    \lim_{k \to+\infty}\left((\Lambda_m-\alpha_k)J_{\Omega,\alpha_k}(y_{\alpha_k})- 2^{2m}\pi^{2m}e^{\Lambda_m R_\Omega(y_{\alpha_k})} \right)= 0.
\end{equation}
Indeed, as $k \to +\infty$, we have 
\begin{equation*}
\left|(\Lambda_m-\alpha_k)J_{\Omega,\alpha_k}(y_{\alpha_k})- 2^{2m}\pi^{2m}e^{\Lambda_m R_\Omega(y_{\alpha_k})} \right|\leq \sup_{y \in \Omega_{\delta}}\left |(\Lambda_m-\alpha_k)J_{\Omega,\alpha_k}(y)- 2^{2m}\pi^{2m}e^{\Lambda_m R_\Omega(y)}\right|\to 0.
\end{equation*}
Thus, from \eqref{comparation 1} and \eqref{comparation 2}, we obtain
\(
    e^{\Lambda_m R_\Omega(y_{\alpha_k})}\to  e^{\Lambda_m \max_{\Omega} R_\Omega}\) as \(k\to+\infty\),
implying that $R_\Omega(y_{\alpha_k})\to \max_{\Omega} R_\Omega$. Now,
since $y_{\alpha_k} \to y^{\ast}$ and $R_\Omega$ is continuous on $\Omega_{\delta}$ (see \hyperref[R is continuos]{Remark \ref{R is continuos}}), we see that $R_\Omega(y_{\alpha_k}) \to R_\Omega(y^{\ast})$ and by the uniqueness of the limit, we conclude that $R_\Omega(y^{\ast})=\max_{\Omega} R_\Omega.$

Next, let $\varphi\in C_0(\Omega)$. Since
$y_{\alpha_k}\to y^{*}$, we have
\[
\lim_{k \to + \infty}\int_{\Omega}\varphi\,\mathrm{d}\delta_{y_{\alpha_k}}
=
\lim_{k \to + \infty}\varphi(y_{\alpha_k})
=
\varphi(y^{*})
=
\int_{\Omega}\varphi\,\mathrm{d}\delta_{y^{*}}.
\]
Therefore,
\(
\delta_{y_{\alpha_k}}
\rightharpoonup^{*}
\delta_{y^{*}}\)
 in \(\mathcal{M}(\Omega)\).

Finally, suppose that $R_\Omega$ has a unique maximum point $y^{*}$.
Suppose by contradiction that the whole sequence
$(y_{\alpha_k})$ does not converge to $y^{*}$. Then there exist
$\varepsilon_0>0$ and a subsequence $(y_{\alpha_{k_\ell}})$ such that
\begin{equation} \label{eq:ContrAsympt}
|y_{\alpha_{k_\ell}}-y^{*}|
\geq\varepsilon_0
\quad
\forall\,\ell\in\mathds{N}.
\end{equation}
Applying the first part of the proof to this subsequence, we can
extract a further subsequence converging to some
$\widetilde{y}\in\Omega$ satisfying
\(
R_\Omega(\widetilde{y})
=
\max_{y\in\Omega}R_\Omega(y)\). By the uniqueness of the maximum point, $\widetilde{y}=y^{*}$ and this contradicts \eqref{eq:ContrAsympt}. Therefore,
\(
y_{\alpha_k}\longrightarrow y^{*}
\)
for the whole sequence. Consequently,
\(
\delta_{y_{\alpha_k}}
\rightharpoonup^{*}
\delta_{y^{*}}
\) in \(\mathcal{M}(\Omega)\).
\end{proof}

\begin{remark} \label{Flucher}
When $m=1$, the first asymptotic formula in \hyperref[intro-critical]{Theorem \ref{intro-critical}}
reduces to
\[
\lim_{\alpha\uparrow4\pi}
(4\pi-\alpha)J_{\Omega,\alpha}(y)
=
4\pi^2e^{4\pi R_\Omega(y)}.
\]
For comparison, we recall that Flucher in \cite{Flucher} considered, for each
$y\in\Omega$, the concentration class
\[
C_y:=\left\{(u_k)\subset H_0^1(\Omega):
\|\nabla u_k\|_{L^2(\Omega)}\leq1\ \forall\,k,\quad
|\nabla u_k|^2\,\mathrm{d}x\rightharpoonup^\ast\delta_y\right\}.
\]
For the critical Trudinger--Moser functional, he proved that
\[
\sup_{(u_k)\in C_y}
\limsup_{k\to\infty}
\int_\Omega\bigl(e^{4\pi u_k^2}-1\bigr)\,\mathrm{d}x
=
\pi e\,r_\Omega(y)^2,
\]
see \cite[Lemma~4 and Theorem~13]{Flucher}.
Here the conformal radius $r_{\Omega}$ is defined by
\[
r_\Omega(y)
=
e^{-2\pi (H_{\Omega,y}(y)-c_m\log D)}
\]
where for each fixed $x \in \Omega$, the map $y\in \Omega\mapsto H_{\Omega,x}(y)$ is the regular part of the Green function $G_{\Omega}$ defined in \eqref{auxiliary}, see 
\cite[Section~4.2]{Flucher}. Comparing this decomposition
with \eqref{eq:robin-function} and \eqref{Robin simpler def}, we obtain
\[
r_\Omega(y)^2=e^{4\pi R_\Omega(y)}.
\]
Thus, up to universal multiplicative constants, both limiting
formulas depend on the concentration point through $e^{4\pi R_\Omega(y)}$;
hence the largest concentration levels correspond precisely to the
maximum points of $R_\Omega$.
\end{remark}

\subsection*{Statements and Declarations}

\noindent\textbf{Funding.}
This study was financed in part by the Coordenação de Aperfeiçoamento de Pessoal de Nível Superior - Brasil (CAPES) - Finance Code 001, and by the MATH-AmSud grant 88887.878894/2023-00. 
J. M. do \'O acknowledges partial support from CNPq
through grants 312435/2026-6, 409764/2023-0, and 443594/2023-6 and from the
Para\'iba State Research Foundation (FAPESQ), grant no. 3034/2021. 
F. Sani was partially supported by INdAM-GNAMPA.
V. R. Silva acknowledges partial support from CAPES through Grants 88887.824231/2023-00 and 88881.980555/2024-01.

\medskip

\noindent\textbf{Conflict of interest.}
The authors declare that they have no conflict of interest.

\medskip

\noindent\textbf{Authors' contributions.}
All authors contributed to the conception of the work, the development of the
mathematical arguments, and the writing and revision of the manuscript. All
authors read and approved the final manuscript.

\medskip

\noindent\textbf{Data availability.}
Data sharing does not apply to this article, as no datasets were generated or
analyzed during the current study.

\bigskip


\bibliographystyle{plainnat}
\bibliography{References}

@article{BM,
 author = {Brezis, Ha{\"{\i}}m and Merle, Frank},
 title = {Uniform estimates and blow-up behavior for solutions of {{\(-\Delta{} u ={V}(x) e^ u\)}} in two dimensions},
 fjournal = {Communications in Partial Differential Equations},
 journal = {Commun. Partial Differ. Equations},
 volume = {16},
 number = {8-9},
 pages = {1223--1253},
 year = {1991},
 language = {English},
 zbMATH = {28018},
 Zbl = {0746.35006}
}

@book{Gazzola,
 author = {Gazzola, Filippo and Grunau, Hans-Christoph and Sweers, Guido},
 title = {Polyharmonic boundary value problems. {Positivity} preserving and nonlinear higher order elliptic equations in bounded domains},
 fseries = {Lecture Notes in Mathematics},
 series = {Lect. Notes Math.},
 volume = {1991},
 year = {2010},
 publisher = {Berlin: Springer},
 language = {English},
 zbMATH = {5712793},
 Zbl = {1239.35002}
}

@article{Boggioclassical,
 author = {Boggio, T.},
 title = {Sulle funzioni di {G}reen d'ordine {{\(m\)}}.},
 fjournal = {Rendiconti del Circolo Matematico di Palermo},
 journal = {Rend. Circ. Mat. Palermo},
 volume = {20},
 pages = {97--135},
 year = {1905},
 language = {Italian},
 zbMATH = {2651738},
 JFM = {36.0827.01}
}

@Book{Lieb,
 Author = {Lieb, Elliott H. and Loss, Michael},
 Title = {Analysis},
 Edition = {2nd},
 FSeries = {Graduate Studies in Mathematics},
 Series = {Grad. Stud. Math.},
 Volume = {14},
 Year = {2001},
 Publisher = {Providence, RI: American Mathematical Society (AMS)},
 Language = {English},
 zbMATH = {1601796},
 Zbl = {0966.26002}
}

@article{ExistenceGreen,
 author = {Krasovskij, J. P.},
 title = {Isolation of singularities of the {Green}'s function},
 fjournal = {Mathematics of the USSR. Izvestiya},
 journal = {Math. USSR, Izv.},
 volume = {1},
 pages = {935--966},
 year = {1969},
 language = {English},
 zbMATH = {3277880},
 Zbl = {0174.15703}
}

@book{BrezisFA,
 author = {Brezis, Haim},
 title = {Functional analysis, {Sobolev} spaces and partial differential equations},
 fseries = {Universitext},
 series = {Universitext},
 year = {2011},
 publisher = {New York, NY: Springer},
 language = {English},
 zbMATH = {5633610},
 Zbl = {1220.46002}
}

@incollection{Mazya,
 author = {Mayboroda, Svitlana and Maz'ya, Vladimir},
 title = {Pointwise estimates for the polyharmonic {Green} function in general domains},
 booktitle = {Analysis, partial differential equations and applications. The Vladimir Maz'ya anniversary volume. Selected papers of the international workshop, Rome, Italy, June 30--July 3, 2008},
 pages = {143--158},
 year = {2009},
 publisher = {Basel: Birkh{\"a}user},
 language = {English},
 zbMATH = {5635475},
 Zbl = {1193.35032}
}

@book{FollandPDE,
 author = {Folland, Gerald B.},
 title = {Introduction to partial differential equations.},
 edition = {2nd},
 year = {1995},
 publisher = {Princeton, NJ: Princeton University Press},
 language = {English},
 zbMATH = {826154},
 Zbl = {0841.35001}
}

@article{FontanaMorpurgo,
 author = {Fontana, Luigi and Morpurgo, Carlo},
 title = {Optimal limiting embeddings for {{\(\Delta\)}}-reduced {Sobolev} spaces in {{\(L^1\)}}},
 fjournal = {Annales de l'Institut Henri Poincar{\'e}. Analyse Non Lin{\'e}aire},
 journal = {Ann. Inst. Henri Poincar{\'e}, Anal. Non Lin{\'e}aire},
 volume = {31},
 number = {2},
 pages = {217--230},
 year = {2014},
 language = {English},
 zbMATH = {6347272},
 Zbl = {1316.46035}
}

@article{Flucher,
 author = {Flucher, Martin},
 title = {Extremal functions for the {Trudinger}--{Moser} inequality in 2 dimensions},
 fjournal = {Commentarii Mathematici Helvetici},
 journal = {Comment. Math. Helv.},
 volume = {67},
 number = {3},
 pages = {471--497},
 year = {1992},
 language = {English},
 zbMATH = {86164},
 Zbl = {0763.58008}
}

@article{CassaniRufTarsi,
  author  = {Cassani, Daniele and Ruf, Bernhard and Tarsi, Cristina},
  title   = {Best constants in a borderline case of second-order
             {M}oser type inequalities},
  journal = {Ann. Inst. Henri Poincar{\'e}, Anal. Non Lin{\'e}aire},
  volume  = {27},
  number  = {1},
  pages   = {73--93},
  year    = {2010}
}

@article{MartinazziBMNavier,
 author = {Martinazzi, Luca},
 title = {Classification of solutions to the higher order {Liouville}'s equation on {{\({\mathbb{R}^{2m}}\)}}},
 fjournal = {Mathematische Zeitschrift},
 journal = {Math. Z.},
 volume = {263},
 number = {2},
 pages = {307--329},
 year = {2009},
 language = {English},
 zbMATH = {5603857},
 Zbl = {1172.53026}
}

@article{Lin1998,
  author       = {Lin, Chang-Shou},
  title        = {A classification of solutions of a conformally invariant fourth order equation in {R}$^4$},
  journal      = {Comment. Math. Helv.},
  volume       = {73},
  year         = {1998},
  number       = {2},
  pages        = {206--231}
}

@article{PreviousPaper,
  author = {do {\'O}, Jo{\~a}o Marcos and
            Sani, Federica and Silva, Victor Rafael},
  title = {A {Brezis--Merle} inequality for the biharmonic
           operator with {Dirichlet} boundary conditions
           in four-dimensional balls},
  journal = {arXiv preprint arXiv:2610.27212},
  year = {2026},
  eprint = {2610.27212},
  archivePrefix = {arXiv},
  primaryClass = {math.AP},
  url = {https://arxiv.org/abs/2610.27212}
}

@article{Alberico,
 author = {Alberico, Angela and Ferone, Vincenzo},
 title = {Regularity properties of solutions of elliptic equations in {{\(\mathbb{R}^ 2\)}} in limit cases},
 fjournal = {Atti della Accademia Nazionale dei Lincei. Classe di Scienze Fisiche, Matematiche e Naturali. Serie IX. Rendiconti Lincei. Matematica e Applicazioni},
 journal = {Atti Accad. Naz. Lincei, Cl. Sci. Fis. Mat. Nat., IX. Ser., Rend. Lincei, Mat. Appl.},
 volume = {6},
 number = {4},
 pages = {237--250},
 year = {1995},
 language = {English},
 zbMATH = {915283},
 Zbl = {0860.35015}
}

\end{document}